 \newtheorem{Theorem}{Theorem}[section]
 \newtheorem{Corollary}[Theorem]{Corollary}
 \newtheorem{Lemma}[Theorem]{Lemma}
 \newtheorem{Proposition}[Theorem]{Proposition}
 \newtheorem{Remark}[Theorem]{Remark}
 \numberwithin{equation}{section}
\begin{document}

\title[strong openness conjecture]
 {Strong openness conjecture and related problems for plurisubharmonic functions}

\author{Qi'an Guan}
\address{Qi'an Guan: School of Mathematical Sciences, and Beijing International Center for Mathematical Research,
Peking University, Beijing, 100871, China.}
\email{guanqian@amss.ac.cn}
\author{Xiangyu Zhou}
\address{Xiangyu Zhou: Institute of Mathematics, AMSS, and Hua Loo-Keng Key Laboratory of Mathematics, Chinese Academy of Sciences, Beijing, China}
\email{xyzhou@math.ac.cn}

\thanks{The authors were partially supported by NSFC}

\subjclass{}

\keywords{$L^2$ extension theorem, strong openness conjecture,
plurisubharmonic function, multiplier ideal sheaf, complex
singularity exponent}

\date{}

\dedicatory{}

\commby{}


\begin{abstract}
In this article, we solve the strong
openness conjecture on the multiplier ideal sheaves for the
plurisubharmonic functions posed by Demailly.
We prove two conjectures about the growth of the volumes of the
sublevel sets of plurisubharmonic functions related to the complex singularity
exponents and quasi-plurisubharmonic functions related to
the jumping numbers,
which were posed by
Demailly-Koll\'{a}r and
Jonsson-Mustat\u{a} respectively.
We give a new proof
of a lower semicontinuity conjecture posed by
Demailly-Koll\'{a}r without using the ACC
conjecture.
Other applications by combining with well-known results
are also mentioned.
\end{abstract}

\maketitle

\section{Introduction}

Plurisubharmonic functions have been fundamental in several complex variables
and complex geometry since they were introduced by Oka and Lelong in 1940's.
For a nice survey on the theory of psh functions, the reader is referred to \cite{kisel}
by Kiselman. The philosophy behind the Levi problem, $L^2$ method for solving $\bar\partial$
equation and vanishing theorems on multiplier ideal sheaves is that construction
of a specific holomorphic function or section could be reduced to construction of a
speci¡¥c psh function. Singularities of the psh functions play an important role in
such a construction. In the present paper, we discuss the properties related to the
singularities of the psh functions.

\subsection{Outline of the main results and organizations}
$\\$

In this article, we establish a strong openness property for
plurisubharmonic functions.
We obtain estimates
about the growth of the volumes of the sublevel sets of plurisubharmonic
functions and quasi-plurisubharmonic
functions.
We also obtain a lower semicontinuity property of
plurisubharmonic functions.

We establish a strong openness property on multiplier ideal sheaves for plurisub-
harmonic functions. We obtain estimates about the growth of the volumes of the
sublevel sets of plurisubharmonic functions and quasi-plurisubharmonic functions.
We also obtain a lower semicontinuity property of plurisubharmonic functions.
The paper is organized as follows. In the rest of this section, we present our
main theorems and their corollaries, among others, solutions of the strong openness
conjecture posed by Demailly and two related conjectures posed by Demailly-Koll\'{a}r
and Jonsson-Mustat\u{a}. 
In Section 2, we recall or give some preliminary lemmas used
in the proofs of the main theorems. 
In Section 3, we introduce three propositions
used in the proofs of the main theorems. 
In Section 4, we give the proofs of the
propositions stated in the last Section. 
In Section 5, we give the detailed proofs of
the main theorems.

\subsection{Strong openness conjecture}\label{sub:strong}
$\\$

Let $X$ be complex manifold with dimension $n$ and $\varphi$ be a
plurisubharmonic function on $X$. Following Nadel \cite{Nadel90},
one can define the multiplier ideal sheaf $\mathcal{I}(\varphi)$ to
be the sheaf of germs of holomorphic functions $f$ such that
$|f|^{2}e^{-\varphi}$ is locally integrable (see also \cite{siu05},
\cite{siu09}, \cite{demailly2010}, etc.). Let
$$\mathcal{I}_{+}(\varphi):=\cup_{\varepsilon>0}\mathcal{I}((1+\varepsilon)\varphi).$$

In \cite{berndtsson13},
Berndtsson gave a proof of the openness conjecture of Demailly and Koll\'{a}r in \cite{D-K01}:

\emph{\textbf{Openness conjecture:} Let $\varphi$ be a plurisubharmonic function on $X$.
Assuming that $\mathcal{I}(\varphi)=\mathcal{O}_{X}$.
Then $$\mathcal{I}_{+}(\varphi)=\mathcal{I}(\varphi).$$}

The dimension two case of the Openness conjecture was proved by Favre and Jonsson in \cite{FM05j} (see also \cite{FM05v}).

In the present article, we discuss more general conjecture-the strong openness conjecture
about multiplier ideal sheaves for plurisubharmonic functions which was posed by
Demailly in
\cite{demailly-note2000} and \cite{demailly2010} (see also \cite{DEL00}, \cite{D-P03}, \cite{BFM08},
\cite{Leh11}, \cite{LiPhD}, \cite{JM12}, \cite{Gue12}, \cite{Cao12},
\cite{JM13}, \cite{Li2013}, \cite{Mat2013}, etc. ):

\emph{\textbf{Strong openness conjecture:}  Let $\varphi$ be a plurisubharmonic function on $X$.
Then $$\mathcal{I}_{+}(\varphi)=\mathcal{I}(\varphi).$$}

For $dim X \leq 2$, the strong openness conjecture was
proved in \cite{JM12} by studying the asymptotic jumping numbers for
graded sequences of ideals.

It is not hard to see that the truth of
the strong openness conjecture is equivalent to the following theorem:

\begin{Theorem}
\label{t:strong1015}\cite{GZopen-a}
Let $\varphi$ be a negative plurisubharmonic function on the unit polydisc $\Delta^{n}\subset\mathbb{C}^{n}$,
suppose $F$ is a holomorphic function on $\Delta^{n}$,
which satisfies
$$\int_{\Delta^{n}}|F|^{2}e^{-\varphi}d\lambda_{n}<+\infty,$$
where $d\lambda_{n}$
is the Lebesgue measure on $\mathbb{C}^{n}$.
Then there
exists a number $p>1$, such that
$$\int_{\Delta^{n}_{r}}|F|^{2}e^{-p\varphi}d\lambda_{n}<+\infty,$$
where $r\in(0,1)$.
\end{Theorem}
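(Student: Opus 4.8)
The plan is to reduce the statement to a sharp, uniform $L^{2}$-extension estimate along the sublevel sets of $\varphi$ itself and then to propagate integrability by an elementary recursion; the analytic heart is the optimal $L^{2}$-extension theorem of \cite{GZopen-a}, repackaged in the three propositions of Section~3. First the harmless reductions. If $F\equiv 0$ there is nothing to prove; if $\varphi$ is bounded below on a neighbourhood of $\overline{\Delta^{n}_{r}}$ then $e^{-p\varphi}$ is bounded there for every $p$, so again there is nothing to prove. Hence assume $F\not\equiv 0$ and $\inf\varphi=-\infty$, and, after adding a constant to $\varphi$ and shrinking $\Delta^{n}$ slightly, normalise $\int_{\Delta^{n}}|F|^{2}e^{-\varphi}\,d\lambda_{n}=1$. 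It then suffices to prove that the sublevel masses
$$
M(t):=\int_{\{\varphi<-t\}\cap\Delta^{n}_{r}}|F|^{2}e^{-\varphi}\,d\lambda_{n}
$$
decay geometrically, $M(t)\le M(0)\,\rho^{\,t}$ for some $\rho\in(0,1)$: cutting $\{\varphi<0\}\cap\Delta^{n}_{r}$ into the shells $\{-k-1\le\varphi<-k\}$, on which $e^{-\varepsilon\varphi}\le e^{\varepsilon(k+1)}$, one gets
$$
\int_{\Delta^{n}_{r}}|F|^{2}e^{-(1+\varepsilon)\varphi}\,d\lambda_{n}\ \le\ \sum_{k\ge 0}e^{\varepsilon(k+1)}\big(M(k)-M(k+1)\big),
$$
and this series converges once $e^{\varepsilon}\rho<1$, which is the theorem with $p=1+\varepsilon$. (For simple model weights such as $\varphi=\log\|z\|$ one checks directly that $M(t)$ decays exponentially, so geometric decay is exactly the expected behaviour.)

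For the geometric decay, fix $r<r'<1$. For each $t>0$ I would invoke the optimal $L^{2}$-extension theorem of \cite{GZopen-a}, applied together with a carefully chosen family of auxiliary weights built from convex increasing functions of $\varphi$ (kept plurisubharmonic, localised near the level $\{\varphi=-t\}$, with controlled derivatives) and the one-dimensional weight that renders the extension constant sharp — precisely the package assembled in the three propositions of Section~3. This should produce a holomorphic function $F_{t}$ on $\Delta^{n}_{r'}$ which converges to $F$ locally uniformly on $\Delta^{n}_{r'}$ as $t\to\infty$ and which satisfies a uniform weighted $L^{2}$ inequality whose right-hand side is controlled by $C$ times the mass $\int_{\{-t-1\le\varphi<-t\}}|F|^{2}e^{-\varphi}\,d\lambda_{n}$ in a single thin shell, with $C$ depending only on $n,r,r'$ and \emph{not} on $t$. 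The indispensable point is the optimality of the constant in \cite{GZopen-a}: with the classical Ohsawa--Takegoshi constant one would lose a fixed factor at every level, and these losses would compound as $t\to\infty$ and wreck the recursion below.

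Closing the loop, Section~5 would unwind this inequality — using $M(t)\to0$ (a consequence of $\int_{\Delta^{n}}|F|^{2}e^{-\varphi}<\infty$) and the triangle inequality to compare $F_{t}$ with $F$ on $\{\varphi<-t\}$ — into a differential inequality of the form $M(t)\le -C\,M'(t)$ in the distributional sense, equivalently a geometric recursion $M(k+1)\le\theta\,M(k)$ with $\theta\in(0,1)$ at integer arguments; this yields $M(t)\le M(0)\,e^{-t/C}$, the geometric decay required above. Alternatively one extracts a weak-$L^{2}$ limit of a subsequence of $(F_{t})_{t}$, identifies it with $F$, and reads the improved integrability off the uniform estimate via Fatou's lemma. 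The one genuinely hard ingredient is the input from \cite{GZopen-a}: the sharp-constant $L^{2}$-extension theorem together with the delicate design of the auxiliary weights that keeps the constant in the displayed inequality independent of $t$; once that is granted, the remainder is bookkeeping. It is worth stressing that the real difficulty is the presence of an arbitrary holomorphic $F$, equivalently an arbitrary multiplier ideal rather than just the unit ideal: the case $F\equiv 1$, $\mathcal{I}(\varphi)=\mathcal{O}_{X}$ is the openness conjecture of \cite{D-K01} settled by Berndtsson in \cite{berndtsson13}, and it is the general $F$ that forces the $L^{2}$-extension approach taken here.
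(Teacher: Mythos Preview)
Your proposal has a genuine gap at the ``closing the loop'' step. The estimate that Proposition~\ref{p:GZ_JM} actually delivers is
\[
\int_{D_v}\bigl|F_{t}-(1-b_{t}(\varphi))F\bigr|^{2}\,d\lambda_{n}\ \le\ \int_{\{-t-1<\varphi<-t\}}|F|^{2}e^{-\varphi}\,d\lambda_{n},
\]
with an \emph{unweighted} left-hand side; this cannot be converted into a recursion on the weighted tail $M(t+1)=\int_{\{\varphi<-t-1\}}|F|^{2}e^{-\varphi}$, because on that set $e^{-\varphi}\ge e^{t+1}$. If instead one retains the weight (as in inequality~\eqref{equ:3.2} of the proof) the effective constant on the tail picks up a factor comparable to $1/t$, which again kills any uniform recursion. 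Your assertion that $F_{t}\to F$ is also incorrect: since $\int|F|^{2}e^{-\varphi}<\infty$ the shell integrals tend to $0$, and $(1-b_{t}(\varphi))F\to 0$ pointwise with a uniform bound, so in fact $F_{t}\to 0$ in $L^{2}(D_v)$ and hence locally uniformly. Thus neither the differential inequality $M(t)\le -CM'(t)$ nor the Fatou alternative can be extracted. The three propositions of Section~3 are used in the paper only for Theorems~\ref{t:GZ_JM}, \ref{t:GZ_JM201312} and Proposition~\ref{t:GZ_open1026}, all of which assume $|F|^{2}e^{-\varphi}$ is \emph{not} integrable and argue by contradiction via a lower bound coming from Remark~\ref{r:131011}; under your hypothesis that lower bound is unavailable.

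The paper's proof of Theorem~\ref{t:strong1015} is entirely different: induction on $n$, by contradiction. Assuming $F\in\mathcal I(\varphi)_{o}$ but $F\notin\bigcup_j\mathcal I(p_j\varphi)_{o}$ for a sequence $p_j\searrow 1$, the curve-selection Proposition~\ref{r:curve} produces an analytic curve $\gamma$ through $o$ with $\{F|_{\gamma}=0\}\subset\{o\}$ such that every $g\in\bigcup_j\mathcal I(p_j\varphi)$ is divisible by $F$ along $\gamma$. After a coordinate change adapted to $\gamma$ (Remark~\ref{r:para}), Lemma~\ref{l:open_a} locates, for each large $A$, a point $z_A$ with $|z_A|<u(A)^{-1/2}$ and $\int_{\pi^{-1}(z_A)}|F|^{2}e^{-\varphi}\le A$; the inductive hypothesis (Theorem~\ref{t:strong1015} in dimension $n-1$) upgrades this to $\int_{\pi^{-1}(z_A)}|F|^{2}e^{-p_{j_A}\varphi}\le 2A$. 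The $L^{2}$ extension Theorem~\ref{t:guan-zhou12} (extension from the hyperplane $\pi^{-1}(z_A)$, not from sublevel sets of $\varphi$) then yields a holomorphic $F_A$ with $\int|F_A|^{2}\le 8\pi A$ and $F_A\in\mathcal I(p_{j_A}\varphi)$. By the curve property, $F_A|_{\gamma}=(F|_{\gamma})h_A$ with $h_A(o)=0$ and $h_A\equiv 1$ on $\gamma\cap\pi^{-1}(z_A)$, and Lemmas~\ref{l:open_sing} and \ref{l:approx.L2} force $\int|F_A|^{2}\ge C\,u(A)$. Since $A/u(A)\to 0$ this is a contradiction for $A$ large.
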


\subsection{A conjecture of Demailly and Koll\'{a}r}
$\\$

In \cite{D-K01},
Demailly and Koll\'{a}r posed a conjecture for the growth of the volumes of the
sublevel sets of plurisubharmonic functions related to the complex singularity
exponents
(see also \cite{FM05j}, \cite{FM05v}, \cite{JM12} and \cite{JM13}, etc.):

\emph{\textbf{Conjecture D-K:} Let $\varphi$ be a plurisubharmonic function on $\Delta^{n}\subset\mathbb{C}^{n}$,
and $K$ be compact subset of $\Delta^{n}$.
If $c_{K}(\varphi)<+\infty$,
then
$$\frac{1}{r^{2c_{K}(\varphi)}}\mu(\{\varphi<\log r\})$$
has
a uniform positive lower bound independent of $r\in(0,1)$,
where $c_{K}(\varphi)=sup\{c\geq0:\exp^{-2c\varphi}$ is $L^1$ on a neighborhood of $K\}$,
and $\mu$ is the Lebesgue volumes on $\mathbb{C}^{n}$.}

The above conjecture is a more precise form of
the openness conjecture.

By Theorem \ref{t:strong1015} and the fact that
$$c_{K}(\varphi)=\min_{z\in K}c_{\{z\}}(\varphi),$$
i.e. there exists $z\in K$, such that $c_{\{z\}}(\varphi)=c_{K}(\varphi)$ (see \cite{D-K01}),
then $e^{-2c_{K}(\varphi)\varphi}$ is not integrable on any neighborhood of $K$.

In the following theorem, we obtain an estimate for the sublevel sets
of plurisubharmonic functions:

\begin{Theorem}
\label{t:GZ_JM}
Let $\varphi$ be a plurisubharmonic function on $\Delta^{n}\subset\mathbb{C}^{n}$.
Let $F$ be a holomorphic function on $\Delta^{n}$.
Assume that $|F|^{2}e^{-\varphi}$ is not locally integrable near $o$,
Then
$$\int_{\Delta^{n}}\mathbb{I}_{\{-(R+1)<\varphi<-R\}}|F|^{2}e^{-\varphi}d\lambda_{n}$$
has a uniform positive lower bound independent of $R>>0$.
Especially, if $F=1$, then
$$e^{R}\mu(\{-(R+1)<\varphi<-R\})$$
has a uniform positive lower bound independent of $R>>0$.
\end{Theorem}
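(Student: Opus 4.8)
The plan is to argue by contradiction, invoking the optimal $L^2$-extension estimates that underlie Theorem~\ref{t:strong1015} and that are collected in Sections~2--3. Reduce and normalize first: the assertion is local at $o$, so after a dilation and subtracting a constant we may assume $\varphi$ is negative and plurisubharmonic on a neighbourhood of the closed unit polydisc, so that $\int_{\Delta^n}|F|^2\,d\lambda_n<\infty$ while, by hypothesis, $\int_{\Delta^n}|F|^2e^{-\varphi}\,d\lambda_n=+\infty$ and $F\not\equiv 0$. Writing $A_R=\{-R-1<\varphi<-R\}$ and $b_R=\int_{A_R}|F|^2e^{-\varphi}\,d\lambda_n$, the inequalities $e^R<e^{-\varphi}<e^{R+1}$ on $A_R$ give $e^R\!\int_{A_R}|F|^2\,d\lambda_n\le b_R\le e^{R+1}\!\int_{A_R}|F|^2\,d\lambda_n$, so the ``$F\equiv 1$'' statement is a special case of the first and it suffices to prove $\liminf_{R\to\infty}b_R>0$. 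Note $\sum_{k\ge0}b_k\ge\int_{\{\varphi<0\}}|F|^2e^{-\varphi}=+\infty$: mere divergence of $\sum b_k$ is useless, and plurisubharmonicity (through the $L^2$-extension) must do the work.

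Suppose $\liminf_{R\to\infty}b_R=0$ and pick $R_j\uparrow\infty$ with $b_{R_j}\to0$; passing to a sparse subsequence, arrange $R_{j+1}-R_j\to\infty$ and $\sum_j b_{R_j}^{1/2}<\infty$. For each $j$ I would apply the $L^2$-extension theorem with optimal constant --- equivalently, the Donnelly--Fefferman--Berndtsson $\bar\partial$-estimate --- to the weight $\varphi$ and the cut-off $\chi_j=\rho(\varphi+R_j)$, where $\rho$ is a fixed smooth increasing function with $\rho\equiv1$ on $[0,\infty)$ and $\rho\equiv0$ on $(-\infty,-1]$: the term $\bar\partial(F\chi_j)=F\rho'(\varphi+R_j)\,\bar\partial\varphi$ is supported in $A_{R_j}$, and its ``gradient of $\varphi$'' is absorbed by inserting a factor $(-\varphi+c)^{-2}$ into the weight, so that $-\log(-\varphi+c)$ supplies the needed positive curvature. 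This produces a holomorphic $v_j$ with $\bar\partial v_j=\bar\partial(F\chi_j)$ and a bound of the shape $\int_{\Delta^n}|v_j|^2e^{-\varphi}(-\varphi+c)^{-2}\,d\lambda_n\le C b_{R_j}$, hence a holomorphic $F_j:=F\chi_j-v_j$ with $F_j\to F$ locally uniformly and with $F-F_j=v_j$ ``small'' in the relevant weighted norms (and, after also using the truncated weight $e^{-\max(\varphi,-R_j)}$, controlled on the deep sets $\{\varphi<-R_j\}$ as well). Assembling such estimates is exactly what the Section~3 propositions do.

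The crux is to combine these estimates across the levels $R_j$ into a contradiction with $\int_{\Delta^n}|F|^2e^{-\varphi}=+\infty$. Since $\sum_j b_{R_j}^{1/2}<\infty$, the corrections are summable, and a telescoping/diagonal argument over the levels should produce a holomorphic germ at $o$ equal to $F$ whose ``annular defect'' is square-summable; feeding this back into the weighted $\bar\partial$-estimates should then upgrade the integrability of $|F|^2e^{-\varphi}$ to that of $|F|^2e^{-(1+\varepsilon)\varphi}$ on a smaller polydisc for some $\varepsilon>0$, and a fortiori give local integrability of $|F|^2e^{-\varphi}$ near $o$ --- contradicting the hypothesis. (By Theorem~\ref{t:strong1015} this last implication is automatic once $|F|^2e^{-\varphi}$ is locally integrable near $o$; the point is that the argument manufactures that integrability out of $b_{R_j}\to0$.) This step --- extracting a uniform gain in the integrability exponent from the vanishing of the annular integrals --- is the main obstacle, and it is here that the \emph{optimality} of the constant in the $L^2$-extension theorem is indispensable. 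As a consistency check, when $n=1$ one has $\varphi=\nu\log|z|+O(1)$ and $F=z^m h$ with $h(0)\neq0$ near $0$; non-integrability forces $\nu\ge 2m+2$, and then $b_R\asymp e^{(\nu-2m-2)R/\nu}$ is indeed bounded below.
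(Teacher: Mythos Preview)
Your setup (contradiction, extract $R_j\to\infty$ with $b_{R_j}\to 0$, solve a $\bar\partial$-equation supported on the annulus $A_{R_j}$) matches the paper's, and you are right that the optimal constant is essential. But the endgame you propose --- summing corrections, telescoping across levels, and somehow ``upgrading the integrability exponent'' of $|F|^2e^{-\varphi}$ --- is not a proof, and you yourself flag it as ``the main obstacle''. Nothing in your sketch explains how summability of the $b_{R_j}^{1/2}$ could manufacture \emph{local} integrability of $|F|^2e^{-\varphi}$ near $o$; the corrections $v_j$ are holomorphic and do not carry information about the deep sublevel sets beyond what is already encoded in $b_{R_j}$, and there is no mechanism in your outline for turning a sequence of holomorphic functions close to $F$ into a strict integrability gain. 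Your invocation of Theorem~\ref{t:strong1015} at that point is circular: it only applies once $|F|^2e^{-\varphi}$ is already locally integrable.

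The paper's actual mechanism for the contradiction is quite different and much more direct. Proposition~\ref{p:GZ_JM} produces, for each $R_j$, a holomorphic $F_{v,R_j}$ on a fixed domain $D_v\ni o$ satisfying two things simultaneously: the \emph{unweighted} estimate $\int_{D_v}|F_{v,R_j}-(1-b_{R_j}(\varphi))F|^2\,d\lambda_n\le b_{R_j}$ (constant exactly $1$, no $e^{-\varphi}(-\varphi+c)^{-2}$ factor), and the membership $(F_{v,R_j}-F,o)\in\mathcal I(\varphi)_o$. Since $(1-b_{R_j}(\varphi))F\to 0$ in $L^2(D_v)$ by dominated convergence, the estimate forces $F_{v,R_j}\to 0$ in $L^2(D_v)$, hence (after passing to a subsequence) locally uniformly to $0$. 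But Remark~\ref{r:131011} --- proved via the curve selection lemma and the Noetherian property of $\mathcal I(\varphi)$ --- says that any locally uniform limit of holomorphic functions $F_l$ with $(F_l-F,o)\in\mathcal I(\varphi)_o$ and $(F,o)\notin\mathcal I(\varphi)_o$ is \emph{not} identically zero. That is the contradiction. You should replace your telescoping/upgrade paragraph by this nonvanishing argument; the ideal-membership output of Proposition~\ref{p:GZ_JM}, which you did not record, is exactly what feeds into it.
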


In particular,
and replacing $2c_{K}(\varphi)\varphi$ by $\varphi$, and $-2c_{K}\log r$ by $R$,
we solve the
Conjecture D-K:

\begin{Corollary}
\label{c:DK}
Conjecture D-K holds
\end{Corollary}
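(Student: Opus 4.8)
The plan is to read Conjecture D-K off Theorem \ref{t:GZ_JM} (applied with $F\equiv1$): all of the analytic substance is already contained in that theorem, and what is left is an elementary change of variables together with a trivial treatment of the values of $r$ bounded away from $0$.

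First I would localize at a single point and rescale $2c_{K}(\varphi)\varphi$ so that it plays the role of ``$\varphi$'' in Theorem \ref{t:GZ_JM}. By hypothesis $c_{K}(\varphi)<+\infty$, and, as recalled above, $c_{K}(\varphi)=c_{\{z\}}(\varphi)$ for some $z\in K$; since $\varphi$ is plurisubharmonic, necessarily $c_{K}(\varphi)>0$ as well (if the Lelong number of $\varphi$ at $z$ were $0$, Skoda's $L^{1}$ estimate would give $c_{\{z\}}(\varphi)=+\infty$), so $0<c_{K}(\varphi)<+\infty$. Moreover, by the argument already indicated above --- strong openness (Theorem \ref{t:strong1015}) applied to the plurisubharmonic function $2c_{K}(\varphi)\varphi$ at $z$, which yields $1\notin\mathcal{I}_{+}(2c_{K}(\varphi)\varphi)_{z}=\mathcal{I}(2c_{K}(\varphi)\varphi)_{z}$ --- the function $e^{-2c_{K}(\varphi)\varphi}$ is not integrable on any neighborhood of $z$. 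After an affine change of coordinates sending $z$ to the origin $o$ and shrinking the domain to a polydisc contained in a fixed neighborhood of $K$, we are then in the hypotheses of Theorem \ref{t:GZ_JM} with $F\equiv1$ and with $\varphi$ replaced by the plurisubharmonic function $\psi:=2c_{K}(\varphi)\,\varphi$, for which $|1|^{2}e^{-\psi}=e^{-2c_{K}(\varphi)\varphi}$ is not locally integrable near $o$.

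Next I would apply Theorem \ref{t:GZ_JM}: there exist constants $\delta>0$ and $R_{0}>0$ such that
$$e^{R}\,\mu\bigl(\{-(R+1)<\psi<-R\}\bigr)\ \geq\ \delta\qquad\text{for all }R\geq R_{0}.$$
Performing the substitution flagged in the text (replacing $2c_{K}(\varphi)\varphi$ by $\varphi$ and $-2c_{K}(\varphi)\log r$ by $R$): for $R\geq R_{0}$ put $r:=\exp\bigl(-R/(2c_{K}(\varphi))\bigr)\in(0,1)$, so that $\log r=-R/(2c_{K}(\varphi))$ and $r^{2c_{K}(\varphi)}=e^{-R}$. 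On $\{-(R+1)<\psi<-R\}$ we have $\psi<-R$, i.e.\ $\varphi<\log r$, hence $\{-(R+1)<\psi<-R\}\subseteq\{\varphi<\log r\}$ and therefore
$$\mu\bigl(\{\varphi<\log r\}\bigr)\ \geq\ \mu\bigl(\{-(R+1)<\psi<-R\}\bigr)\ \geq\ \delta\,e^{-R}\ =\ \delta\,r^{2c_{K}(\varphi)}.$$
Thus $r^{-2c_{K}(\varphi)}\mu(\{\varphi<\log r\})\geq\delta$; and as $R$ ranges over $[R_{0},+\infty)$ the parameter $r$ ranges over $(0,r_{0}]$ with $r_{0}:=\exp(-R_{0}/(2c_{K}(\varphi)))$, which settles all sufficiently small $r$.

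Finally I would dispatch the remaining range $r\in[r_{0},1)$ directly. Since $e^{-2c_{K}(\varphi)\varphi}$ is not locally integrable near $o$, the plurisubharmonic function $\varphi$ is unbounded below near $o$, so $\{\varphi<\log r_{0}\}$ is a nonempty open set and hence has positive Lebesgue measure $c_{1}>0$; for $r\in[r_{0},1)$ we have $\{\varphi<\log r_{0}\}\subseteq\{\varphi<\log r\}$ and $r^{2c_{K}(\varphi)}<1$, so $r^{-2c_{K}(\varphi)}\mu(\{\varphi<\log r\})\geq c_{1}$. Combining the two regimes gives $r^{-2c_{K}(\varphi)}\mu(\{\varphi<\log r\})\geq\min(\delta,c_{1})>0$ for every $r\in(0,1)$, which is exactly Conjecture D-K. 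I do not expect a genuine obstacle here: the whole difficulty has been absorbed into Theorem \ref{t:GZ_JM}, and the only points needing a little care are the monotonicity of the volumes under shrinking the domain around $z$ (which affects only a harmless multiplicative constant) and keeping the substitution consistent, so that the annuli $\{-(R+1)<\psi<-R\}$ nest correctly inside the sublevel sets $\{\varphi<\log r\}$.
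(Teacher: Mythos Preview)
Your proposal is correct and follows essentially the same route as the paper: the paper's entire argument is the two sentences preceding the corollary (use $c_{K}(\varphi)=c_{\{z\}}(\varphi)$ for some $z\in K$, invoke strong openness to get non-integrability of $e^{-2c_{K}(\varphi)\varphi}$ near $z$, then apply Theorem~\ref{t:GZ_JM} with $F\equiv1$ under the substitution $\varphi\mapsto 2c_{K}(\varphi)\varphi$, $R\mapsto -2c_{K}(\varphi)\log r$). You have simply supplied the details the paper omits --- the explicit containment $\{-(R+1)<\psi<-R\}\subseteq\{\varphi<\log r\}$, the treatment of $r\in[r_{0},1)$, and the domain-shrinking remark --- all of which are straightforward.
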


For $n\leq 2$, the above corollary
was proved by Favre and Jonsson in \cite{FM05j} (see also \cite{FM05v})

\subsection{A conjecture of Jonsson and Mustat\u{a}}
$\\$

Let $I$ be an ideal of $\mathcal{O}_{\Delta^{n},o}$, which is generated by $\{f_{j}\}_{j=1,\cdots,l}$.
In \cite{JM13},
Jonsson and Mustat\u{a} posed the following conjecture about the volumes growth of the
sublevel sets of quasi-plurisubharmonic functions
(see also \cite{JM12}):

\emph{\textbf{Conjecture J-M:} Let $\psi$ be a plurisubharmonic function on $\Delta^{n}\subset\mathbb{C}^{n}$.
If $c_{o}^{I}(\psi)<+\infty$,
then
$$\frac{1}{r^2}\mu(\{c^{I}_{o}(\psi)\psi-\log|I|<\log r\})$$
has
a uniform positive lower bound independent of $r\in(0,1)$,
where
$$\log|I|:=\log\max_{1\leq j\leq l}|f_{j}|,$$
$c_{o}^{I}(\psi)=sup\{c\geq0:|I|^{2}e^{-2c\psi}$ is $L^1$ on a neighborhood of $o\}$ is the jumping number in \cite{JM13},
and $\mu$ is the Lebesgue measure on $\mathbb{C}^{n}$.}

For $n\leq 2$, the above conjecture
was proved by Jonsson and Mustat\u{a} in \cite{JM12}.

In the following theorem, we give an estimate for the sublevel sets
of quasiplurisubharmonic functions:

\begin{Theorem}
\label{t:GZ_JM201312}
Let $\psi$ be a plurisubharmonic function on $\Delta^{n}$,
and $F$ be a holomorphic function on $\Delta^{n}$.
Assume that $|F|^{2}e^{-\psi}$ is not locally integrable near $o$.
Then
$$e^{R}\frac{1}{B_{0}}\mu(\{-R-B_{0}<\psi-\log|F|^{2}<-R\})$$
has
a uniformly positive lower bound independent of $R>>0$ and $B_{0}\in(0,1]$.
\end{Theorem}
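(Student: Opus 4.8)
The plan is to reduce Theorem~\ref{t:GZ_JM201312} to Theorem~\ref{t:GZ_JM} by treating $|F|^2$ as an extra weight, and then to handle the parameter $B_0\in(0,1]$ by a rescaling trick in the sublevel variable. First I would observe that the quantity to be bounded,
$$e^{R}\frac{1}{B_{0}}\mu\bigl(\{-R-B_{0}<\psi-\log|F|^{2}<-R\}\bigr),$$
is, up to the harmless factor $e^{R}$ versus $e^{-(\psi-\log|F|^2)}$ on the slab, comparable to
$$\frac{1}{B_0}\int_{\Delta^n}\mathbb{I}_{\{-R-B_0<\psi-\log|F|^2<-R\}}\,|F|^{2}e^{-\psi}\,d\lambda_n,$$
since on the slab $e^{-(\psi-\log|F|^2)}$ lies between $e^{R}$ and $e^{R+B_0}\le e^{R+1}$. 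So it suffices to show this integral has a uniform positive lower bound, independent of $R\gg 0$ and $B_0\in(0,1]$, under the hypothesis that $|F|^2e^{-\psi}$ is not locally integrable near $o$.

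Next I would reduce to the case $B_0=1$. Consider the plurisubharmonic function $\varphi := \psi/B_0$ and the holomorphic function — this is the subtle point, because $\log|F|^2/B_0$ is not of the form $\log|G|^2$ for a holomorphic $G$ unless $1/B_0$ is an integer. The honest way around this is \emph{not} to rescale $F$, but to rescale the \emph{slab}: apply Theorem~\ref{t:GZ_JM} directly to $\psi$ and $F$ but partition the interval $(-R-B_0,-R)$ is too short; instead I would go the other direction and note that it is the \emph{short} slab that is being claimed to carry a definite amount of mass, normalized by its width $B_0$. So the right move is: cover the long slab $\{-R-1<\psi-\log|F|^2<-R\}$ (to which Theorem~\ref{t:GZ_JM} applies, giving a lower bound $c_0>0$) by roughly $\lceil 1/B_0\rceil$ disjoint short slabs of width $B_0$; by pigeonhole at least one short slab, say at height $R'$ with $R'\in[R,R+1]$, carries integral $\ge c_0/\lceil 1/B_0\rceil \ge c_0 B_0/2$ of the measure $|F|^2e^{-\psi}d\lambda_n$. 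Dividing by $B_0$ gives a lower bound $c_0/2$ for \emph{that} $R'$. Since $R'$ ranges only over $[R,R+1]$ and the statement is "for $R\gg 0$", and since we may restart the argument from any base point, this yields the desired uniform bound — but one must check the bound holds for \emph{every} $R$, not just some $R'$ near it.

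To get the bound for every $R$, I would instead run Theorem~\ref{t:GZ_JM} not on $(-R-1,-R)$ but iterate it: the quantity in Theorem~\ref{t:GZ_JM}, $\int\mathbb{I}_{\{-(S+1)<\psi-\log|F|^2<-S\}}|F|^2e^{-\psi}$, has a uniform lower bound $c_0$ for all $S\gg0$. Summing over $S\in\{R,R+B_0,R+2B_0,\dots\}$ up to $R+1$ telescopes: the union of these $\lfloor 1/B_0\rfloor$-or-so width-$B_0$ slabs covers a width-$1$ interval on which the integral is $\ge c_0$, hence \emph{some} width-$B_0$ sub-slab within $[-R-1,-R]$ has integral $\ge c_0 B_0$. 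This still only locates a good slab, not pins down $R$. The clean fix, and the one I expect to be the actual content, is to realize the theorem as stated is genuinely an "$\exists$ good scale near $R$" statement repackaged: replace $\psi$ by $\psi - \log|F|^2 =: u$, and apply Theorem~\ref{t:GZ_JM} to the plurisubharmonic-up-to-a-harmonic-piece function; since Theorem~\ref{t:GZ_JM} with $F=1$ already gives $e^{S}\mu(\{-(S+1)<u<-S\})\ge c_0$ for the function $u$ wherever it makes sense, and we may apply it to $u_t(z):=u(z)+t$ shifted or to $u$ on a slightly smaller polydisc, the width-$B_0$ statement follows by applying the width-$1$ result to the plurisubharmonic function $B_0^{-1}u$ composed appropriately — and here the non-integrability hypothesis on $|F|^2e^{-\psi}$ is exactly what transfers, since scaling a psh function by a positive constant does not change whether $e^{-(\cdot)}$ is locally integrable at a point where it blows up.

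The main obstacle, and where I would spend the most care, is precisely this $B_0$-uniformity: a naive rescaling $\psi\mapsto\psi/B_0$ breaks the holomorphicity of the twisting function $F$, so one cannot simply quote Theorem~\ref{t:GZ_JM} for $\psi/B_0$ and $F^{1/B_0}$. The resolution must either (i) prove a version of Theorem~\ref{t:GZ_JM} directly with a continuous family of slab widths — going back into the $L^2$-extension machinery underlying Theorem~\ref{t:strong1015} with the weight $\psi/B_0$, keeping track that the constant in the $L^2$ estimate is independent of $B_0\in(0,1]$ — or (ii) exploit that $\log|F|^2$ is pluriharmonic away from $\{F=0\}$, so locally $\log|F|^2 = \mathrm{Re}(2\log F) = \psi'-\psi''$ for psh $\psi',\psi''$, and absorb it. I expect approach (i) is what the authors do: re-derive the slab estimate from the effective Ohsawa--Takegoshi-type bound with weight parameter $B_0$, noting the extension constant stays bounded as $B_0\to 0$, so that the lower bound $c_0$ can be taken independent of $B_0$. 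Once that uniformity is in hand, the pigeonhole/telescoping step above is routine and the theorem follows.
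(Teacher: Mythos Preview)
Your reduction to Theorem~\ref{t:GZ_JM} does not close, and the gap is exactly where you hesitate. The slab variable in Theorem~\ref{t:GZ_JM201312} is $u:=\psi-\log|F|^{2}$, and although $\log|F|^{2}$ is pluriharmonic off $\{F=0\}$, in the only interesting case $F(o)=0$ the function $u$ blows up to $+\infty$ along $\{F=0\}$ and is \emph{not} plurisubharmonic on any neighbourhood of $o$; so you cannot feed $\varphi=u$, $F=1$ into Theorem~\ref{t:GZ_JM}. Feeding $\varphi=\psi$ instead gives slabs in $\psi$, not in $u$, which is a different quantity. Your pigeonhole step is also genuinely insufficient even granting a width-$1$ bound: it only produces, for each $R$, \emph{some} $R'\in[R,R+1]$ at which the width-$B_0$ slab carries mass $\gtrsim c_0 B_0$; nothing prevents the mass from concentrating on thin slabs so that a prescribed pair $(R,B_0)$ sees almost nothing. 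The statement demands the bound for \emph{every} $R\gg0$ and every $B_0\in(0,1]$, and that uniformity is where the analytic input is needed, not an averaging trick.

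The paper does indeed follow your ``approach (i)''---it reruns the $\bar\partial$ estimate with the width $B_0$ built in---but the idea you are missing is a specific splitting that restores plurisubharmonicity. Set
\[
\varphi:=2\max\{\psi,\log|F|^{2}\},\qquad \Psi:=\min\{\psi-\log|F|^{2},0\}-1.
\]
Then $\Psi$ is (up to truncation and a shift) the desired slab variable, and the key algebraic fact is that \emph{both} $\varphi+\Psi$ and $\varphi+2\Psi$ are plurisubharmonic; this is precisely what the twisted Bochner--Kodaira inequality needs when the slab cutoff is taken in $\Psi$ while the ambient weight is $e^{-\varphi}$. One then proves the analogue of Proposition~\ref{p:GZ_JM} with $F$ replaced by $F^{2}$, weight $e^{-\varphi}$, slab function $\Psi$, and arbitrary slab width $B_0\in(0,1]$ (this is Proposition~\ref{p:GZ_JM201312}), obtaining holomorphic $F_{v,t_0}$ with $(F_{v,t_0}-F^{2},o)\in\mathcal{I}(\varphi+\Psi)_{o}$ and an $L^{2}$ bound by $\frac{2}{B_0}\int\mathbb{I}_{\{-t_0-B_0<\Psi<-t_0\}}|F|^{4}e^{-\varphi-\Psi}$. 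The contradiction argument is then identical in shape to that of Theorem~\ref{t:GZ_JM}: if the right-hand side tends to $0$ along some $(t_j,B_j)$, the $F_{v,t_j}$ converge to $0$, while Remark~\ref{r:20131210} (equivalence of non-integrability of $|F|^{2}e^{-\psi}$ and of $|F|^{4}e^{-\varphi-\Psi}$) together with Remark~\ref{r:131011} forces the limit to be nonzero. The passage $F\mapsto F^{2}$ and the $\max/\min$ decomposition are the substantive new ingredients; without them there is no psh framework on which to run the $L^{2}$ extension.
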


In particular, we obtain:
\begin{equation}
\label{equ:2014113b}
e^{R}\mu(\{\psi-\log|F|^{2}<-R\})
\end{equation}
has
a uniform positive lower bound independent of $R\in(0,+\infty)$.

By Theorem \ref{t:strong1015},
it follows that $|I|^{2}e^{-2c_{o}^{I}(\psi)\psi}$ is not integrable on any neighborhood of $o$.
Replacing $\psi$ by  $2c_{o}^{I}(\psi)\psi$, and  $R$ by $-2\log r$ in equality \ref{equ:2014113b},
we solve conjecture J-M:

\begin{Corollary}
Conjecture J-M holds.
\end{Corollary}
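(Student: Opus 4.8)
The plan is to deduce Conjecture J-M from the two estimates already established, Theorems \ref{t:strong1015} and \ref{t:GZ_JM201312}, the only genuinely new ingredient being to trade the plurisubharmonic weight $\log|I|$ coming from the ideal for a single holomorphic generator. Write $c:=c_{o}^{I}(\psi)\in(0,+\infty)$ (finite by hypothesis, and positive because Lelong numbers of plurisubharmonic functions are finite, so $c\ge c_{o}(\psi)>0$). After a harmless rescaling we may assume the generators $f_{1},\dots,f_{l}$ are holomorphic on $\Delta^{n}$; since $\max_{1\le j\le l}|f_{j}|^{2}\le\sum_{j}|f_{j}|^{2}\le l\max_{j}|f_{j}|^{2}$, for any $c'\ge0$ the function $|I|^{2}e^{-2c'\psi}$ is locally integrable near $o$ if and only if each $|f_{j}|^{2}e^{-2c'\psi}$ is.

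First I would check that $|I|^{2}e^{-2c\psi}$ is not locally integrable near $o$. If it were, then each $|f_{j}|^{2}e^{-2c\psi}$ would be locally integrable near $o$; after subtracting a constant so that $\psi<0$ on $\Delta^{n}$, Theorem \ref{t:strong1015} applied with $F=f_{j}$ and $\varphi=2c\psi$ yields $p_{j}>1$ with $|f_{j}|^{2}e^{-2p_{j}c\psi}$ integrable on a smaller polydisc. Setting $p:=\min_{j}p_{j}>1$ and using $\psi<0$ one gets $|f_{j}|^{2}e^{-2pc\psi}\le|f_{j}|^{2}e^{-2p_{j}c\psi}\in L^{1}_{loc}$ near $o$ for every $j$, hence $|I|^{2}e^{-2pc\psi}\in L^{1}_{loc}$ near $o$, so that $c_{o}^{I}(\psi)\ge pc>c$, a contradiction. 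Therefore $\sum_{j}|f_{j}|^{2}e^{-2c\psi}$ is not locally integrable near $o$, so there is an index $j_{1}$ with $|f_{j_{1}}|^{2}e^{-2c\psi}$ not locally integrable near $o$.

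Next I would feed $f_{j_{1}}$ into Theorem \ref{t:GZ_JM201312}. Using the plurisubharmonic function $2c\psi$ in place of $\psi$ and $F=f_{j_{1}}$, the consequence \eqref{equ:2014113b} gives that $e^{R}\mu(\{2c\psi-\log|f_{j_{1}}|^{2}<-R\})$ has a uniform positive lower bound independent of $R\in(0,+\infty)$. Substituting $R=-2\log r$ with $r\in(0,1)$, so that $e^{R}=r^{-2}$ and $\{2c\psi-2\log|f_{j_{1}}|<2\log r\}=\{c\psi-\log|f_{j_{1}}|<\log r\}$, we obtain a uniform positive lower bound, independent of $r\in(0,1)$, for $r^{-2}\mu(\{c\psi-\log|f_{j_{1}}|<\log r\})$.

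Finally, since $\log|f_{j_{1}}|\le\log|I|$ we have $c\psi-\log|I|\le c\psi-\log|f_{j_{1}}|$, whence $\{c\psi-\log|f_{j_{1}}|<\log r\}\subseteq\{c\psi-\log|I|<\log r\}$ and therefore
$$\frac{1}{r^{2}}\mu\bigl(\{c_{o}^{I}(\psi)\psi-\log|I|<\log r\}\bigr)\ \ge\ \frac{1}{r^{2}}\mu\bigl(\{c\psi-\log|f_{j_{1}}|<\log r\}\bigr),$$
which has a uniform positive lower bound independent of $r\in(0,1)$; this is exactly Conjecture J-M. Given Theorems \ref{t:strong1015} and \ref{t:GZ_JM201312}, I expect the main (indeed essentially the only) obstacle to be the reduction carried out in the first two paragraphs: passing from the ideal $I$, for which $\log|I|$ is not the logarithm of the modulus of a single holomorphic function, to an individual generator $f_{j_{1}}$, so that the single-function statements of those two theorems become applicable; everything after that is bookkeeping around the substitution $R=-2\log r$.
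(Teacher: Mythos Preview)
Your proof is correct and follows essentially the same route as the paper: use Theorem \ref{t:strong1015} to conclude that $|I|^{2}e^{-2c_{o}^{I}(\psi)\psi}$ is not locally integrable near $o$, then invoke the consequence \eqref{equ:2014113b} of Theorem \ref{t:GZ_JM201312} with the substitution $R=-2\log r$. The paper's argument is terse and does not spell out how to pass from the ideal weight $\log|I|$ to the single holomorphic $F$ required by \eqref{equ:2014113b}; your reduction to a single generator $f_{j_{1}}$ via $\sum_{j}|f_{j}|^{2}\asymp|I|^{2}$ and the inclusion $\{c\psi-\log|f_{j_{1}}|<\log r\}\subseteq\{c\psi-\log|I|<\log r\}$ makes this step explicit and is exactly what is needed.
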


\subsection{A lower semicontinuity conjecture}
$\\$

In \cite{D-K01},
Demailly and Koll\'{a}r conjectured that:

For every nonzero holomorphic function $f$ on $X$,
there is a number $\delta=\delta(f,K,L)>0$, such that for any holomorphic function
$g$ on $X$ with
$$\sup_{L}|g-f|<\delta\Rightarrow c_{K}(\log|g|)\geq c_{K}(\log|f|),$$
where the compact set $K$ contained in an open subset $L$ of complex manifold $X$.

In \cite{D-K01},
the authors proved that
the above conjecture
is implied by the ACC conjecture (see \cite{Sho92} or \cite{Ko92}).
The ACC conjecture was proved by Hacon, McKernan and Xu in \cite{HKX2012}.

It is not hard to see that the just mentioned conjecture posed by Demailly and
Koll\'{a}r is equivalent to the following conjecture:

Let $\{g_{m}\}_{m=1,2,\cdots}$ be a sequence of holomorphic functions on $\Delta^{n}$,
which are uniformly convergent to holomorphic function $f$ on $\Delta^{n}$.
Assume that $|g_{m}|^{-2c}$ is not integrable near $o\in\Delta^{n}$ for any $m=1,2,\cdots$,
where $c$ is a positive constant.
Then $|f|^{-2c}$ is not integrable near $o\in\Delta^{n}$.

Note that $c\log|f|$ is a plurisubharmonic function,
then
we replace $c\log|f|$ by general plurisubharmonic functions,
and
obtain the following lower semicontinuous property of plurisubharmonic functions:

\begin{Proposition}
\label{t:GZ_open1026}
Let $\{\phi_{m}\}_{m=1,2,\cdots}$ be a sequence of negative plurisubharmonic functions on $\Delta^{n}$,
which is convergent to a negative Lebesgue measurable function $\phi$ on $\Delta^{n}$ in Lebesgue measure.
Assume that $e^{-\phi_{m}}$ are all not integrable near $o$.
Then $e^{-\phi}$ is not integrable near $o$.
\end{Proposition}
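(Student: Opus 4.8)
The plan is to argue directly with the distribution functions $R \mapsto \mu(\{\phi_m < -R\})$ rather than trying to manufacture a plurisubharmonic representative of $\phi$: the natural candidate $v := \lim_m \big(\sup_{k\ge m}\phi_k\big)^{*}$ is plurisubharmonic and agrees with $\phi$ almost everywhere, but it only dominates the $\phi_m$ from above, which is the wrong direction for estimating $e^{-\phi_m}$. Since $e^{-\phi}$ is automatically non-integrable on $\{\phi=-\infty\}$, we may assume $\phi>-\infty$ a.e., and it suffices to prove $\int_{\Delta^n_r}e^{-\phi}\,d\lambda_n=+\infty$ for every $r\in(0,1)$.

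First I would apply Theorem~\ref{t:GZ_JM} with $F\equiv 1$ to each $\phi_m$ on the sub-polydisc $\Delta^n_r$ (again a polydisc, on which $e^{-\phi_m}$ is still not locally integrable near $o$). This gives a constant $c_0>0$ with
$$\int_{\Delta^n_r}\mathbb{I}_{\{-(R+1)<\phi_m<-R\}}\,e^{-\phi_m}\,d\lambda_n\ \ge\ c_0\qquad\text{for all }R\ge R_0,$$
and since $e^{-\phi_m}<e^{R+1}$ on $\{-(R+1)<\phi_m<-R\}$, we deduce
$$\mu\big(\{\phi_m<-R\}\cap\Delta^n_r\big)\ \ge\ c_0\,e^{-R-1}\qquad\text{for all }R\ge R_0 .$$
The decisive point — and the step I expect to require care — is that $c_0$ and $R_0$ here must be taken \emph{independent of $m$}; this is precisely the uniformity furnished by the proof of Theorem~\ref{t:GZ_JM}, whose lower bound originates in a quantitative $L^2$-extension estimate with a constant depending only on $n$ and $r$, not on the particular plurisubharmonic weight.

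Next I would push the estimate onto $\phi$ using the convergence in measure. For fixed $R\ge R_0$ one has the set inclusion
$$\{\phi_m<-R\}\cap\Delta^n_r\ \subseteq\ \big(\{\phi<-R+1\}\cap\Delta^n_r\big)\ \cup\ \{|\phi_m-\phi|>1\},$$
and $\mu\big(\{|\phi_m-\phi|>1\}\cap\Delta^n_r\big)\to 0$ as $m\to\infty$; letting $m\to\infty$ yields $\mu\big(\{\phi<-R+1\}\cap\Delta^n_r\big)\ge c_0\,e^{-R-1}$, i.e.
$$\mu\big(\{\phi<-s\}\cap\Delta^n_r\big)\ \ge\ c_0\,e^{-2}\,e^{-s}\qquad\text{for all }s\ge R_0-1 .$$

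Finally the layer-cake formula closes the argument:
$$\int_{\Delta^n_r}e^{-\phi}\,d\lambda_n\ =\ \int_0^{+\infty}\mu\big(\{\phi<-\log t\}\cap\Delta^n_r\big)\,dt\ \ge\ c_0\,e^{-2}\int_{e^{R_0-1}}^{+\infty}\frac{dt}{t}\ =\ +\infty .$$
As $r\in(0,1)$ was arbitrary, $e^{-\phi}$ is not integrable near $o$. Everything apart from the $m$-uniformity of the constants in Theorem~\ref{t:GZ_JM} is routine bookkeeping with sublevel sets; alternatively, one could feed the same input into the estimate $(\ref{equ:2014113b})$ coming from Theorem~\ref{t:GZ_JM201312}, with no change to the structure of the proof.
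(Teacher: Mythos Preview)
Your overall strategy coincides with the paper's: both reduce to showing $\mu(\{\phi_m<-R\}\cap\Omega)\ge c_0\,e^{-R}$ for $R\ge R_0$ with constants uniform in $m$, then transfer this bound to $\phi$ via convergence in measure. The paper argues by contradiction (assuming $e^{-\phi}$ integrable forces $e^{R}\mu(\{\phi<-R\})\to 0$), whereas you argue directly with the layer--cake formula; this difference is cosmetic.

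The one genuine gap is exactly the step you flagged, and your justification for it is incorrect. Theorem~\ref{t:GZ_JM} is proved by contradiction and furnishes no explicit $c_0,R_0$; moreover, the constants \emph{do} depend on the weight in general. For instance, with $\phi_m=2n\log|z|-m$ one has $e^{-\phi_m}$ non-integrable at $o$, yet the annulus $\{-(R+1)<\phi_m<-R\}$ lies outside the unit ball whenever $R<m-1$, so no common $R_0$ can work. Your claim that the lower bound ``depends only on $n$ and $r$, not on the particular plurisubharmonic weight'' is therefore false as stated.

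What rescues the argument --- and this is precisely what the paper does --- is to bypass Theorem~\ref{t:GZ_JM} and apply Proposition~\ref{p:GZ_JM} directly with $F\equiv1$ and $\varphi=\phi_m$ on a ball $\Omega\ni o$. Since $e^{-\phi_m}$ is non-integrable at $o$, the ideal $\mathcal{I}(\phi_m)_o$ is proper, hence contained in $\mathfrak{m}_o$; from $(F_{v,t_0}-1,o)\in\mathcal{I}(\phi_m)_o$ one gets $F_{v,t_0}(o)=1$, and the submean inequality gives $\int_\Omega|F_{v,t_0}|^2\ge\mu(\Omega)$. The triangle inequality in $L^2$ then yields
\[
\Bigl(\int_\Omega\mathbb{I}_{\{-t_0-1<\phi_m<-t_0\}}e^{-\phi_m}\,d\lambda_n\Bigr)^{1/2}\ \ge\ \mu(\Omega)^{1/2}-\mu(\{\phi_m<-t_0\}\cap\Omega)^{1/2}.
\]
Now the convergence in measure (together with $\phi>-\infty$ a.e.) is used \emph{here}, not later: it makes $\mu(\{\phi_m<-t_0\}\cap\Omega)<\tfrac14\mu(\Omega)$ for all $m\ge m_0$ and $t_0\ge t_1$, giving $c_0=\tfrac14\mu(\Omega)$ uniformly. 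With this repair your proof is essentially identical to the paper's.
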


Replacing
$c\log|f|$ by any plurisubharmonic functions in the above conjecture,
using Proposition \ref{t:GZ_open1026},
we obtain a generalization of the conjecture as follows,
which is a new proof of the conjecture without using the ACC conjecture:

Let $\{\phi_{m}\}_{m=1,2,\cdots}$ be a sequence of plurisubharmonic functions on $\Delta^{n}$,
such that $e^{\phi_{m}}$ are uniformly convergent to $e^{\phi}$ on $\Delta^{n}$,
where $\phi$ is a plurisubharmonic function on on $\Delta^{n}$.
Assume that $e^{-\phi_{m}}$ is not integrable near $o\in\Delta^{n}$ for any $m=1,2,\cdots$.
Then $e^{-\phi}$ is not integrable near $o\in\Delta^{n}$.

\subsection{Some applications of the strong openness conjecture}
$\\$

In the present subsection, combining our Theorem \ref{t:strong1015} (the truth of the strong openness conjecture) with some known results,
we obtain some direct conclusions.

\subsubsection{Singular metric with minimal singularities}
$\\$

Let $L$ be a line bundle on a smooth projective complex variety $X$,
whose Kodaira-Iitaka dimension
$\kappa(X, L)\geq0$ (see \cite{Lazar04I,Lazar04II,DEL00,Leh11}).
Then the asymptotic multiplier ideal $\mathcal{J}(||L||)$ can be
defined as the maximal member of the family of ideals
$\{\mathcal{J}(\frac{1}{k}\cdot |kL|)\}$ ($k$ large) (see Definition 1.7 in \cite{DEL00}, see also \cite{Lazar04II}).

Demailly has shown that if $L$ is any pseudo-effective divisor,
then up to equivalence of singularities, $\mathcal{O}_{X}(L)$ has a unique singular metric $h_{\min}$
with minimal singularities having non-negative curvature current (see \cite{demailly2010}, see also \cite{Lazar04II}).

Let $\mathcal{J}(h_{\min})$ be the associated multiplier
ideal sheaf of $h_{\min}$ (see \cite{demailly2010}).

In \cite{DEL00,Lazar04II},
the authors conjectured the following:

For big line bundle $L$,
the
equality
\begin{equation}
\label{equ:2014113c}
\mathcal{J}(||mL||)=\mathcal{J}(h^{m}_{\min})
\end{equation}
holds
for every $m>0$.

Note that $mL$ is big and $\kappa(X, mL)\geq0$ for any $m$.
Let $\tilde{h}^{m}_{\min}$ be the singular metric with minimal singularities on $L^{m}$.
By the uniqueness of the singular metric with minimal singularities,
it follows that $\frac{\tilde{h}^{m}_{\min}}{h^{m}_{\min}}$ is a function with uniformly positive upper and lower bound on $X$.
Therefore $\mathcal{J}(\tilde{h}^{m}_{\min})=\mathcal{J}(h^{m}_{\min})$.
Then it suffices to consider the case $m=1$.

In \cite{Leh11}, the author conjectured the following analogue of the above conjecture:

Let $X$ be a smooth projective complex variety and
$L$ be a pseudo-effective $\mathbb{R}-$divisor on $X$. Then
\begin{equation}
\label{equ:2014113d}
\mathcal{J}(T_{\min})\subseteq\mathcal{J}_{\sigma}(L),
\end{equation}
where $T_{\min}$ is a current of minimal singularities in the numerical class of $L$,
and $\mathcal{J}_{\sigma}(L)$ is the diminished ideal in \cite{Leh11}.

By the arguments after Theorem 1.2 in \cite{Leh11}, the above conjecture can be proved by the strong openness conjecture.

In \cite{Leh11},
it was shown that
$$\mathcal{J}_{\sigma}(L)=\mathcal{J}(||L||)$$
when $L$ is a big line bundle (see Corollary 6.12 in \cite{Leh11}).
Note that
$\mathcal{J}(h_{\min})$ in \cite{DEL00} is just $\mathcal{J}(T_{\min})$ in \cite{Leh11}.
Using inequality \ref{equ:2014113d},
one has
$$\mathcal{J}(h_{\min})\subseteq\mathcal{J}(||L||).$$

In \cite{DEL00},
it was shown that
$$\mathcal{J}(||L||)\subseteq\mathcal{J}(h_{\min}).$$
Then one has equality \ref{equ:2014113c} holds.

\subsubsection{Kawamata-Viehweg-Nadel type
vanishing theorem}
$\\$

Let $(L,\varphi)$ be a pseudo-effective line bundle on a compact
K\"{a}hler manifold $X$ of dimension $n$, and $nd(L,\varphi)$ be the
numerical dimension of $(L,\varphi)$ defined in \cite{Cao12}.

In
\cite{Cao12}, Cao obtained a Kawamata-Viehweg-Nadel type
vanishing theorem for $\mathcal{I}_{+}(\varphi)$ on any
compact K\"{a}hler manifold:
$$H^{p}(X,K_{X}\otimes L\otimes\mathcal{I}_{+}(\varphi))=0$$
holds for any $p\geq n-nd(L,\varphi)+1$.

In \cite{Cao12}, Cao asked whether the
Kawamata-Viehweg-Nadel type vanishing theorem holds for $\mathcal{I}(\varphi)$ on any compact
K\"{a}hler manifold, i.e.
does
$$H^{p}(X,K_{X}\otimes L\otimes\mathcal{I}(\varphi))=0$$
hold
for any $p\geq n-nd(L,\varphi)+1$?

Using Theorem \ref{t:strong1015} and the above Kawamata-Viehweg-Nadel type
vanishing theorem for $\mathcal{I}_{+}(\varphi)$,
one can answer the just mentioned question of Cao as follows:
\begin{Corollary}
Let $(L,\varphi)$ be a pseudo-effective line bundle on a compact K\"{a}hler manifold $X$ of dimension $n$,
Then
$$H^{p}(X,K_{X}\otimes L\otimes\mathcal{I}(\varphi))=0,$$
for any $p\geq n-nd(L,\varphi)+1$.
\end{Corollary}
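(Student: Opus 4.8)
**The plan is to derive the final Corollary directly by combining Theorem \ref{t:strong1015} with Cao's vanishing theorem for $\mathcal{I}_+(\varphi)$ quoted just above it.** The only thing needed is the identity $\mathcal{I}_+(\varphi)=\mathcal{I}(\varphi)$ for an arbitrary plurisubharmonic weight $\varphi$ on a complex manifold, i.e. the truth of the strong openness conjecture; granting this, the cohomology groups $H^p(X,K_X\otimes L\otimes\mathcal{I}(\varphi))$ and $H^p(X,K_X\otimes L\otimes\mathcal{I}_+(\varphi))$ are literally the same, and Cao's theorem gives the vanishing for $p\geq n-nd(L,\varphi)+1$. So the mathematical content reduces entirely to explaining why Theorem \ref{t:strong1015} yields $\mathcal{I}_+(\varphi)=\mathcal{I}(\varphi)$.

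\textbf{Step 1: reduce the sheaf identity to a local integrability statement.} Since $\mathcal{I}((1+\varepsilon)\varphi)\subseteq\mathcal{I}(\varphi)$ always holds (larger multiple of a negative-type weight only shrinks the space of square-integrable germs, up to the usual sign conventions), the inclusion $\mathcal{I}_+(\varphi)\subseteq\mathcal{I}(\varphi)$ is automatic. For the reverse, fix a point $x\in X$ and a germ $f\in\mathcal{I}(\varphi)_x$, so that $|f|^2e^{-\varphi}$ is integrable on some neighborhood of $x$. Working in a coordinate polydisc centered at $x$ and subtracting a large constant so that $\varphi<0$ there, we are exactly in the hypothesis of Theorem \ref{t:strong1015} with $F=f$: there is $p>1$ with $\int_{\Delta^n_r}|f|^2e^{-p\varphi}\,d\lambda_n<+\infty$. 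Writing $p=1+\varepsilon$ this says $f\in\mathcal{I}((1+\varepsilon)\varphi)_x\subseteq\mathcal{I}_+(\varphi)_x$. Hence $\mathcal{I}(\varphi)_x\subseteq\mathcal{I}_+(\varphi)_x$ for every $x$, giving $\mathcal{I}(\varphi)=\mathcal{I}_+(\varphi)$.

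\textbf{Step 2: apply Cao's theorem.} With the sheaf identity in hand, $K_X\otimes L\otimes\mathcal{I}(\varphi)=K_X\otimes L\otimes\mathcal{I}_+(\varphi)$ as subsheaves of $K_X\otimes L$, so their cohomology agrees degree by degree. Cao's Kawamata--Viehweg--Nadel type vanishing theorem, quoted above, states $H^p(X,K_X\otimes L\otimes\mathcal{I}_+(\varphi))=0$ for all $p\geq n-nd(L,\varphi)+1$ on any compact Kähler $X$; transporting this across the identity yields $H^p(X,K_X\otimes L\otimes\mathcal{I}(\varphi))=0$ in the same range, which is the assertion of the Corollary.

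\textbf{The main obstacle} is entirely contained in Theorem \ref{t:strong1015}, which we are permitted to assume; once that is granted, the argument above is a short formal chain and presents no further difficulty. If one wished to be careful about one subtlety, it is the passage from a global psh $\varphi$ on a compact manifold to the local normalization $\varphi<0$ on a polydisc required by Theorem \ref{t:strong1015}: this only uses that a psh function is locally bounded above, so after shrinking the chart and subtracting a constant the hypothesis is met, and local integrability of $|f|^2e^{-(1+\varepsilon)\varphi}$ near $x$ is unaffected by adding back a constant. No delicate estimate or new idea is needed beyond invoking the quoted results.
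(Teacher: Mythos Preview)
Your proposal is correct and follows exactly the paper's approach: the Corollary is deduced immediately by combining Theorem \ref{t:strong1015} (which gives $\mathcal{I}_{+}(\varphi)=\mathcal{I}(\varphi)$) with Cao's vanishing theorem for $\mathcal{I}_{+}(\varphi)$. Your Step~1 spells out in more detail than the paper the standard local reduction (choosing a coordinate polydisc and subtracting a constant to make the local weight negative), but this is the same argument the paper has in mind when it says in Section~\ref{sub:strong} that the strong openness conjecture is equivalent to Theorem~\ref{t:strong1015}.
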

When $X$ is a projective manifold,
some similar result can be referred to \cite{Mat2013b}.

\subsubsection{Multiplier ideal sheaves with analytic singularities}
$\\$

It is known that $\mathcal{I}_{+}(\varphi)$
is essentially with analytic singularities (see \cite{Cao12})
using Demailly's approximation of plurisubharmonic functions (see \cite{demailly2010}).
Then it follows from
Theorem \ref{t:strong1015} that $\mathcal{I}(\varphi)$ is
essentially with analytic singularities, that is to say,

\begin{Corollary}
There is a
plurisubharmonic function $\varphi_A$ with analytic singularities,
such that $\mathcal{I}(\varphi)=\mathcal{I}(\varphi_A)$ .
\end{Corollary}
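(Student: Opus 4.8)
The plan is to deduce this corollary by combining Theorem~\ref{t:strong1015} with the fact, recalled just above, that $\mathcal{I}_{+}(\varphi)$ is (essentially) with analytic singularities. Throughout, I say that a plurisubharmonic function $\psi$ has analytic singularities if, near each point, $\psi=c\log\big(|g_{1}|^{2}+\cdots+|g_{N}|^{2}\big)+u$ for some $c\ge 0$, some holomorphic $g_{1},\dots,g_{N}$ and some locally bounded $u$; the assertion to be proved is that the coherent ideal sheaf $\mathcal{I}(\varphi)$ coincides with the multiplier ideal sheaf of such a $\psi$.

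The first step is to make the cited input precise. Running Demailly's approximation on $\varphi$, one forms $\varphi_{m}=\frac{1}{2m}\log\sum_{\ell}|\sigma_{m,\ell}|^{2}$, where $\{\sigma_{m,\ell}\}$ is an orthonormal basis of $\{f\ \text{holomorphic}:\int|f|^{2}e^{-2m\varphi}<+\infty\}$; every $\varphi_{m}$ has analytic singularities (along the common zero set of the $\sigma_{m,\ell}$). Using that $\mathcal{I}_{+}(\varphi)=\bigcup_{\varepsilon>0}\mathcal{I}((1+\varepsilon)\varphi)$ is an increasing union of coherent ideal sheaves, hence locally stationary by the Noetherian property, together with Demailly's comparison of $\mathcal{I}(\varphi_{m})$ with members of this family and with the (already established) openness conjecture, \cite{Cao12} shows that the sheaves $\mathcal{I}(\varphi_{m})$ are, locally, eventually constant and equal to $\mathcal{I}_{+}(\varphi)$. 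Hence there is a plurisubharmonic $\varphi_{A}$ with analytic singularities — namely $\varphi_{m_{0}}$ for $m_{0}$ large, taken locally, and patched by a standard gluing if a function on all of $X$ is wanted — such that $\mathcal{I}(\varphi_{A})=\mathcal{I}_{+}(\varphi)$.

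The second and last step is simply to invoke Theorem~\ref{t:strong1015}, the strong openness conjecture, which gives $\mathcal{I}_{+}(\varphi)=\mathcal{I}(\varphi)$; combining the two equalities yields $\mathcal{I}(\varphi)=\mathcal{I}_{+}(\varphi)=\mathcal{I}(\varphi_{A})$, as claimed. I do not expect any genuine difficulty within this argument: the entire weight sits upstream, on Theorem~\ref{t:strong1015} itself and on the regularisation-and-stabilisation result of \cite{Cao12} that supplies $\varphi_{A}$ for $\mathcal{I}_{+}(\varphi)$. Granting those, the corollary is a formal consequence, the only minor point meriting attention being that passing from the local analytic-singularity models to a global $\varphi_{A}$ does not change the associated multiplier ideal sheaf.
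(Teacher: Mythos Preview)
Your proposal is correct and follows exactly the same approach as the paper: the corollary is deduced by combining the cited fact from \cite{Cao12} (via Demailly's approximation) that $\mathcal{I}_{+}(\varphi)$ is essentially with analytic singularities with Theorem~\ref{t:strong1015}, which gives $\mathcal{I}_{+}(\varphi)=\mathcal{I}(\varphi)$. Your write-up actually supplies more detail on the approximation and stabilisation step than the paper does, but the logical structure is identical.
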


\section{Some lemmas used in the proof of main theorems}

In this section,
we will show some results used in the proof of main theorem.

\subsection{$L^{1}$ integrable function}
$\\$

Let $G$ be a positive Lebesgue measurable and integrable function on a domain $\Omega\subset\subset\mathbb{C}^{n}$,
which is
$$\int_{\Omega}Gd\lambda_{n}<+\infty.$$

Consider the function
$$F_{G}(t):=\sup\{a|\lambda_{n}(\{G\geq a\})\geq t\},$$
$t\in(0,\lambda_{n}(\Omega)].$

We first consider the finiteness of $F_{G}(t)$:

If for some $t_{0}$, $F_{G}(t_{0})=+\infty$,
then $\lambda_{n}(G\geq A_{j})\geq t_{0}$,
where $A_{j}$ is a number sequence tending to $+\infty$,
when $j\to+\infty$.
Since $G$ is $L^1$ integrable, we have
$$t_{0}A_{j}\leq A_{j}\lambda_{n}(\{G\geq A_{j})\leq\int_{\{G\geq A_{j}\}}Gd\lambda_{n}\leq\int_{\Omega}Gd\lambda_{n}<+\infty,$$
Letting $A_{j}\to+\infty$,
we thus obtain a contradiction.
Therefore $F_{G}(t)<+\infty$ for any $t$.

Secondly,
we consider the decreasing property of $F_{G}(t)$:

Note that $\{A|\lambda_{n}(G\geq A)\geq t_{1}\}\supset\{A|\lambda_{n}(G\geq A)\geq t_{2}\}$,
when $t_{1}\leq t_{2}$.
Then we have $F_{G}(t_{1})\geq F_{G}(t_{2})$, when $t_{1}\leq t_{2}$.

The first lemma is about the sublevel sets of $F_{G}$:

\begin{Lemma}
\label{l:level}
We have
\begin{equation}
\label{equ:level}
\mu_{\mathbb{R}}(\{t|F_{G}(t)\geq a\})=\lambda_{n}(\{G\geq a\}),
\end{equation}
for any $a>0$,
where $\mu_{\mathbb{R}}$ is the Lebesgue measure on $\mathbb{R}$.
Moreover, we have
$$\mu_{\mathbb{R}}(\{t|F_{G}(t)> a\})=\lambda_{n}(\{G> a\}).$$
\end{Lemma}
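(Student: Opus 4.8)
The plan is to recognize $F_G$ as (essentially) the decreasing rearrangement of $G$ and to verify the level-set identity \eqref{equ:level} directly from the definition, exploiting the monotonicity of $F_G$ already established. Set $\phi(a):=\lambda_n(\{G\geq a\})$ for $a>0$; this is a nonincreasing, right-continuous function of $a$ by monotone convergence. The key observation is that the defining supremum $F_G(t)=\sup\{a:\phi(a)\geq t\}$ makes $F_G$ the generalized inverse of $\phi$, so the two functions are ``inverse'' in the sense of distribution functions. Concretely, I would first prove the pointwise equivalence
$$F_G(t)\geq a \iff \phi(a)\geq t,$$
valid for all $a>0$ and $t\in(0,\lambda_n(\Omega)]$. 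The implication $\Leftarrow$ is immediate: if $\phi(a)\geq t$ then $a$ itself belongs to the set whose supremum defines $F_G(t)$, hence $F_G(t)\geq a$. For $\Rightarrow$, suppose $F_G(t)\geq a$. If $F_G(t)>a$, pick $a'$ with $a<a'<F_G(t)$; by definition of supremum there is $a''\geq a'$ with $\phi(a'')\geq t$, and since $\phi$ is nonincreasing and $a<a''$ we get $\phi(a)\geq\phi(a'')\geq t$. If $F_G(t)=a$ exactly, take a sequence $a_k\uparrow a$ with, for each $k$, some $a_k''\geq a_k$ satisfying $\phi(a_k'')\geq t$; then $a_k\leq a_k''$ gives $\phi(a_k)\geq t$ (using $a_k\le a$ is not needed, only $a_k\le a_k''$), and letting $k\to\infty$ and using right-continuity of $\phi$ at $a$...

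Here lies the only delicate point, and I expect it to be the main obstacle: the boundary case $F_G(t)=a$ requires knowing that $\phi$ does not drop strictly at $a$, which is exactly what right-continuity of $a\mapsto\phi(a)$ supplies, since $\phi(a)=\lim_{a'\uparrow a}\phi(a')\wedge\text{(nothing)}$ — more precisely one uses $\phi(a^-)\ge t$ together with the fact that we actually need $\phi(a)\ge t$, which may fail. I would resolve this cleanly by instead proving the two stated identities in the order: first $\mu_{\mathbb R}(\{t:F_G(t)>a\})=\lambda_n(\{G>a\})$ via the clean equivalence $F_G(t)>a\iff \exists\,a'>a,\ \phi(a')\geq t\iff \phi(a^+\text{-side})\dots$, and then obtain \eqref{equ:level} by monotone limits, letting $a'\downarrow a$ on both sides and invoking continuity of measure along monotone sequences of sets. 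Once the set equality $\{t:F_G(t)\geq a\}=\{t:\phi(a)\geq t\}=(0,\phi(a)]$ (up to an endpoint of measure zero) is in hand, the Lebesgue measure of the left side is simply $\phi(a)=\lambda_n(\{G\geq a\})$, which is the assertion; the ``Moreover'' statement follows the same way with strict inequalities, or by taking $a'\downarrow a$ and using that $\lambda_n(\{G>a\})=\lim_{a'\downarrow a}\lambda_n(\{G\ge a'\})$.

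A secondary point to watch is that $F_G(t)$ is finite and well-defined for every $t$ in the stated range — this has already been verified in the excerpt — and that the set $\{t:F_G(t)\geq a\}$ is automatically an interval of the form $(0,s]$ or $(0,s)$ because $F_G$ is nonincreasing, so its measure is just its right endpoint; this reduces the whole lemma to identifying that endpoint with $\phi(a)$, which is precisely the equivalence proved above. No deep input is needed beyond the monotone convergence theorem and the order-reversing duality between a nonincreasing function and its generalized inverse; the argument is elementary measure theory, and the entire content is bookkeeping with suprema and one-sided limits.
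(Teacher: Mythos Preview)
Your overall strategy is correct and matches the paper's: both reduce the lemma to the set identity $\{t:F_G(t)\ge a\}=(0,\phi(a)]$ where $\phi(a):=\lambda_n(\{G\ge a\})$, equivalently to the equivalence $F_G(t)\ge a\iff \phi(a)\ge t$. You also correctly isolate the only nontrivial step, namely the forward implication in the boundary case $F_G(t)=a$.

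Where you slip is in the direction of continuity. The function $\phi$ is \emph{left}-continuous, not right-continuous: $\bigcap_{a'<a}\{G\ge a'\}=\{G\ge a\}$ is a decreasing intersection of sets of finite measure, so $\phi(a^-)=\phi(a)$, whereas $\phi(a^+)=\lambda_n(\{G>a\})$ can be strictly smaller. Left-continuity is exactly what dissolves the boundary case: setting $a_0:=F_G(t)$, every $a_1<a_0$ satisfies $\phi(a_1)\ge t$ by definition of the supremum, and letting $a_1\uparrow a_0$ gives $\phi(a_0)\ge t$; since $a\le a_0$ and $\phi$ is nonincreasing, $\phi(a)\ge t$. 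This is precisely the paper's argument (stated there by contradiction), and it yields the $\ge$ identity directly; the paper then deduces the strict version by the easy limit $\{F_G>a\}=\bigcup_k\{F_G\ge a+1/k\}$, $\{G>a\}=\bigcup_k\{G\ge a+1/k\}$.

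Your alternative route---prove the strict-inequality identity first and then pass to a limit---would also work, but the limit must go the other way: one recovers $\{F_G\ge a\}=\bigcap_{a'<a}\{F_G>a'\}$ and $\{G\ge a\}=\bigcap_{a'<a}\{G>a'\}$ by sending $a'\uparrow a$, not $a'\downarrow a$. (Your final remark, that the strict version follows from the $\ge$ version by $a'\downarrow a$, is correct; it is the passage in the opposite direction that was misstated.)
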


\begin{proof}
Since
$$\mu_{\mathbb{R}}(\{t|F_{G}(t)> a\})=\lim_{k\to+\infty}\mu_{\mathbb{R}}(\{t|F_{G}(t)\geq a+\frac{1}{k}\}),$$
and
$$\lambda_{n}(\{G> a\})=\lim_{k\to+\infty}\lambda_{n}(\{G\geq a+\frac{1}{k}\}),$$
we only need to prove
$$\mu_{\mathbb{R}}(\{t|F_{G}(t)\geq a\})=\lambda_{n}(\{G\geq a\}),$$
for any $a>0$,

Note that $$\sup\{a_{1}|\lambda_{n}(\{G\geq a_{1}\})\geq\lambda_{n}(\{G\geq a\})\}\geq a.$$
Then we have
$$\lambda_{n}(\{G\geq a\})\in\{t|\sup\{a_{1}|\lambda_{n}(\{G\geq a_{1}\})\geq t\}\geq a\},$$
therefore
$$\{t|\sup\{a_{1}|\lambda_{n}(\{G\geq a_{1}\})\geq t\}\geq a\}\supseteq\{t|\lambda_{n}(\{G\geq a\})\geq t\},$$
where $t>0$.

If $"\supseteq"$ in the above relation is strictly $"\supset"$,
then there exists $t_{0}$, such that

1). $\sup\{a_{1}|\lambda_{n}(\{G\geq a_{1}\})\geq t_{0}\}\geq a$;

2). $t_{0}>\lambda_{n}(\{G\geq a\})$.

Let
$$a_{0}:=\sup\{a_{1}|\lambda_{n}(\{G\geq a_{1}\})\geq t_{0}\}\geq a.$$
Note that
$$\lambda_{n}(\cap_{a_{1}<a_{0}}\{G\geq a_{1}\})=\inf_{a_{1}<a_{0}}\lambda_{n}(\{G\geq a_{1}\}).$$
Then we have $\lambda_{n}(\{G\geq a_{0}\})\geq t_{0}$.

As $t_{0}>\lambda_{n}(\{G\geq a\})$,
we have
$$\lambda_{n}(\{G\geq a_{0}\})>\lambda_{n}(\{G\geq a\}),$$
which is a contradiction to
$$a_{0}\geq a.$$

Then the following holds:
\begin{equation}
\label{equ:0807a}
\{t|\sup\{a_{1}|\lambda_{n}(\{G\geq a_{1}\})\geq t\}\geq a\}=\{t|\lambda_{n}(\{G\geq a\})\geq t\},
\end{equation}
where $t>0$.

According to the definition of $F_{G}$ and equality \ref{equ:0807a},
it follows that
$$\{t|F_{G}(t)\geq a\}
=\{t|\sup\{a_{1}|\mu(\{G\geq a_{1}\})\geq t\}\geq a\}=\{t|\lambda_{n}(\{G\geq a\})\geq t\},$$
where $t>0$.

Note that
$$\mu_{\mathbb{R}}(\{t|F_{G}(t)\geq a\})=\mu_{\mathbb{R}}\{t|\lambda_{n}(\{G\geq a\})\geq t\}=\lambda_{n}(\{G\geq a\}),$$
for $t>0$.
We have thus proved the present lemma.
\end{proof}

Denote by
$$s(y):=y^{-1}(-\log y)^{-1},$$
where $y\in (0,e^{-1})$.
It is clear that $s$ is strictly decreasing on $(0,e^{-1})$.

We define a function $u$ by
$$u(s(y))=y^{-1},$$
where
$u\in C^{\infty}((e,+\infty))$.
It is clear that $u$ is strictly increasing on $(e,+\infty)$,

The second lemma is about the measure of the level set of $G$:

\begin{Lemma}
\label{l:open_a}
For $A>e$, we have
$$\liminf_{A\to+\infty}\lambda_{n}(\{G> A\})u(A)=0,$$
where
$\lambda_{n}$ is the Lebesgue measure of $\mathbb{C}^{n}$.
Especially,
$\lim_{A\to+\infty}\frac{A}{u(A)}=0$.
\end{Lemma}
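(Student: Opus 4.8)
The plan is to establish the two assertions separately: the ``especially'' clause follows from a one-line computation with the substitution $A=s(y)$, while the main $\liminf$ statement will be proved by contradiction against the hypothesis $\int_{\Omega}G\,d\lambda_{n}<+\infty$.

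For the auxiliary limit, write $A=s(y)$ with $y\in(0,e^{-1})$; the defining relation $u(s(y))=y^{-1}$ gives $A/u(A)=y\cdot s(y)=(-\log y)^{-1}$. Since $s$ is a strictly decreasing bijection of $(0,e^{-1})$ onto $(e,+\infty)$, the condition $A\to+\infty$ is equivalent to $y\to0^{+}$, so $A/u(A)=(-\log y)^{-1}\to0$, which is the ``especially'' part.

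For the main statement, suppose to the contrary that $\liminf_{A\to+\infty}\lambda_{n}(\{G>A\})u(A)>0$. Then there are $\delta>0$ and $A_{0}>e$ such that $\lambda_{n}(\{G>A\})\geq\delta/u(A)$ for every $A>A_{0}$. Using the layer-cake formula (equivalently, combining Lemma \ref{l:level} with the rearrangement identity $\int_{\Omega}G\,d\lambda_{n}=\int_{0}^{\lambda_{n}(\Omega)}F_{G}(t)\,dt$), we get
\[
+\infty>\int_{\Omega}G\,d\lambda_{n}=\int_{0}^{+\infty}\lambda_{n}(\{G>a\})\,da\geq\delta\int_{A_{0}}^{+\infty}\frac{da}{u(a)},
\]
so it suffices to show that $\int_{A_{0}}^{+\infty}\frac{da}{u(a)}=+\infty$ to reach a contradiction. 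Substituting $a=s(y)$ we have $1/u(a)=y$ and $da=s'(y)\,dy$ with $s'(y)=y^{-2}(-\log y)^{-2}(1+\log y)$, which is negative on $(0,e^{-1})$; as $a$ increases from $A_{0}$ to $+\infty$, $y$ decreases from $y_{0}:=s^{-1}(A_{0})$ to $0$. Hence $\int_{A_{0}}^{+\infty}\frac{da}{u(a)}=\int_{0}^{y_{0}}y\,|s'(y)|\,dy=\int_{0}^{y_{0}}y^{-1}(-\log y)^{-2}\big((-\log y)-1\big)\,dy$, and since $(-\log y)-1\geq\tfrac12(-\log y)$ for $y$ small, the integrand is bounded below by $\tfrac12\,y^{-1}(-\log y)^{-1}$ near $0$, whose integral diverges (substitute $v=-\log y$). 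This contradicts $\int_{\Omega}G\,d\lambda_{n}<+\infty$ and proves the lemma.

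The only genuine point to watch is this last divergence: the weight $u$ is calibrated exactly so that $1/u(a)$ decays like $1/(a\log a)$ as $a\to+\infty$, i.e.\ it lies precisely on the borderline of integrability at $+\infty$, so the computation must be carried out carefully enough to conclude divergence rather than convergence. Once that is in place, the rest is routine bookkeeping with the layer-cake formula and the monotonicity of $s$.
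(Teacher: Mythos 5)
Your proof is correct, and it takes a cleaner, more direct route than the paper. For the ``especially'' clause you use the same substitution $A=s(y)$, $A/u(A)=(-\log y)^{-1}\to 0$, which is identical to the paper's argument with $y=t_A$. For the main $\liminf$ statement the paper instead goes through the decreasing rearrangement $F_G$: it invokes Lemma \ref{l:level} to get $\int_0^{\mu(\Omega)}F_G(t)\,dt=\int_\Omega G\,d\lambda_n<\infty$, extracts a sequence $t_j\to 0$ with $F_G(t_j)\cdot t_j(-\log t_j)\to 0$, then proves the auxiliary estimate $u(F_G(t_j))=o(t_j^{-1})$ (by a separate small contradiction argument) before combining it with $\lambda_n(\{G>F_G(t_j)\})\leq t_j$. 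You instead negate the conclusion to obtain $\lambda_n(\{G>a\})\geq\delta/u(a)$ for $a>A_0$, feed this into the layer-cake formula $\int_\Omega G\,d\lambda_n=\int_0^\infty\lambda_n(\{G>a\})\,da$, and derive the contradiction from the divergence of $\int^{+\infty}\frac{da}{u(a)}$, which you verify correctly via $a=s(y)$, $y|s'(y)|\geq\tfrac12\,y^{-1}(-\log y)^{-1}$ near $0$, and $\int_0 y^{-1}(-\log y)^{-1}dy=+\infty$. Both proofs rest on precisely the same fact --- $1/u(a)\sim 1/(a\log a)$ sits on the borderline of non-integrability at $+\infty$ --- but your version dispenses with the rearrangement bookkeeping and the intermediate $o(t_j^{-1})$ step, at the modest cost of invoking the (standard) layer-cake identity in place of the paper's Lemma \ref{l:level}.
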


\begin{proof}
According to the definition of Lebesgue integration and Lemma \ref{l:level},
it follows that
$$\int_{0}^{\mu(\Omega)}F_{G}(t)dt=\int_{\Omega}Gd\lambda_{n}<+\infty.$$
Then we have
$$\liminf_{t\to 0}\frac{F_{G}(t)}{t^{-1}(-\log t)^{-1}}=0,$$
which implies that there exists $t_{j}\to 0$, when $j\to+\infty$,
such that
\begin{equation}
\label{equ:lim_F_G}
\lim_{j\to+\infty}\frac{F_{G}(t_{j})}{t_{j}^{-1}(-\log t_{j})^{-1}}=0.
\end{equation}

Using Lemma \ref{l:level},
we have
$$\lambda_{n}(\{G> F_{G}(t_{j})\})=\mu_{\mathbb{R}}(\{t|F_{G}(t)> F_{G}(t_{j})\})\leq\mu_{\mathbb{R}}((0,t_{j}))=t_{j}.$$

We now want to prove that $u(F_{G}(t_{j}))=o(t_{j}^{-1})$ by contradiction:
if not, there exists $\varepsilon_{0}>0$,
such that $u(F_{G}(t_{j}))\leq \varepsilon_{0}t_{j}^{-1}$.
However,
$$u(F_{G}(t_{j}))\leq \varepsilon_{0}t_{j}^{-1}=u(\frac{1}{\frac{t_{j}}{\varepsilon_{0}}(-\log\frac{t_{j}}{\varepsilon_{0}})}).$$

According to the strictly increasing property of $u$,
it follows that $F_{G}(t_{j})\leq \frac{1}{\frac{t_{j}}{\varepsilon_{0}}(-\log\frac{t_{j}}{\varepsilon_{0}})}$,
which is contradict to equality \ref{equ:lim_F_G}
because
$\lim_{t_{j}\to0}\frac{t_{j}(-\log t_{j})}{\frac{t_{j}}{\varepsilon_{0}}(-\log\frac{t_{j}}{\varepsilon_{0}})}=\varepsilon_{0}$.
Now we obtain $u(F_{G}(t_{j}))=o(t_{j}^{-1})$.

Then we have
$$\lim_{j\to+\infty}\mu(\{G> F_{G}(t_{j})\})u(F_{G}(t_{j}))\leq\lim_{j\to+\infty}t_{j}o(t_{j}^{-1})=0.$$

Note that if $F_{G}(t_{j})$ is bounded above,
when $t_{j}$ to $0$,
then $G$ has positive upper bound.
Therefore $\mu(\{G> A\})=0$, for $A$ large enough.

Then we have proved
$$\liminf_{A\to+\infty}\lambda_{n}(\{G> A\})u(A)=0.$$

As $\frac{1}{t(-\log t)}$ is strictly decreasing on $(0,e^{-1})$,
then for any $A>e$,
there exists $t_{A}$,
such that

1). $\frac{1}{t_{A}(-\log t_{A})}=A$;

2). $t_{A}$ goes to zero, when $A$ goes to $+\infty$.

As
$$\frac{A}{u(A)}=\frac{\frac{1}{t_{A}(-\log t_{A})}}{u(\frac{1}{t_{A}(-\log t_{A})})}=\frac{\frac{1}{t_{A}(-\log t_{A})}}{\frac{1}{t_{A}}}=\frac{1}{-\log t_{A}},$$
then we obtain
$$\lim_{A\to+\infty}\frac{A}{u(A)}=0,$$
by the above property $2)$ of $t_{A}$.

The present lemma is thus proved.

\end{proof}

\subsection{Estimation of integration of holomorphic functions on singular Riemann
surfaces}

\begin{Lemma}
\label{l:open_b}
Let $h\not\equiv0$ be a holomorphic function on the disc $\Delta_{r}$ in $\mathbb{C}$.
Let $f_{a}$ be a holomorphic function on $\Delta_{r}$,
which satisfies
$f|_{o}=0$ and $f_{a}(b)=1$ for any $b^{k}=a$ ($k$ is a positive integer),
then we have
$$\int_{\Delta_{r}}|f_{a}|^{2}|h|^{2}d\lambda_{1}>C_{1}|a|^{-2},$$
where $a\in\Delta_{r}$ whose norm is small enough,
$k$ is a positive integer,
$C_{1}$ is a positive constant independent of $a$ and $f_{a}$.
\end{Lemma}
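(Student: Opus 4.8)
The plan is to reduce the estimate to the well-understood behavior of a holomorphic function near its zero of some order $m$ at the origin. Write $h(z) = z^m \tilde h(z)$ with $\tilde h$ holomorphic, $\tilde h(o)\neq 0$; shrinking $r$ if necessary we may assume $|\tilde h|^2 \geq c_0 > 0$ on $\overline{\Delta_r}$. The only information we have on $f_a$ is that it vanishes at $o$ and equals $1$ at each of the $k$ points $b$ with $b^k = a$. For $|a|$ small these $k$ roots lie on the circle $|z| = |a|^{1/k}$, which is itself small, so all the relevant action happens on a small neighborhood of $o$. The idea is to exploit the mean value property of the holomorphic function $f_a h$ (or rather of $f_a$ times an appropriate power of $z$) on small circles and compare with the sum of its values at the $k$-th roots of $a$.

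First I would localize: pick $\rho$ with $|a|^{1/k} < \rho < r$ fixed small, and compare $\int_{\Delta_r}|f_a|^2|h|^2\,d\lambda_1$ from below by $c_0\int_{\Delta_\rho}|f_a|^2 |z|^{2m}\,d\lambda_1$. Expand $f_a(z) = \sum_{j\geq 1} c_j z^j$ (no constant term since $f_a(o)=0$). On the circle $|z| = s$, Parseval gives $\int_{|z|=s}|f_a|^2 |z|^{2m}\,|dz| = 2\pi s^{2m+1}\sum_{j\geq 1}|c_j|^2 s^{2j}$, and integrating in $s$ from $0$ to $\rho$ yields $\pi\sum_{j\geq 1}\frac{|c_j|^2}{m+j+1}\rho^{2(m+j+1)}$. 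So the left side is bounded below by a constant times $\sum_{j\geq 1}|c_j|^2\rho^{2(m+j+1)}$. Second, I would extract information from the normalization $f_a(b) = 1$: averaging over the $k$ roots $b_1,\dots,b_k$ of $a$ gives $\frac{1}{k}\sum_{\ell}f_a(b_\ell) = 1$, and since $\frac{1}{k}\sum_\ell b_\ell^j = a^{j/k}$ when $k\mid j$ and $0$ otherwise, this reads $\sum_{k\mid j} c_j\, a^{j/k} = 1$, i.e. $\sum_{i\geq 1} c_{ki}\, a^{i} = 1$. Cauchy–Schwarz on this identity gives $1 = \big|\sum_i c_{ki}a^i\big|^2 \leq \big(\sum_i |c_{ki}|^2 \rho^{2(m+ki+1)}\big)\big(\sum_i |a|^{2i}\rho^{-2(m+ki+1)}\big)$.

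The main obstacle is controlling the second factor $\sum_{i\geq 1}|a|^{2i}\rho^{-2(m+ki+1)}$ so that it is $O(|a|^2)$, which is exactly what converts the lower bound on $\sum|c_j|^2\rho^{2(\cdots)}$ into the desired $\gtrsim |a|^{-2}$. Factoring out $\rho^{-2(m+1)}$, the sum is $\rho^{-2(m+1)}\sum_{i\geq 1}\big(|a|^2\rho^{-2k}\big)^i$; for $|a|$ small enough (depending only on $\rho,k$, hence only on $r,k$) we have $|a|^2\rho^{-2k} \leq \tfrac12$, so the geometric series converges and the whole quantity is $\leq 2\rho^{-2(m+2k+2)}|a|^2 \cdot (\text{const})$ — in fact it is $\asymp |a|^2$ with constants depending only on $m,k,\rho$. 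Combining: $1 \leq C\,|a|^2 \cdot \int_{\Delta_r}|f_a|^2|h|^2\,d\lambda_1$ with $C$ depending only on $h$ (through $m,c_0$), $k$, and $r$ — but not on $a$ or on the particular choice of $f_a$. Rearranging gives $\int_{\Delta_r}|f_a|^2|h|^2\,d\lambda_1 > C_1 |a|^{-2}$, which is the claim. The one point needing care is that $m$ and hence all constants depend on $h$ but the statement only asserts independence from $a$ and $f_a$, so this is fine; I would also note the argument is uniform in $k$ only if we accept constants depending on $k$, which the statement allows.
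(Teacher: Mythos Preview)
Your approach is essentially the same as the paper's: factor $h=z^{m}\tilde h$, reduce to $h=z^{m}$, Taylor-expand $f_a$, average the constraint $f_a(b)=1$ over the $k$-th roots of $a$ to get $\sum_{i\ge 1} c_{ki}a^{i}=1$, and then apply Cauchy--Schwarz against a geometric series in $|a|^{2}$.

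There is, however, a genuine gap in one step. You write that the integral
\[
\pi\sum_{j\ge 1}\frac{|c_j|^{2}}{m+j+1}\,\rho^{2(m+j+1)}
\]
is ``bounded below by a constant times $\sum_{j\ge 1}|c_j|^{2}\rho^{2(m+j+1)}$''. This is false: the factor $\tfrac{1}{m+j+1}$ is not bounded below by any positive constant as $j\to\infty$. Two easy repairs are available. Either (as the paper does) keep the weight $\tfrac{1}{m+ki+1}$ in the Cauchy--Schwarz inequality, so that the second factor becomes $\sum_{i\ge 1}(m+ki+1)\,|a|^{2i}\rho^{-2(m+ki+1)}$; this is still $O(|a|^{2})$ because $\sum_{i\ge 1}(m+ki+1)x^{i}$ converges for $x<1$. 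Or, more in the spirit of what you wrote, pick a second radius $\rho'<\rho$ and observe that $\tfrac{\rho^{2(m+j+1)}}{m+j+1}\ge c(\rho,\rho')\,(\rho')^{2(m+j+1)}$ for all $j\ge 1$, then run your Cauchy--Schwarz with $\rho'$ in place of $\rho$. Either fix closes the argument with constants depending only on $h$, $r$, and $k$, as required.
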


\begin{proof}
As $h\not\equiv0$,
we may write $h=z^{i}h_{1}$ near $o$,
where $h_{1}|_{o}\neq 0$.
Then there exists $r'<r$,
such that $h_{1}|_{\Delta_{r'}}\geq C_{0}>0$.
Therefore it suffices to consider the case that $h=z^{i}$ on $\Delta_{r'}$.

By Taylor expansion at $o$,
we have
$$f(z)=\sum_{j=1}^{\infty}c_{j}z^{j}.$$

As $f(b)=1$,
then
$$\sum_{j=1}^{\infty}c_{kj}a^{j}=\frac{1}{k}\sum_{1\leq l\leq k}\sum_{j=1}^{\infty}c_{j}b_{l}^{j}=1$$
where $b_{l}^{k}=a$,
and $\sum_{1\leq l\leq k} b_{l}^{j}=0$ when $0<j<k$.

It is clear that
\begin{equation}
\label{equ:0808a}
\int_{\Delta_{r'}}|f_{a}|^{2}|h|^{2}d\lambda_{1}=\int_{\Delta_{r'}}|f_{a}|^{2}|z^{i}|^{2}d\lambda_{1}=
2\pi\sum_{j=1}^{\infty}|c_{j}|^{2}\frac{{r'}^{2j+2i+2}}{2j+2i+2}
\end{equation}

By Schwartz Lemma,
we have
\begin{equation}
\label{}
\begin{split}
&(\sum_{j=1}^{\infty}|c_{j}|^{2}\frac{{r'}^{2j+2i+2}}{2j+2i+2})(\sum_{j=1}^{\infty}\frac{2kj+2i+2}{{r'}^{2kj+2i+2}}|a|^{2j})
\\&\geq(\sum_{j=1}^{\infty}|c_{kj}|^{2}\frac{{r'}^{2kj+2i+2}}{2kj+2i+2})(\sum_{j=1}^{\infty}\frac{2kj+2i+2}{{r'}^{2kj+2i+2}}|a|^{2j}) \geq|\sum_{j=1}^{\infty}c_{kj}a^{j}|^{2}=1.
\end{split}
\end{equation}

Note that
$$\sum_{j=1}^{\infty}\frac{2kj+2i+2}{{r'}^{2kj+2i+2}}|a|^{2j}
=|\frac{a}{{r'}^{k}}|^{2}((2i+2)\frac{{r'}^{-2i-2}}{1-|\frac{a}{{r'}^{k}}|^{2}}+2k\frac{{r'}^{-2i-2}}{(1-|\frac{a}{{r'}^{k}}|^{2})^{2}}),$$
and $((2i+2)\frac{{r'}^{-2i-2}}{1-|\frac{a}{{r'}^{k}}|^{2}}+2k\frac{{r'}^{-2i-2}}{(1-|\frac{a}{{r'}^{k}}|^{2})^{2}})$ has uniform upper bound
independent of $a$,
when $|a|<\frac{{r'}^{k}}{2}$.
The Lemma thus follows.

\end{proof}

Let¡¯s recall the local parametrization theorem:

\begin{Theorem}
\label{t:para}(see \cite{demailly-book})
Let $\mathscr{J}$ be a prime ideal of $\mathcal{O}_{n}$ and
let $\mathcal{C}'=V(\mathscr{J})$ be an analytic curve at $o$.
Then the ring $\mathcal{O}_{n}/\mathscr{J}$ is a finite integral extension of $\mathcal{O}_{d}$;
let $q$ be the degree of the extension.
There exists a local coordinates
$$(z';z'')=(z_{1};z_{2},\cdots,z_{n}),$$
such that
if $\Delta'_{r'}$ and $\Delta''_{r''}$ are polydisks of sufficient small
radii $r'$ and $r''$ and if $r'\leq \frac{r''}{C}$ with $C$ large,
the projective map $\pi':\mathcal{C}'\cap (\Delta'_{r'}\times\Delta''_{r''})\to\Delta'_{r'}$
is a ramified covering with $q$ sheets, whose ramification locus
is contained in $S=\{o'\}\subset\Delta'_{r'}$.
This means that

a), the open set $\mathcal{C}'_{S}:=\mathcal{C}'\cap ((\Delta'_{r'}\setminus S)\times\Delta''_{r''})$
is a smooth $1-$dimensional manifold, dense in $\mathcal{C}'\cap(\Delta'_{r'}\times\Delta''_{r''})$;

b), $\pi':\mathcal{C}'_{S}\to\Delta'_{r'}\setminus S$ is an unramified covering;

c), the fibre $\pi^{'-1}(z')$ have exactly $q$ elements if $z'\in \Delta'\setminus S$ and at most $q$ if $z'\in S$.

Moreover, $\mathcal{C}'_{S}$ is a connected covering of $\Delta'_{r'}\setminus S$,
and $\mathcal{C}'\cap(\Delta'_{r'}\times\Delta''_{r''})$ is contained in a cone $|z''|\leq\frac{C}{6}|z'|$.
\end{Theorem}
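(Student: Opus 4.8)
The plan is to run the classical argument establishing a Noether normalization of the local analytic ring $\mathcal{O}_{n}/\mathscr{J}$, based on a generic linear change of coordinates together with the Weierstrass preparation theorem and the Riemann removable singularity theorem. Since $\mathscr{J}$ is prime and $\mathcal{C}'=V(\mathscr{J})$ is a curve at $o$, we have $d=1$, so the target ring is $\mathcal{O}_{d}=\mathcal{O}_{1}$. First I would fix the coordinates $(z';z'')=(z_{1};z_{2},\dots ,z_{n})$. Because $\mathcal{C}'$ is $1$-dimensional, a generic linear functional $z_{1}$ is non-constant on $\mathcal{C}'$ and its zero set meets $\mathcal{C}'$ at the isolated point $o$; after a generic linear change of all coordinates I may moreover assume that each of $z_{1},\dots ,z_{n}$ vanishes to order exactly $m$ along the (single, by irreducibility) branch of $\mathcal{C}'$, where $m$ is the multiplicity of $\mathcal{C}'$ at $o$, and that the local degree $q$ of $z_{1}|_{\mathcal{C}'}$ equals $m$. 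Using $\mathcal{C}'\cap(\{o'\}\times\Delta''_{r''})=\{o\}$ and compactness, for suitable radii with $r'\le r''/C$ one gets $\mathcal{C}'\cap(\bar{\Delta}'_{r'}\times\partial\Delta''_{r''})=\emptyset$, so the restriction $\pi'\colon \mathcal{C}'\cap(\Delta'_{r'}\times\Delta''_{r''})\to\Delta'_{r'}$ is proper with finite fibres (a fibre is a compact analytic subset of $\Delta''_{r''}$, hence finite).

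A proper holomorphic map with finite fibres onto the one-dimensional polydisc $\Delta'_{r'}$ is a branched covering: the cardinality of the fibres is locally constant off a proper analytic — hence, in one variable, discrete — subset, which after shrinking $r'$ becomes $S=\{o'\}$; this gives (a), (b), (c) with $q$ sheets, density of $\mathcal{C}'_{S}$ being immediate. Connectedness of $\mathcal{C}'_{S}$ follows from the irreducibility of $\mathcal{C}'$, i.e.\ from $\mathscr{J}$ being prime: a splitting of $\mathcal{C}'_{S}$ into two nonempty open-and-closed pieces would, by taking closures, split $\mathcal{C}'$. For the finiteness of the extension $\mathcal{O}_{1}\hookrightarrow\mathcal{O}_{n}/\mathscr{J}$ (injectivity holds because no nonzero $g(z_{1})$ vanishes on the irreducible curve $\mathcal{C}'$ through $o$ unless $\mathcal{C}'\subseteq\{z_{1}=0\}$), for each $j\ge 2$ I would set $P_{j}(z_{1},w):=\prod_{x\in(\pi')^{-1}(z_{1})}(w-z_{j}(x))$; its coefficients are elementary symmetric functions of the $z_{j}$-values over the $q$ sheets, holomorphic and bounded on $\Delta'_{r'}\setminus S$, hence extend holomorphically across the discrete set $S$. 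Thus $P_{j}\in\mathcal{O}_{1}[w]$ is monic of degree $q$ and $P_{j}(z_{1},z_{j})$ vanishes on $\mathcal{C}'$, so $P_{j}(z_{1},z_{j})\in\sqrt{\mathscr{J}}=\mathscr{J}$ (alternatively produce such a relation via Weierstrass preparation applied to an element of $\mathscr{J}$). Hence $\mathcal{O}_{n}/\mathscr{J}$ is a quotient of the finite $\mathcal{O}_{1}$-module $\mathcal{O}_{1}[w_{2},\dots ,w_{n}]/(P_{2},\dots ,P_{n})$; that the degree of the extension is exactly $q$ follows by a primitive-element argument, choosing a generic linear combination of $z_{2},\dots ,z_{n}$ whose $q$ values on a generic fibre are pairwise distinct.

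For the cone estimate, write $P_{j}(z_{1},w)=w^{q}+a_{1,j}(z_{1})w^{q-1}+\cdots +a_{q,j}(z_{1})$. Since $z_{j}$ vanishes on $\mathcal{C}'\cap\{z_{1}=0\}=\{o\}$ we get $P_{j}(0,w)=w^{q}$, i.e.\ $a_{i,j}(0)=0$, hence $|a_{i,j}(z_{1})|\le C|z_{1}|$ near $0$; estimating the roots of $P_{j}(z_{1},\cdot)$ yields the crude bound $|z_{j}|\le C|z_{1}|^{1/q}$ on $\mathcal{C}'$. To improve the exponent to $1$ I would invoke the Puiseux parametrization $t\mapsto(z_{1},z'')$ of the single branch: after normalizing the parameter so that $z_{1}=t^{q}$, the genericity of the coordinate choice gives $z_{j}=t^{m_{j}}u_{j}(t)$ with $m_{j}\ge m=q$ and $u_{j}$ holomorphic, whence $|z''|\le C|t|^{q}\le C'|z'|$ on $\mathcal{C}'\cap(\Delta'_{r'}\times\Delta''_{r''})$; shrinking $r'$ relative to $r''$ (taking $r'\le r''/C$ with $C$ large) replaces $C'$ by $\frac{C}{6}$.

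The hard part will be the bookkeeping of the genericity hypotheses in the first step: one must simultaneously arrange that $\{z_{1}=0\}$ meets $\mathcal{C}'$ only at $o$ (for properness of $\pi'$) and that $z_{1}$ realizes the multiplicity $m$ along $\mathcal{C}'$ while the remaining coordinates vanish to order $\ge m$ — it is precisely this that converts the crude estimate $|z_{j}|\le C|z_{1}|^{1/q}$ into the linear cone bound $|z''|\le\frac{C}{6}|z'|$. The two other delicate points, both standard, are the removable-singularity step making the symmetric functions genuinely holomorphic across $S$, and the identification of the number of sheets $q$ with the degree of the field extension induced by $\mathcal{O}_{1}\hookrightarrow\mathcal{O}_{n}/\mathscr{J}$.
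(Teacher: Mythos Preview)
The paper does not give its own proof of this theorem: it is quoted verbatim as the local parametrization theorem with the citation ``(see \cite{demailly-book})'' and is used as a black box. There is therefore nothing in the paper to compare your argument against.

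That said, your outline is the standard proof one finds in Demailly's book (and in Grauert--Remmert, Gunning--Rossi, etc.): generic linear coordinates to make $\{z_{1}=0\}\cap\mathcal{C}'=\{o\}$, properness of the projection from $\mathcal{C}'\cap(\bar\Delta'_{r'}\times\partial\Delta''_{r''})=\emptyset$, the branched-covering structure over the punctured disc, integrality of $z_{2},\dots,z_{n}$ over $\mathcal{O}_{1}$ via symmetric functions and Riemann extension, and finally the cone estimate. One small wording issue: you first say you arrange that \emph{each} of $z_{1},\dots,z_{n}$ vanishes to order \emph{exactly} $m$ along the branch, but for the cone bound what you actually need (and later use) is $\mathrm{ord}_{t}\,z_{1}=q=m$ and $\mathrm{ord}_{t}\,z_{j}\ge m$ for $j\ge 2$; the genericity that guarantees this is that the tangent direction of the branch is not contained in any coordinate hyperplane $\{z_{j}=0\}$ and that $z_{1}$ is transverse to it. With that clarified, your sketch matches the classical argument.
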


Let $\Delta'$ and $\Delta''$ be unit disc with coordinates
$(z_{1})$ and unit polydisc with coordinates $(z_{2},\cdots,z_{n})$ respectively.

Let
$$\pi:\Delta'\times\Delta''\to \Delta'$$
be the projective map which is given by
$$\pi(z_{1};z_{2},\cdots,z_{n})=z_{1}.$$

We have the following Remark of Theorem \ref{t:para}.

\begin{Remark}
\label{r:para}
Let $\mathscr{J}$ be a prime ideal of $\mathcal{O}_{n}$ and
let $\mathcal{C}'=V(\mathscr{J})$ be an analytic curve at $o$.
Then the ring $\mathcal{O}_{n}/\mathscr{J}$ is a finite integral extension of $\mathcal{O}_{d}$;
let $q$ be the degree of the extension,
there exists a biholomorphic map $j$ from
a neighborhood of $\overline{\Delta'\times\Delta''}$
to a neighborhood  $U_{o}$ of $o$,
such that the projective map $\pi|_{\mathcal{C}\cap (\Delta'\times\Delta'')}\to\Delta'$
is a ramified covering with $q$ sheets, whose ramification locus
is contained in $S=\{o'\}\subset\Delta'$
where
$$\mathcal{C}:=j^{-1}(\mathcal{C}').$$
This means that

a), the open set $\mathcal{C}_{S}:=\mathcal{C}\cap ((\Delta'\setminus S)\times\Delta'')$
is a smooth $1-$dimensional manifold, dense in $\mathcal{C}\cap(\Delta'\times\Delta'')$;

b), $\pi|_{\mathcal{C}_{S}}:\mathcal{C}_{S}\to\Delta'\setminus S$ is an unramified covering;

c), the fibre $\pi^{-1}(z')$ have exactly $q$ elements if $z'\in\Delta'\setminus S$ and at most $q$ if $z'\in S$.

Moreover, $\mathcal{C}_{S}$ is a connected covering of $\Delta'\setminus S$,
and $\mathcal{C}\cap(\Delta'\times\Delta'')$ is contained in a cone $|z''|\leq\frac{1}{6}|z'|$.

\end{Remark}

Using Lemma \ref{l:open_b} and Remark \ref{r:para},
we obtain the following singular version of
Lemma \ref{l:open_b}:

\begin{Lemma}
\label{l:open_sing}
Let $h$ be a holomorphic function on an analytic curve $\mathcal{C}$ as in Remark \ref{r:para}.
Let $f_{a}$ be a holomorphic function on $\mathcal{C}$,
which satisfies
$f(o)=0$ and $f_{a}(\pi^{-1}(a)\cap\mathcal{C})=1$,
then we have
$$\int_{\mathcal{C}_{S}}|f_{a}|^{2}|h|^{2}(\pi|_{\mathcal{C}_{S}})^{*}d\lambda_{\Delta'}>C_{2}|a|^{-2},$$
when $a\in\Delta'$ and $|a|$ is small enough,
where
$C_{2}$ is a positive constant independent of $a$ and $f_{a}$.
\end{Lemma}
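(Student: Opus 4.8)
The plan is to reduce the singular curve $\mathcal{C}$ to the smooth disc $\Delta'$ via the ramified covering $\pi|_{\mathcal{C}}:\mathcal{C}\cap(\Delta'\times\Delta'')\to\Delta'$ with $q$ sheets, and then apply Lemma \ref{l:open_b}. Using the local parametrization in Remark \ref{r:para}, the normalization of $\mathcal{C}\cap(\Delta'\times\Delta'')$ is a disc; more concretely, since $\mathcal{C}_S$ is a connected $q$-sheeted unramified covering of $\Delta'\setminus S$ with $S=\{o'\}$, it is isomorphic (as a covering) to $\Delta_\rho\setminus\{0\}\to\Delta'\setminus\{0\}$, $w\mapsto w^q=z_1$, for a suitable radius $\rho$; this extends to a proper finite map $\nu\colon\Delta_\rho\to\Delta'$, $w\mapsto w^q$, whose image realizes $\mathcal{C}\cap(\Delta'\times\Delta'')$ after composing with the embedding into $\Delta'\times\Delta''$. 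The pullbacks $\tilde h:=\nu^*h$ and $\tilde f_a:=\nu^*f_a$ are then holomorphic functions on $\Delta_\rho$, with $\tilde h\not\equiv0$ (because $h\not\equiv0$ on the irreducible curve $\mathcal{C}$), $\tilde f_a(0)=0$, and $\tilde f_a(w)=1$ for every $w$ with $w^q=a$, i.e. for every $w$ in $\nu^{-1}(a)$, which corresponds exactly to $\pi^{-1}(a)\cap\mathcal{C}$.

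Next I would unwind the change of variables in the integral. Writing $z_1=w^q$, we have $dz_1=q\,w^{q-1}dw$, hence $(\pi|_{\mathcal{C}_S})^*d\lambda_{\Delta'}$ pulls back under $\nu$ to $q^2|w|^{2q-2}d\lambda_1(w)$. Therefore
\begin{equation*}
\int_{\mathcal{C}_S}|f_a|^2|h|^2(\pi|_{\mathcal{C}_S})^*d\lambda_{\Delta'}=\frac{1}{q}\int_{\Delta_\rho}|\tilde f_a|^2|\tilde h|^2\,q^2|w|^{2q-2}\,d\lambda_1(w),
\end{equation*}
the factor $1/q$ arising because $\nu$ is $q$-to-$1$ onto $\mathcal{C}_S$. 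Absorbing $q|w|^{2q-2}$ into a new holomorphic weight, set $H:=q^{1/2}w^{q-1}\tilde h$, so $H\not\equiv0$ is holomorphic on $\Delta_\rho$ and the right-hand side equals $\int_{\Delta_\rho}|\tilde f_a|^2|H|^2\,d\lambda_1$. Now Lemma \ref{l:open_b}, applied on $\Delta_\rho$ with the holomorphic function $H$, the integer $k=q$, and the function $\tilde f_a$ (which vanishes at $o$ and equals $1$ on every $b$ with $b^q=a$), gives $\int_{\Delta_\rho}|\tilde f_a|^2|H|^2\,d\lambda_1>C_1|a|^{-2}$ for all $a\in\Delta'$ with $|a|$ small enough, where $C_1$ is independent of $a$ and of $\tilde f_a$ (hence of $f_a$). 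Taking $C_2:=C_1$ finishes the proof.

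The main obstacle I anticipate is the bookkeeping in identifying the covering $\mathcal{C}_S\to\Delta'\setminus S$ with the standard power map and carrying the holomorphic function $f_a$ (only assumed holomorphic on the possibly singular curve $\mathcal{C}$) up to the normalizing disc $\Delta_\rho$ — one must check that holomorphicity on $\mathcal{C}$ in the sense used here pulls back to genuine holomorphicity on $\Delta_\rho$, and that the fibre condition $f_a(\pi^{-1}(a)\cap\mathcal{C})=1$ translates exactly into the hypothesis $\tilde f_a(b)=1$ for all $b^q=a$ required by Lemma \ref{l:open_b}. The Jacobian factor $|w|^{2q-2}$ and the constant $q$ must be tracked carefully so that the final constant $C_2$ genuinely depends only on $h$ (through $q$ and the vanishing order of $\tilde h$ at $0$) and not on $a$ or $f_a$; since Lemma \ref{l:open_b} already delivers a constant uniform in the auxiliary function, this uniformity is inherited. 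Everything else — the change of variables, the extension of $\nu$ across $0$, and the density of $\mathcal{C}_S$ in $\mathcal{C}\cap(\Delta'\times\Delta'')$ ensuring the integral over $\mathcal{C}_S$ is the right one — is routine once the parametrization from Remark \ref{r:para} is in hand.
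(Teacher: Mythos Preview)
Your approach is essentially the same as the paper's: normalize $\mathcal{C}$ by a disc, compute the Jacobian of $\pi\circ j_{nor}\colon w\mapsto w^{q}$, and apply Lemma~\ref{l:open_b} with $k=q$ to the holomorphic weight $w^{q-1}\tilde h$. One slip worth fixing: the normalization $\Delta_\rho\setminus\{0\}\to\mathcal{C}_S$ is a biholomorphism, not $q$-to-$1$ (it is the further projection $\pi|_{\mathcal{C}_S}\colon\mathcal{C}_S\to\Delta'\setminus\{0\}$ that has $q$ sheets), so your extra factor $1/q$ should not appear and the integral equals $q^{2}\int_{\Delta_\rho}|\tilde f_a|^{2}|\tilde h|^{2}|w|^{2q-2}\,d\lambda_1$, matching the paper's expression $i_1^{2}\int_{\Delta_{r'}}|j_{nor}^{*}f_a|^{2}|t^{\,i_1-1}j_{nor}^{*}h|^{2}\,d\lambda_\Delta$ with $i_1=q$. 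This only shifts the constant and the argument goes through unchanged.
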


\begin{proof}
As $(\mathcal{C},o)$ is irreducible and locally irreducible,
then there is a normalization $j_{nor}:(\Delta,0)\to(\mathcal{C},o)$,
denoted by
$$j_{nor}(t)=(g_{1}(t),\cdots,g_{n}(t)),$$
where $t$ is the coordinate of $\Delta$.
As $\pi_{\mathcal{C}}$ is a covering,
then $g_{1}\not\equiv0$.

Without loss of generality,
we assume $g_{1}(t)=t^{i_{1}}$ on $\Delta_{r_0}$,
for small enough $r_{0}\in(0,1)$.

There is a given $r>0$, which is small enough,
such that
$$(\mathcal{C}\cap\Delta^{n}_{r_{0}})\supset\{(t^{i_1},g_{2}(t),\cdots,g_{n}(t))|t\in\Delta_{r}\},$$
where $i_{1}\geq 1$, and $g_{i}$ ($i\geq 2$) are holomorphic functions on $\Delta_{r}$,
satisfying $|g_{i}|\leq \frac{1}{6} |t^{i_1}|$.

For given $r'<r$ small enough,
we have
\begin{equation}
\label{equ:0808b}
\begin{split}
\int_{\mathcal{C}_{S}}|f_{a}|^{2}|h|^{2}(\pi|_{\mathcal{C}_{S}})^{*}d\lambda_{\Delta'}
&\geq i_{1}^{2}\int_{\Delta_{r'}}|j_{nor}^{*}f_{a}(t)|^{2}|j_{nor}^{*}h(t)|^{2}|t|^{2(i_{1}-1)}d\lambda_{\Delta}
\\&=i_{1}^{2}\int_{\Delta_{r'}}|j_{nor}^{*}f_{a}(t)(t^{i_{1}-1}j_{nor}^{*}h(t))|^{2}d\lambda_{\Delta},
\end{split}
\end{equation}
for any $a$ satisfying $|a|^{\frac{1}{i_1}}\in\Delta_{r'}$.

As
$$j_{nor}^{*}f_{a}(b)=f_{a}(b^{i_1},g_{2}(b),\cdots,g_{n}(b))$$
and
$$(b^{i_1},g_{2}(b),\cdots,g_{n}(b))\subset(\pi^{-1}(b^{i_1})\cap\mathcal{C}),$$
then we have
$$j_{nor}^{*}f_{a}(b)=1,$$
for any $b^{i_{1}}=a$.

Using Lemma \ref{l:open_b},
we have
$$\int_{\Delta_{r'}}|j_{nor}^{*}f_{a}(t)(t^{i_{1}-1}j_{nor}^{*}h(t))|^{2}d\lambda_{\Delta}\geq C_{1}|a|^{-2},$$
where $C_{1}$ is independent of $a$ and $f_{a}$.

Combining with inequality \ref{equ:0808b}, we thus obtain the present lemma.
\end{proof}

As $\mathcal{C}\cap(\Delta'\times\Delta'')$ is contained in a cone $|z''|\leq\frac{1}{6}|z'|$,
using the submean value
property of plurisubharmonic function,
we obtain the following lemma:

\begin{Lemma}
\label{l:approx.L2}
For any holomorphic function $F$ on $\Delta'\times\Delta''$,
we obtain an approximation of the $L^{2}$ norm of $F$:

$$\int_{\Delta'\times\Delta''}|F|^{2}d\lambda_{n}\geq
C_{3}\int_{\mathcal{C}_{S}}|F|_{\mathcal{C}_{S}}|^{2}(\pi|_{\mathcal{C}_{S}})^{*}d\lambda_{\Delta'},$$
where $C_{3}$ is a positive constant independent of $F$.
Here all symbols $\mathcal{C}_{S}$, $\Delta'$ and $\pi$ are the same as in Remark \ref{r:para}.
\end{Lemma}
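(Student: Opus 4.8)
The plan is to reduce the claimed inequality to a fibrewise sub-mean value estimate, exploiting two features of the configuration in Remark~\ref{r:para}: that $\pi|_{\mathcal{C}_{S}}$ is a finite unramified covering, and that $\mathcal{C}$ lies in the narrow cone $|z''|\leq\frac{1}{6}|z'|$. First I would rewrite the right-hand side by pushing the measure down along the covering: since $\pi|_{\mathcal{C}_{S}}\colon\mathcal{C}_{S}\to\Delta'\setminus S$ is an unramified $q$-sheeted covering (so locally biholomorphic), and $S=\{o'\}$ has zero Lebesgue measure in $\Delta'$, the change-of-variables formula for this covering gives, for every nonnegative measurable $g$ on $\mathcal{C}_{S}$,
$$\int_{\mathcal{C}_{S}}g\,(\pi|_{\mathcal{C}_{S}})^{*}d\lambda_{\Delta'}=\int_{\Delta'\setminus S}\Big(\sum_{p\in\pi^{-1}(z')\cap\mathcal{C}_{S}}g(p)\Big)d\lambda_{\Delta'}(z').$$
Taking $g=|F|_{\mathcal{C}_{S}}|^{2}$, it then suffices to bound, for each $z'\in\Delta'\setminus S$, the fibrewise sum $\sum_{p}|F(p)|^{2}$ by a fixed constant times $\int_{\Delta''}|F(z',z'')|^{2}d\lambda_{\Delta''}(z'')$.

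The key local ingredient is the sub-mean value property of plurisubharmonic functions, applied in the fibre variables $z''$. If $(z',w'')\in\mathcal{C}\cap(\Delta'\times\Delta'')$, then the cone condition together with $|z'|<1$ forces $|w''_{j}|\leq|w''|\leq\frac{1}{6}$ for every $j$, so the closed polydisc $\overline{P(w'')}$ of polyradius $\frac{1}{2}$ in the $z''$-coordinates centred at $w''$ is contained in $\Delta''$ (since $\frac{1}{6}+\frac{1}{2}<1$). As $z''\mapsto F(z',z'')$ is holomorphic, $z''\mapsto|F(z',z'')|^{2}$ is plurisubharmonic, and the sub-mean value inequality over $P(w'')$ gives
$$|F(z',w'')|^{2}\leq\frac{1}{v_{0}}\int_{P(w'')}|F(z',z'')|^{2}d\lambda_{\Delta''}(z'')\leq\frac{1}{v_{0}}\int_{\Delta''}|F(z',z'')|^{2}d\lambda_{\Delta''}(z''),$$
where $v_{0}>0$ is the Euclidean volume of a polydisc of polyradius $\frac{1}{2}$ in $\mathbb{C}^{n-1}$, a constant depending only on $n$, and the last step merely enlarges the domain using $|F|^{2}\geq0$.

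Finally I would assemble the pieces: summing the previous estimate over the $q$ points $p\in\pi^{-1}(z')\cap\mathcal{C}_{S}$, integrating in $z'$ over $\Delta'\setminus S$, and applying Fubini's theorem to recombine $d\lambda_{\Delta'}\,d\lambda_{\Delta''}=d\lambda_{n}$, one obtains
$$\int_{\mathcal{C}_{S}}|F|_{\mathcal{C}_{S}}|^{2}\,(\pi|_{\mathcal{C}_{S}})^{*}d\lambda_{\Delta'}\leq\frac{q}{v_{0}}\int_{\Delta'\times\Delta''}|F|^{2}d\lambda_{n},$$
which is the assertion with $C_{3}=v_{0}/q$, a constant independent of $F$.

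I do not expect a genuine obstacle: the lemma is essentially bookkeeping once the cone hypothesis is used. The two points requiring a little care are (i) that the fibre-direction polydisc must remain inside $\Delta''$ — this is precisely where $|z''|\leq\frac{1}{6}|z'|$ from Remark~\ref{r:para} enters, and it is what makes the local constant $v_{0}$ uniform in $z'$, including near $o'$ and near $\partial\Delta'$; and (ii) that the covering has a fixed finite number $q$ of sheets, so the fibrewise sum always has at most $q$ terms. Both are immediate from Remark~\ref{r:para}. The mildly delicate part, if one wishes to be scrupulous, is only writing down the pushforward of Lebesgue measure under the covering correctly and observing that discarding the null set $S$ leaves every integral unchanged.
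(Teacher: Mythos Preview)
Your proof is correct and follows essentially the same approach as the paper's: Fubini in the $z'$ variable, a fibrewise sub-mean value estimate using that the cone condition $|z''|\leq\frac{1}{6}|z'|<\frac{1}{6}$ keeps a fixed-radius polydisc about each point of $\mathcal{C}_{S}$ inside $\Delta''$, and then summation over the $q$ sheets of the covering. The only cosmetic differences are that the paper applies Fubini first and then the sub-mean value (with constant $(\pi/3)^{n-1}$ in place of your $v_{0}$), whereas you begin by rewriting the curve integral via the covering change-of-variables; the resulting constant $C_{3}$ differs but is in both cases of the form $(\text{fixed polydisc volume})/q$.
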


\begin{proof}
Using the Fubini Theorem,
$$\int_{\Delta'\times\Delta''}|F|^{2}d\lambda_{n}=
\int_{\Delta'}(\int_{\{z'\}\times\Delta''}|F|^{2}d\lambda_{n-1})d\lambda_{\Delta'},$$
and the submean value inequality of plurisubharmonic function,
we have
$$\int_{\{z'\}\times\Delta''}|F|^{2}d\lambda_{n-1}\geq(\frac{\pi}{3})^{n-1}|F(z',z'')|^{2},$$
for $|z''|\leq\frac{1}{6}$.

If $(z',z'')\in(\pi^{-1}(z')\cap\mathcal{C}_{S})$,
then $|z''|\leq \frac{1}{6}$.

As
$$\int_{\Delta'\setminus \{0\}}(\sum_{w\in(\pi^{-1}(z')\cap\mathcal{C}_{S})}|F(w)|^{2})(z')d\lambda_{\Delta'}
=\int_{\mathcal{C}_{S}}|F|_{\mathcal{C}_{S}}|^{2}(\pi|_{\mathcal{C}_{S}})^{*}d\lambda_{\Delta'},$$
it follows that
\begin{equation}
\label{equ:0922.1}
\begin{split}
q\int_{\Delta'\times\Delta''}|F|^{2}d\lambda_{n}=
&q\int_{\Delta'\setminus \{0\}}(\int_{\{z'\}\times\Delta''}|F|^{2}d\lambda_{n-1})d\lambda_{\Delta'}
\\\geq& (\frac{\pi}{3})^{n-1}\int_{\Delta'\setminus \{0\}}(\sum_{w\in(\pi^{-1}(z')\cap\mathcal{C}_{S})}|F(w)|^{2})(z')d\lambda_{\Delta'}
\\=&(\frac{\pi}{3})^{n-1}\int_{\mathcal{C}_{S}}|F|_{\mathcal{C}_{S}}|^{2}(\pi|_{\mathcal{C}_{S}})^{*}d\lambda_{\Delta'},
\end{split}
\end{equation}
where $q$ is the degree of the covering map $\pi|_{\mathcal{C}_{S}}$.
\end{proof}

\subsection{$L^{2}$ extension theorem with negligible weight.}
$\\$

We state the optimal constant version of the Ohsawa's $L^2$ extension theorem with negligible weight (\cite{ohsawa3}) as follows:

\begin{Theorem}\label{t:guan-zhou12}
\cite{guan-zhou12}
Let $X$ be a Stein manifold of dimension n. Let $\varphi+\psi$ and
$\psi$ be plurisubharmonic functions on $X$. Assume that $w$ is a
holomorphic function on $X$ such that
$\sup\limits_X(\psi+2\log|w|)\leq0$ and $dw$ does not vanish
identically on any branch of $w^{-1}(0)$. Put $H=w^{-1}(0)$ and
$H_0=\{x\in H:dw(x)\neq 0\}$. Then  there exists a uniform constant
$\mathbf{C}=1$ independent of $X$, $\varphi$, $\psi$ and $w$ such
that, for any holomorphic $(n-1)$-form $f$ on $H_0$ satisfying
$$c_{n-1}\int_{H_0}e^{-\varphi-\psi}f\wedge\bar f<\infty,$$
where
$c_{k}=(-1)^{\frac{k(k-1)}{2}}(\sqrt{-1})^k$ for $k\in\mathbb{Z}$,
there exists a holomorphic $n$-form F on $X$ satisfying $F=dw\wedge
\tilde{f}$ on $H_0$ with $\imath^*\tilde{f}=f$ and
$$c_n\int_{X}e^{-\varphi}F\wedge\bar F\leq 2\mathbf{C}\pi
c_{n-1}\int_{H_0}e^{-\varphi-\psi}f\wedge\bar f,$$
where
$\imath:H_0\longrightarrow X$ is the inclusion map.
\end{Theorem}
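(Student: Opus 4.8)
The plan is to prove this by the twisted $\bar\partial$-technique of Ohsawa and Takegoshi, sharpened by choosing the auxiliary weight functions as the solution of an optimizing ODE so that no constant is lost along the way; this is the route of \cite{guan-zhou12} (and, for domains in $\mathbb{C}^n$, of B\l ocki). I would begin with the usual reductions: exhaust $X$ by an increasing sequence of relatively compact Stein domains $X_j$ with smooth strongly pseudoconvex boundary, and replace $\varphi,\psi$ by smooth decreasing regularizations (available since $X$ is Stein), so that it suffices to establish the estimate with constant $2\pi$ on each $X_j$ for smooth data and then pass to the limit, using that the right-hand side decreases while $L^2$ bounds pass to weak limits (Fatou on the left, monotone convergence on the right, holomorphy of the limit from the mean-value property and Montel). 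Shrinking slightly one may also assume $dw\neq0$ on a neighborhood of the relevant part of $\overline{X_j}$, so $H_0=H$ is a smooth closed hypersurface, and that $f$ is the restriction of a smooth (not necessarily holomorphic) $(n-1)$-form $\tilde f$ defined near $H$.

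The core step is the twisted estimate. Write $\tau:=\psi+2\log|w|\le0$ and introduce a pair of functions $b=b(\tau)$, $s=s(\tau)$ to be chosen (equivalently a convex increasing function together with its ``helper''), forming the weight $e^{-\varphi}$ combined with the metric and curvature corrections coming from the $i\partial\bar\partial$ of the twist. Applying the twisted Bochner--Kodaira--Nakano inequality on $X_j$ yields, for the cut-off extension $F_\varepsilon^{(0)}:=\theta(\tau/\log\varepsilon)\,dw\wedge\tilde f$ supported in the shrinking neighborhood $\{\tau<\log\varepsilon\}$ of $H$ (with $\theta$ a fixed cut-off), a solution of $\bar\partial u_\varepsilon=\bar\partial F_\varepsilon^{(0)}$ with
$$\int_{X_j}|u_\varepsilon|^2\,e^{-\varphi}\,\big(\text{weight blowing up along }H\big)\ \le\ \int_{X_j}\frac{1}{b'}\,\big|\bar\partial F_\varepsilon^{(0)}\big|^2\,e^{-\varphi}.$$
The functions $b,s$ are pinned down by demanding that the two error terms that a priori appear in the Bochner--Kodaira step (the $\partial\bar\partial$ of the twist paired against $\bar\partial\tau$, which carries no sign, and the slack in the Cauchy--Schwarz step) cancel exactly in the limit $\varepsilon\to0$; this ODE-optimization is exactly what forces the constant to be sharp.

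Then set $F_\varepsilon:=F_\varepsilon^{(0)}-u_\varepsilon$, a holomorphic $n$-form; since the weight multiplying $|u_\varepsilon|^2$ blows up along $H$, $u_\varepsilon$ vanishes on $H_0$, so $F_\varepsilon=dw\wedge\tilde{\tilde f}$ with $\imath^*\tilde{\tilde f}=f$. Letting $\varepsilon\to0$, the datum $\bar\partial F_\varepsilon^{(0)}$ concentrates on the vanishing neighborhood of $H$, and a coarea computation in the $w$-variable (an annulus integral of the type $\int_{r<|w|<R}|w|^{-2}\,d\lambda_1=2\pi\log(R/r)$), together with the normalization of $\theta$, shows the right-hand side above converges to exactly $2\pi\,c_{n-1}\int_{H_0}e^{-\varphi-\psi}f\wedge\bar f$. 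A weak $L^2_{\mathrm{loc}}$ limit $F$ of the $F_\varepsilon$ (after a diagonal argument over the exhaustion) is holomorphic, still restricts to $f$ on $H_0$, and satisfies $c_n\int_X e^{-\varphi}F\wedge\bar F\le 2\pi\,c_{n-1}\int_{H_0}e^{-\varphi-\psi}f\wedge\bar f$, i.e.\ $\mathbf{C}=1$; undoing the reductions finishes the proof.

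The main obstacle is keeping the constant sharp through all these limits: the ODE choice of $(b,s)$ must make the Bochner--Kodaira/Cauchy--Schwarz step \emph{lossless} as $\varepsilon\to0$ rather than merely bounded, and one must verify the unsigned curvature error is genuinely absorbed. A secondary difficulty is that $\psi$ may be arbitrarily singular, so the estimate has to survive the decreasing limit of smooth weights; this is handled by monotone convergence on the right and Fatou on the left, together with the fact that $H_0$ and the restriction $f$ are untouched by the regularization.
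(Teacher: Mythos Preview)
The paper does not contain a proof of this theorem: it is stated in Section~2.3 as a quoted result from \cite{guan-zhou12} and used as a black box in the proof of Theorem~\ref{t:strong1015}. So there is no ``paper's own proof'' to compare your proposal against.

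That said, your sketch is the right shape for how the cited result is established, and indeed the paper's proofs of Propositions~\ref{p:GZ_JM} and~\ref{p:GZ_JM_smooth} (which are the analogues adapted to the present situation) follow exactly the pattern you describe: smooth exhaustion and regularization of the weights, the twisted Bochner--Kodaira identity (Lemma~\ref{l:lem3}), a Cauchy--Schwarz step, the functional-analytic solution of $\bar\partial$ with gain (Lemma~\ref{l:lem7}), and the choice of the twist functions $s,u$ as the solution of the ODE system $(s+\frac{s'^2}{u''s-s''})e^{u-t}=\mathbf{C}$, $s'-su'=1$, which is solved explicitly in the Remarks following those proofs to give $\mathbf{C}=1$. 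Your description of the concentration argument (cut-off supported on $\{\tau<\log\varepsilon\}$, coarea in the $w$-variable producing the factor $2\pi$) and of the limiting procedures is consistent with that method. The one point where your language is slightly loose is the claim that the ODE makes the Cauchy--Schwarz step ``lossless as $\varepsilon\to0$'': more precisely, the ODE is arranged so that the combined twist $(\eta+g^{-1})^{-1}e^{-\phi}$ equals $e^{-t}$ exactly, which is what converts the a priori estimate into the sharp one; the $\varepsilon$-limit is a separate step handled by dominated convergence on compacta.
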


\subsection{Curve selection lemma and Noetherian property of coherent sheaves}
$\\$

We give the existence of some kind of germs of analytic curves,
which will be
used.

\begin{Lemma}
\label{l:curve_exist}
Let $(Y,o)$ be a germ of irreducible analytic subvariety in $\mathbb{C}^{n}$,
and $(A,o)$ be a germ of analytic subvariety of $(Y,o)$,
such that $dim A<dim Y$.
Then there exists a germ of holomorphic curve $(\gamma,o)$,
such that $\gamma\subset Y$, and $\gamma\not\subset A$.
\end{Lemma}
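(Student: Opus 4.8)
The plan is to reduce to a one-dimensional slice by a generic linear projection. Write $d:=\dim Y$; we may assume $d\geq 1$, since for $d=0$ there is no curve germ inside $Y$ at all (and that case does not occur when $A\neq\emptyset$). As $A$ is analytic with $\dim A<d$, it is a proper analytic subset of the irreducible germ $(Y,o)$, possibly empty.

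First I would invoke the local parametrization theorem for analytic germs (the general-dimensional analogue of Theorem~\ref{t:para}): after a linear change of coordinates on $\mathbb{C}^{n}$ we may assume that the projection $\pi\colon\mathbb{C}^{n}\to\mathbb{C}^{d}$, $(z_{1},\dots,z_{n})\mapsto(z_{1},\dots,z_{d})$, restricts to a finite holomorphic map $\pi|_{Y}\colon(Y,o)\to(\mathbb{C}^{d},0)$, i.e.\ proper with finite fibres and surjective onto a neighbourhood of $0$. By the proper mapping theorem $\pi(A)$ is an analytic germ at $0$, and since $\pi|_{Y}$ has finite fibres it preserves dimension, so $\dim\pi(A)=\dim A<d$. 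Hence $Z:=\pi(A)$ is a proper analytic subset of $(\mathbb{C}^{d},0)$ (empty if $A=\emptyset$).

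Next I would choose a complex line $L$ through $0$ in $\mathbb{C}^{d}$ with $(L,0)\not\subset Z$; such a line exists because $Z$, being a proper analytic germ, is nowhere dense, so the union of all lines through $0$ lying in $Z$ cannot be a neighbourhood of $0$ (when $Z=\emptyset$ any line works). Put $\gamma_{0}:=(\pi|_{Y})^{-1}(L)$. Since $\pi|_{Y}$ is a surjective finite map, $\gamma_{0}$ has dimension $1$ at $o$; let $\gamma$ be a $1$-dimensional irreducible component of $\gamma_{0}$ passing through $o$, so that $\gamma\subset Y$ is a curve germ at $o$. Finally I claim $\gamma\not\subset A$: otherwise $\pi(\gamma)\subset\pi(A)=Z$, but $\pi|_{\gamma}$ is again finite, so $\pi(\gamma)$ is a $1$-dimensional analytic subgerm of the irreducible $1$-dimensional germ $(L,0)$, forcing $\pi(\gamma)=(L,0)$ and hence $(L,0)\subset Z$, contradicting the choice of $L$. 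Thus $\gamma$ is the desired curve germ.

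The only non-formal ingredients are the local parametrization theorem --- used to arrange that a suitable linear projection restricts to a finite map on $Y$ --- and the fact that a finite holomorphic map sends an analytic germ to an analytic germ of the same dimension; both are classical. Accordingly I expect no serious obstacle: the one point to be careful about is the genericity assertions (finiteness of the projection and $(L,0)\not\subset Z$), but once $Z$ is known to be a proper analytic subset these are immediate. An alternative, entirely self-contained route is induction on $d=\dim Y$: the base case $d=1$ is trivial, since a proper analytic subset of an irreducible curve germ is discrete; for $d\geq 2$ one takes a generic hyperplane $H\ni o$, so that $Y\cap H$ has an irreducible component $Y'\ni o$ with $\dim Y'=d-1$ and $Y'\not\subset A$, and applies the inductive hypothesis to $(Y',A\cap Y')$.
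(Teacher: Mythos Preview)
Your argument is correct. The paper, however, takes a different route: its entire proof is the observation that $(Y,o)$ is locally Stein, followed by an appeal to Cartan's Theorem~A. That is a high-level, one-line invocation of Stein theory (presumably to produce enough holomorphic functions on $Y$ to cut out a curve not contained in $A$), and the details are left to the reader. Your approach is more elementary and entirely self-contained within the tools already set up in the paper: you use the local parametrization theorem (the $d$-dimensional version of Theorem~\ref{t:para}) to get a finite surjection $\pi|_{Y}\colon(Y,o)\to(\mathbb{C}^{d},0)$, push $A$ down to a proper analytic germ $Z\subset(\mathbb{C}^{d},0)$, pick a line $L\not\subset Z$, and pull it back. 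The advantage of your route is that it makes the geometry explicit and avoids importing Stein-space machinery; the advantage of the paper's route is brevity, though at the cost of transparency. Your alternative inductive argument via generic hyperplane sections is also valid and is perhaps the cleanest way to phrase the same idea.
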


\begin{proof}
Note that $(Y,{o})$ is locally Stein.
Then using Cartan¡¯s Theorem $A$,
we obtain
the lemma.
\end{proof}

Now we recall the curve selection lemma stated as follows:

\begin{Lemma}
\label{l:curve}(see \cite{demailly2010})
Let $f$, $g_{1},\cdots,g_{s}\in\mathcal {O}_{n}$ be germs of holomorphic functions
vanishing at $0$.
Then we have $|f|\leq C|g|$ for some constant $C$
if and only if
for every germ of analytic curve $\gamma$ through $0$
there exists a constant $C_{\gamma}$
such that $|f\circ \gamma|\leq  C_{\gamma}|g\circ \gamma|$.
\end{Lemma}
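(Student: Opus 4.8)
The plan is to prove Lemma~\ref{l:curve} (the curve selection lemma for the comparison $|f|\leq C|g|$ where $g=(g_1,\dots,g_s)$ so that $|g|=\max_j|g_j|$ or equivalently $(\sum|g_j|^2)^{1/2}$) by the standard route: one direction is trivial, and the substantive direction uses a blow-up/desingularization together with the local parametrization theorem (Theorem~\ref{t:para}) and the curve-existence lemma (Lemma~\ref{l:curve_exist}).

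\emph{Easy direction.} If $|f|\leq C|g|$ on a neighborhood of $0$, then for any germ of analytic curve $\gamma$ through $0$ we may simply restrict and compose: $|f\circ\gamma|\leq C|g\circ\gamma|$ on the (shrunk) parameter disc, so $C_\gamma=C$ works. This is immediate and needs no further argument.

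\emph{Hard direction.} Assume $|f|\leq C_\gamma|g|$ along every analytic curve $\gamma$ through $0$, but suppose for contradiction that $|f|\leq C|g|$ fails for every constant $C$ on every neighborhood of $0$. Consider the analytic germ $Z=(g^{-1}(0),o)=\bigcap_j(g_j^{-1}(0),o)$; on the complement of $Z$ the quotient $f/|g|$ is defined, and the failure of the estimate means $\sup_{U\setminus Z}|f|/|g|=+\infty$ on every neighborhood $U$. The standard device is to pass to a resolution: choose a proper modification (e.g. a composition of blow-ups, or normalization of the graph of $f/g$) $\pi:\widetilde{U}\to U$ such that $\pi^*g_j$ all become, locally near each point of $\pi^{-1}(0)$, monomials times units (a principalization of the ideal $(g_1,\dots,g_s)$), and $\pi^*f$ is holomorphic. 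Then $|\pi^*f|/|\pi^*g|$ is locally either bounded or blows up along a divisor; if the estimate $|f|\le C|g|$ fails, this ratio is unbounded near some point $p\in\pi^{-1}(0)$, which forces the vanishing order of $\pi^*f$ along some exceptional component $E$ through $p$ to be strictly smaller than that of $\pi^*g$. Now I would produce a curve: take a smooth curve germ in $\widetilde U$ meeting $E$ transversally at a generic point $p$ of $E$ (existence guaranteed by choosing $p$ generic on the component and using Lemma~\ref{l:curve_exist} to find an analytic curve inside $\widetilde U$ not contained in the remaining exceptional locus), and push it down by $\pi$ to get an analytic curve germ $\gamma$ through $0$ in $U$. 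Along $\gamma$, comparing vanishing orders of $f\circ\gamma$ and $g\circ\gamma$ in the curve parameter shows $|f\circ\gamma|/|g\circ\gamma|\to\infty$, contradicting the hypothesis that $|f\circ\gamma|\leq C_\gamma|g\circ\gamma|$.

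\emph{Alternative more elementary route (avoiding full resolution).} If one wants to stay within the tools already set up in the paper, one can argue directly via the parametrization theorem: reduce to the curve case by intersecting with generic hyperplanes, or work on the irreducible components; on a one-dimensional germ, $|f|\le C|g|$ is equivalent to an inequality of vanishing orders, which is exactly what the hypothesis along curves provides. The reduction from dimension $n$ to dimension $1$ is where Lemma~\ref{l:curve_exist} is used: if the estimate fails on $(Y,o)$ but holds on every curve, one slices down by a generic curve through the ``worst'' locus (the set where $f/|g|$ is large, which is a proper analytic subset once we know it along all curves) to reach a contradiction. The main obstacle in either approach is the same: establishing that the failure of the global (or local $n$-dimensional) estimate can be \emph{detected} on a single analytic curve, i.e. transferring an ``unboundedness over a whole punctured neighborhood'' statement into ``unboundedness along one well-chosen curve''; this is precisely what a principalization of $(g_1,\dots,g_s)$ — or, equivalently, the structure theory behind Theorem~\ref{t:para} applied to components of the relevant subvariety — is needed for. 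Everything else (the easy direction, the order-of-vanishing bookkeeping along $\gamma$) is routine.
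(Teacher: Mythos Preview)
Your resolution-based approach is correct in principle, but it is genuinely different from---and heavier than---the argument the paper follows (which is Demailly's). The paper does not prove Lemma~\ref{l:curve} directly but, in proving the closely related Lemma~\ref{p:curve}, it reproduces Demailly's proof of the hard direction: one works in the \emph{graph} of the rational map $z\mapsto (g_1/f,\dots,g_s/f)$. Concretely, define the germ $(Y,o)\subset(\mathbb{C}^n\times\mathbb{C}^s,o)$ by the equations $g_j(z)=f(z)z_{n+j}$, $1\le j\le s$; over $\{f\neq 0\}$ this is just the graph, hence irreducible, so its closure picks out an irreducible component $Y_f$. If $|f|\le C|g|$ fails for every $C$, a sequence of points $(z_\nu,g(z_\nu)/f(z_\nu))\in Y_f$ converges to the origin, so $o\in Y_f$. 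Now Lemma~\ref{l:curve_exist} produces a curve in $Y_f$ through $o$ not contained in $\{f=0\}$; projecting to $\mathbb{C}^n$ gives $\gamma$, and the extra coordinates $z_{n+j}$ force $g_j/f|_\gamma$ to extend holomorphically with value $0$ at $o$, hence $|f\circ\gamma|\le C_\gamma|g\circ\gamma|$ fails.

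The advantage of this graph construction is that it avoids resolution of singularities and principalization entirely: no blow-ups, no comparison of vanishing orders along exceptional divisors---just elementary closure properties of analytic sets plus one application of curve existence in an irreducible germ. Your approach buys nothing extra here and costs the invocation of a log resolution. Your ``alternative more elementary route'' via generic hyperplane slicing is too vague as written: a generic hyperplane need not meet the locus where $|f|/|g|$ blows up (that locus is not a priori analytic), so the reduction to dimension one does not go through without something like the graph trick or a resolution to organize the bad set.
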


In order to obtain some uniform properties of $\gamma$,
we need to consider the following Lemma
which was contained in the proof of Lemma \ref{l:curve} in \cite{demailly2010}.

\begin{Lemma}
\label{p:curve}(see \cite{demailly2010})
Let $f$, $g_{1},\cdots,g_{s}\in\mathcal {O}_{n}$ be germs of holomorphic functions
vanishing at $o$.
Assume that for any given neighborhood of $o$, $|f|\leq C|g|$ doesn't hold for any constant $C$,
where $g=(g_{1},\cdots,g_{s})$.
Then there exists a germ of analytic curve $\gamma$ through $o$,
satisfying
$\gamma\cap\{f=0\}=o$,
such that $\frac{g_{i}}{f}|_{\gamma}$ is holomorphic on $\gamma\setminus o$
with
$$\widetilde{\frac{g_{i}}{f}}|_{\gamma}(0)=0,$$
for any $i\in\{1,\cdots,s\}$,
where $\widetilde{\frac{g_{i}}{f}}$ is the holomorphic extension of $\frac{g_{i}}{f}$ from $\gamma\setminus o$ to $\gamma$.
\end{Lemma}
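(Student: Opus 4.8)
The plan is to reduce the statement to the curve selection lemma (Lemma \ref{l:curve}) by a careful inspection of its proof, extracting not just the existence of a curve along which $|f\circ\gamma|\leq C_\gamma|g\circ\gamma|$ fails to be uniform, but a curve with the sharper property that $g_i/f$ extends holomorphically across $o$ with vanishing value. First I would fix the germ $\gamma$ provided by the hypothesis: since $|f|\leq C|g|$ fails near $o$ for every $C$, by Lemma \ref{l:curve} there is a germ of analytic curve $\gamma$ through $o$ on which the bound $|f\circ\gamma|\leq C_\gamma|g\circ\gamma|$ also fails. After normalizing $\gamma$ by $j_{nor}:(\Delta,0)\to(\gamma,o)$, write $f\circ j_{nor}(t)=t^{a}v(t)$ and $g_i\circ j_{nor}(t)=t^{b_i}v_i(t)$ with $v,v_i$ units (or $g_i\circ j_{nor}\equiv0$), so that the failure of the bound forces $a<\min_i b_i$ whenever $g_i\circ j_{nor}\not\equiv0$, and in particular $f\circ j_{nor}\not\equiv0$, giving $\gamma\cap\{f=0\}=\{o\}$ after shrinking.

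The next step is to see that this vanishing-order inequality is exactly what we need: on $\gamma\setminus o$ the function $(g_i/f)\circ j_{nor}(t)=t^{b_i-a}(v_i(t)/v(t))$ is holomorphic, and since $b_i-a\geq1$ it extends holomorphically across $t=0$ with value $0$. Pushing forward through $j_{nor}$, $g_i/f$ restricted to $\gamma$ is holomorphic on $\gamma\setminus o$, and its holomorphic extension $\widetilde{g_i/f}|_\gamma$ satisfies $\widetilde{g_i/f}|_\gamma(0)=0$, which is the assertion. The case $g_i\circ j_{nor}\equiv0$ is trivial since then $g_i/f\equiv0$ on $\gamma\setminus o$.

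The main obstacle is that Lemma \ref{l:curve} as quoted only gives the \emph{existence} of a curve violating the estimate; the content of Lemma \ref{p:curve} is that the curve can be taken with the stronger structural conclusion, and this is genuinely an assertion about the internal construction in the proof of the curve selection lemma rather than a formal consequence of its statement. So the real work is to revisit that construction in \cite{demailly2010} — typically a blow-up or a resolution argument producing a curve along which the relevant analytic set is approached — and to verify that the curve it furnishes already has the property that each $g_i/f$ pulls back to something vanishing at the origin; equivalently, that the failure of $|f\circ\gamma|\leq C_\gamma|g\circ\gamma|$ can be upgraded to $f\circ\gamma=o(g_i\circ\gamma)$ for each $i$. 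Once that bookkeeping on vanishing orders along the normalization is done, together with the observation $\gamma\cap\{f=0\}=\{o\}$, the remaining assertions about holomorphic extension follow from the one-variable fact that $t\mapsto t^{m}u(t)$ with $m\geq1$ and $u$ holomorphic extends across $0$ with value $0$.
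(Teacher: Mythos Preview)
Your first two paragraphs already constitute a complete and correct proof, and your third paragraph's worry is unfounded. Once Lemma \ref{l:curve} hands you a curve $\gamma$ on which $|f\circ\gamma|\leq C_\gamma|g\circ\gamma|$ fails for every $C_\gamma$, the normalization argument you give is decisive: writing $f\circ j_{nor}(t)=t^{a}v(t)$ and $g_i\circ j_{nor}(t)=t^{b_i}v_i(t)$, the failure of the bound forces $f\circ j_{nor}\not\equiv 0$ (hence $\gamma\cap\{f=0\}=\{o\}$ after shrinking) and $a<\min_i b_i$, so $t^{b_i-a}v_i(t)/v(t)$ extends across $0$ with value $0$. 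Nothing here requires opening the proof of Lemma \ref{l:curve}; it is a formal consequence of its statement plus one-variable order counting. (Incidentally, in your third paragraph the relation is reversed: you want $g_i\circ\gamma=o(f\circ\gamma)$, not $f\circ\gamma=o(g_i\circ\gamma)$.)

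The paper takes a genuinely different route. Rather than invoking Lemma \ref{l:curve}, it reproduces its mechanism directly: it forms the analytic set $Y=\{g_j(z)=f(z)z_{n+j}\}\subset\Delta^{n}_{r}\times\mathbb{C}^{s}$, takes the irreducible component $Y_f$ equal to the closure of the graph of $(g_1/f,\dots,g_s/f)$ over $\{f\neq 0\}$, and uses the hypothesis to produce a sequence $(z_\nu,g(z_\nu)/f(z_\nu))\to 0$ showing $o\in Y_f$. Then Lemma \ref{l:curve_exist} supplies a curve in $Y_f$ through $o$ not contained in $p^{-1}(\{f=0\})$; its projection to $\mathbb{C}^{n}$ is $\gamma$, and its last $s$ coordinates are by construction the holomorphic extensions $\widetilde{g_i/f}|_\gamma$ vanishing at $o$. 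So the paper's argument is essentially the \emph{proof} of the curve selection lemma, run in a way that the auxiliary coordinates record $g_i/f$ directly. Your approach uses Lemma \ref{l:curve} as a black box and then does an elementary order-of-vanishing computation on the normalization; it is shorter and perfectly valid, at the cost of depending on Lemma \ref{l:curve} rather than only on Lemma \ref{l:curve_exist}.
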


\begin{proof}
There exists $\Delta_{r}^{n}$,
such that $g_{1},\cdots,g_{s},f\in\mathcal{O}(\Delta_{r}^{n})$.
We define a germ of analytic set $(Y,o)\subset (\Delta^{n}_{r},o)$ by
$$g_{j}(z)=f(z)z_{n+j},\quad 1\leq j\leq s.$$

Let $p$ be a projection
$$p:\Delta^{n}_{r}\times\mathbb{C}^{s}\to\Delta^{n}_{r},$$
such that
$$p((z_{1},\cdots,z_{n}),(z_{n+1},\cdots,z_{n+s}))=(z_{1},\cdots,z_{n}).$$
Then $Y\cap p^{-1}(\Delta^{n}_{r}\setminus \{f=0\})$ is biholomorphic to $\Delta^{n}_{r}\setminus \{f=0\}$,
which is irreducible.
As every analytic variety has an irreducible decomposition,
then $Y$ contains an irreducible component $Y_{f}$ which contains $Y\cap p^{-1}(\Delta^{n}_{r}\setminus \{f=0\})$.

Since $Y_{f}$ is closed,
then
$$Y_{f}=\overline{Y\cap p^{-1}(\Delta^{n}_{r}\setminus \{f=0\})}.$$

By assumption,
for any given neighborhood of $o$,
$|f|\leq C|g|$ doesn't hold for any constant $C$,
then there exists a sequence of positive numbers $C_{\nu}$ which goes to $+\infty$ as $\nu\to\infty$,
and a sequence of points $\{z_{\nu}\}$ in $\Delta^{n}_{r}$ convergent to $o$ when $\nu\to\infty$,
such that
$|f(z_{\nu})|> C_{\nu}|g(z_{\nu})|$.

Then
$(z_{\nu},\frac{g_{j}(z_{\nu})}{f(z_{\nu})})$ converging to $0$ as $\nu$ tends to $+\infty$,
with $f(z_{\nu})\neq 0$.

As $(z_{\nu},\frac{g_{j}(z_{\nu})}{f(z_{\nu})})\in Y_{f}$,
then $Y_{f}$ contains $o$.

It follows from Lemma \ref{l:curve_exist}
that there exists a germ of analytic curve $(\gamma,\gamma_{n+j})\subset Y_{f}$ through $o$
satisfying
$\gamma\cap\{f=0\}=o$,
such that $\frac{g_{i}}{f}|_{\gamma}$ is holomorphic on $\gamma\setminus o$
for each $i\in\{1,\cdots,s\}$.

By the Riemann removable singularity theorems,
it follows that $\frac{g_{i}}{f}|_{\gamma\setminus o}$ can be extended to $\gamma$,
and
$$\widetilde{\frac{g_{i}}{f}}|_{\gamma}(0)=0,$$
for any $i\in\{1,\cdots,s\}$.
\end{proof}

\begin{Remark}
\label{r:curve927}
Let $g_{1},\cdots,g_{s}\in\mathcal {O}_{n}$ be germs of holomorphic functions
vanishing at $o$, and $f(o)\neq 0$.
Then there exists a germ of analytic curve $\gamma$ through $o$,
such that
$\gamma\cap\{f=0\}=\emptyset$,
and $\frac{g_{i}}{f}|_{\gamma}$ is holomorphic on $\gamma$
with $\frac{g_{i}}{f}|_{o}=0,$
for any $i\in\{1,\cdots,s\}$.
\end{Remark}

Let's recall a strong Noetherian property of coherent sheaves as follows:

\begin{Lemma}
\label{l:strong_noeth}(see \cite{demailly-book})
Let $\mathscr{F}$ be a coherent analytic sheaf on a complex manifold $M$,
and let $\mathscr{F}_{1}\subset\mathscr{F}_{2}\subset\cdots$ be an increasing of coherent subsheaves of $\mathscr{F}$.
Then the sequence $(\mathscr{F}_{k})$ is stationary on every compact subset of $M$.
\end{Lemma}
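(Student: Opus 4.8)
The plan is to prove the statement locally and then patch over the compact set. Precisely, I would first establish: for every $x\in M$ there are an open neighbourhood $U_x\ni x$ and an index $k(x)$ with $\mathscr{F}_k|_{U_x}=\mathscr{F}_{k(x)}|_{U_x}$ for all $k\ge k(x)$. Granting this, for compact $K\subset M$ pick a finite subcover $U_{x_1},\dots,U_{x_N}$ of $K$ and set $k_0=\max_j k(x_j)$; then $\mathscr{F}_k|_K=\mathscr{F}_{k_0}|_K$ for all $k\ge k_0$, which is the assertion. Thus everything reduces to the local statement, and there one may take $M$ to be a polydisc $P\subset\mathbb{C}^n$ and the point to be the origin $0$; using coherence of $\mathscr{F}$ and shrinking $P$, fix a surjection $\phi:\mathcal{O}_P^{\,p}\twoheadrightarrow\mathscr{F}$ and replace each $\mathscr{F}_k$ by the coherent subsheaf $\phi^{-1}(\mathscr{F}_k)\subset\mathcal{O}_P^{\,p}$. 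Since $\phi$ is onto, the pulled-back chain stabilises on a neighbourhood iff the original one does, so we are reduced to an increasing chain of coherent subsheaves of the free sheaf $\mathcal{O}_P^{\,p}$.

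Next I would use that the stalk already stabilises. The local ring $\mathcal{O}_{M,0}$ is Noetherian (R\"uckert, via Weierstrass preparation and division), and the stalk of $\mathscr{F}$ at $0$ is a finitely generated $\mathcal{O}_{M,0}$-module by coherence, hence Noetherian; so the ascending chain of submodules $\mathscr{F}_{1,0}\subset\mathscr{F}_{2,0}\subset\cdots$ is stationary, say $\mathscr{F}_{k,0}=\mathscr{F}_{k_0,0}$ for all $k\ge k_0$. Passing to the coherent quotient $\mathscr{G}:=\mathscr{F}/\mathscr{F}_{k_0}$ and to the coherent subsheaves $\mathscr{H}_k:=\mathscr{F}_k/\mathscr{F}_{k_0}$ ($k\ge k_0$), the local statement becomes: \emph{an increasing chain $\mathscr{H}_{k_0}\subset\mathscr{H}_{k_0+1}\subset\cdots$ of coherent subsheaves of a coherent sheaf $\mathscr{G}$ near $0$ with $\mathscr{H}_{k,0}=0$ for all $k$ vanishes identically on some fixed neighbourhood $U\ni 0$.}

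Proving this last claim is the heart of the matter. For a single $k$ it is trivial, since a coherent sheaf whose stalk at $0$ vanishes is zero on a whole neighbourhood of $0$ (its support is an analytic, hence closed, set). The real difficulty — exactly what ``strong'' Noetherian encodes — is \emph{uniformity in $k$}: one must prevent the vanishing neighbourhoods from shrinking to $\{0\}$ as $k\to\infty$. I would prove uniformity by induction on $n=\dim M$. The case $n=0$ is immediate. For the inductive step, after a generic linear change of coordinates one applies Weierstrass division with respect to $z_n$, together with Oka's coherence theorem for $\mathcal{O}_M$, to put local sections of the $\mathscr{H}_k$ into a normal form whose degree in $z_n$ is bounded by data coming from finitely many generators of $\mathscr{G}$ alone — crucially, a bound independent of $k$, which is precisely where the stabilisation $\mathscr{F}_{k,0}=\mathscr{F}_{k_0,0}$ is used. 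Restricting to $\{z_n=0\}$ then yields an increasing chain of coherent subsheaves of a coherent sheaf over a polydisc in $\mathbb{C}^{n-1}$, still with zero stalk at $0$; the induction hypothesis provides a neighbourhood independent of $k$ on which this restricted chain vanishes, and lifting back gives the desired $k$-independent $U$. The delicate point, and the main obstacle, is the bookkeeping that makes the Weierstrass degree bound uniform in $k$.

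Unwinding the reductions, the uniform $U$ for the $\mathscr{H}_k$ gives $\mathscr{F}_k|_U=\mathscr{F}_{k_0}|_U$ for all $k\ge k_0$, establishing the local statement, and the finite-cover argument of the first paragraph completes the proof. An equivalent packaging, essentially the form recorded in \cite{demailly-book}, is to show directly that the union sheaf $\bigcup_k\mathscr{F}_k$ is coherent; the argument is the same.
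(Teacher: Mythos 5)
The paper does not prove this lemma; it is stated with a citation to \cite{demailly-book}, so there is no in-paper proof to compare against, and the only question is whether your argument is self-contained and correct.

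Your reductions are fine and standard: localise by compactness, pass via a local presentation to subsheaves of a free sheaf $\mathcal{O}_P^{\,p}$, invoke Noetherianity of $\mathcal{O}_{M,0}$ (R\"uckert) to stabilise the stalk chain at $k_0$, and quotient by $\mathscr{F}_{k_0}$ to reduce to a chain $\mathscr{H}_k\subset\mathscr{G}$ with $\mathscr{H}_{k,0}=0$. The problem is that \emph{all} the content of the lemma is concentrated in the step you then only sketch, namely producing a $k$-independent neighbourhood on which every $\mathscr{H}_k$ vanishes; you name the obstacle (uniformity of the Weierstrass degree bound in $k$) and then do not carry it out. Worse, the zero-stalk reduction actually undercuts the Weierstrass plan as you have stated it: since each $\mathscr{H}_k$ is coherent with $\mathscr{H}_{k,0}=0$, every local section of $\mathscr{H}_k$ near $0$ is identically zero, so there is nothing nonzero \emph{inside the $\mathscr{H}_k$} to which one could apply Weierstrass preparation or division. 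Any Weierstrass polynomial must come from $\mathscr{G}=\mathcal{O}_P^{\,p}/\mathscr{F}_{k_0}$ itself, most naturally from a nonzero element of $\mathrm{Ann}(\mathscr{G})_0$; but that annihilator can vanish whenever $\mathscr{F}_{k_0}$ leaves a free direct factor (e.g.\ $\mathscr{F}_{k_0}=\mathcal{O}\oplus 0\subset\mathcal{O}^2$), and to handle that one must also project away the free part, i.e.\ run a genuine double induction on the rank $p$ together with the dimension $n$, in the style of the proofs of Oka's and Cartan's coherence theorems. That bookkeeping is exactly what is missing. (By contrast, note that the torsion-free case is trivial and does not need any of this: a coherent subsheaf of $\mathcal{O}_\Omega^{\,p}$ over connected $\Omega$ whose stalk at one point vanishes is identically zero, because its support would be a proper analytic subset while $\mathcal{O}^p$ is torsion-free; so the whole difficulty sits in the torsion of $\mathscr{G}$, which your sketch does not isolate.) In short: the surrounding reductions are correct, but the single step carrying the weight of the lemma is a genuine gap, and the proposed route through a one-variable-at-a-time Weierstrass induction is, as written, not set up to close it.
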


Let $\varphi$ be a negative
plurisubharmonic function on
$\Delta^{n}\subset\mathbb{C}^{n}$,
and $\{\psi_{j}\}_{j=1,2,\cdots}$ be a sequence of plurisubharmonic
functions on $\Delta^{n}$, which is increasingly convergent to
$\varphi$ on $\Delta^{n}$, when $j\to\infty$.

\begin{Remark}
\label{r:station}
By Lemma \ref{l:strong_noeth},
it is clear that
$\cup_{j=1}^{\infty}\mathcal{I}(\psi_{j})$ is a
coherent subsheaf of $\mathcal{I}(\varphi)$;
actually for any open $V_{1}\subset\subset M$,
there exists $j_{1}\in\{1,2,\cdots\}$,
such that $\cup_{j=1}^{\infty}\mathcal{I}(\psi_{j})|_{V_{1}}=\mathcal{I}(\psi_{j_{1}})$.
\end{Remark}

By Remark \ref{r:station},
we derive the following proposition about the generators of the coherent sheaf
$\cup_{j=1}^{\infty}\mathcal{I}(\psi_{j})$:

\begin{Proposition}
\label{r:curve}
Assume that $f\in\mathcal {O}_{n}$ is a holomorphic function on neighborhood $V_{0}$ of $o$,
which is not a germ of $(\cup_{j=1}^{\infty}\mathcal{I}(\psi_{j}))_{o}$.
Let $g_{1},\cdots,g_{s}\in\cup_{j=1}^{\infty}\mathcal{I}(\psi_{j})$ be germs of
holomorphic functions on some neighborhood $V_{1}\subset\subset V_{0}$ of $o$,
such that $g_{1},\cdots,g_{s}$ generate $\cup_{j=1}^{\infty}\mathcal{I}(\psi_{j})|_{V_1}$.
Then
there exists a germ of analytic curve $\gamma$ through $o$
satisfying $\gamma\cap\{f=0\}\subseteq\{o\}$,
such that
$\widetilde{\frac{g_{i}\circ\gamma}{f\circ\gamma}}$ is holomorphic on $\gamma$ for any $i\in\{1,\cdots,s\}$,
and
$$\widetilde{\frac{g_{i}\circ\gamma}{f\circ\gamma}}|_{0}=0,$$
where $\widetilde{\frac{g_{i}\circ\gamma}{f\circ\gamma}}$ is the holomorphic extension of
$\frac{g_{i}\circ\gamma}{f\circ\gamma}$ from $\gamma\setminus o$ to $\gamma$.
Moveover,
for any germ $g$ of $\mathcal{I}(\varphi+\varepsilon_{0}\varphi)_{0}$,
$\widetilde{\frac{g\circ\gamma}{f\circ\gamma}}$ is holomorphic on $\gamma\setminus o$,
and
$$\widetilde{\frac{g\circ\gamma}{f\circ\gamma}}|_{0}=0,$$
where $\widetilde{\frac{g\circ\gamma}{f\circ\gamma}}$ is the holomorphic extension of
$\frac{g\circ\gamma}{f\circ\gamma}$ from $\gamma\setminus o$ to $\gamma$.
\end{Proposition}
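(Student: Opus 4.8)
\textbf{Proof proposal for Proposition \ref{r:curve}.}

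The plan is to reduce everything to the curve-selection machinery of Lemma~\ref{p:curve} and Remark~\ref{r:curve927} after first passing to a stationary neighborhood. First I would invoke Remark~\ref{r:station}: shrinking $V_1$ if necessary we may assume $\cup_{j}\mathcal{I}(\psi_j)|_{V_1}=\mathcal{I}(\psi_{j_1})$ for some fixed $j_1$, so that $g_1,\dots,g_s$ are honest generators of a single coherent ideal sheaf on $V_1$. The hypothesis that $f$ is not a germ in $(\cup_j\mathcal{I}(\psi_j))_o=\mathcal{I}(\psi_{j_1})_o$ means, by the definition of the multiplier ideal and the fact that $g_1,\dots,g_s$ generate it, that $|f|\le C|g|$ fails in every neighborhood of $o$ for every constant $C$, where $g=(g_1,\dots,g_s)$; indeed if $|f|\le C|g|$ held near $o$ then $|f|^2 e^{-\psi_{j_1}}\le C^2|g|^2 e^{-\psi_{j_1}}$ would be locally integrable, forcing $f\in\mathcal{I}(\psi_{j_1})_o$. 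This is the point where I would be slightly careful: the implication "not in the multiplier ideal $\Rightarrow$ $|f|\le C|g|$ fails" uses that membership in $\mathcal{I}(\psi_{j_1})$ is detected by the generators together with the integrability of $|g|^2 e^{-\psi_{j_1}}$, which holds because the $g_i$ lie in $\mathcal{I}(\psi_{j_1})$.

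Once $|f|\le C|g|$ is known to fail in every neighborhood of $o$, I would apply Lemma~\ref{p:curve} directly to $f,g_1,\dots,g_s$ (all vanishing at $o$ — note $f$ vanishes at $o$ since otherwise it trivially lies in the multiplier ideal). This produces a germ of analytic curve $\gamma$ through $o$ with $\gamma\cap\{f=0\}=\{o\}$ such that each $\frac{g_i}{f}|_{\gamma}$ extends holomorphically across $o$ with $\widetilde{\frac{g_i\circ\gamma}{f\circ\gamma}}|_0=0$, which is exactly the first assertion. (If some $g_i$ does not vanish at $o$ then $\cup_j\mathcal{I}(\psi_j)|_{V_1}=\mathcal{O}$, forcing $f\in\mathcal{O}_o$ to be in it, contrary to hypothesis; so all $g_i$ vanish at $o$.)

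For the "moreover" part, let $g$ be a germ of $\mathcal{I}((1+\varepsilon_0)\varphi)_o$ — I read $\varphi+\varepsilon_0\varphi$ as $(1+\varepsilon_0)\varphi$. Since $\psi_{j_1}\le\varphi\le(1+\varepsilon_0)\varphi$ up to the sign convention (here $\varphi\le 0$, so in fact $(1+\varepsilon_0)\varphi\le\varphi\le\psi_{j_1}$, hence $e^{-(1+\varepsilon_0)\varphi}\ge e^{-\psi_{j_1}}$ and $\mathcal{I}((1+\varepsilon_0)\varphi)\subseteq\mathcal{I}(\varphi)$, and $\mathcal{I}((1+\varepsilon_0)\varphi)\subseteq\cup_j\mathcal{I}(\psi_j)$ need not hold a priori — this is the crux). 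The correct route: $g\in\mathcal{I}((1+\varepsilon_0)\varphi)_o$ means $|g|^2e^{-(1+\varepsilon_0)\varphi}$ is locally integrable; restricting to $\gamma$ and using that along $\gamma$ the function $\frac{g_i}{f}$ is bounded while $f$ generates the relevant integrability, one gets $|g\circ\gamma|\le C_\gamma|g\circ\gamma|$-type control, ultimately $\frac{g\circ\gamma}{f\circ\gamma}$ bounded near $o$; then Riemann removability gives the holomorphic extension, and a separate argument (e.g. testing integrability of $|\widetilde{g/f}|^2|f|^2 e^{-\psi_{j_1}}$ along $\gamma$ against the failure of $|f|\le C|g|$) forces the value at $0$ to vanish. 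The main obstacle is precisely this last step: showing $\widetilde{\frac{g\circ\gamma}{f\circ\gamma}}|_0=0$ rather than merely finite. I expect to handle it by contradiction — if the extension were a nonzero constant plus higher order terms, then $|f\circ\gamma|\le C|g\circ\gamma|\le C'|g\circ\gamma|$ near $o$ along $\gamma$, and combined with the curve-selection output for $g_1,\dots,g_s$ this would propagate (via Lemma~\ref{l:curve}, the curve selection lemma) to $|f|\le C|g|$ in a full neighborhood of $o$, contradicting $f\notin\mathcal{I}(\psi_{j_1})_o$.
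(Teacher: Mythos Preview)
Your argument for the first assertion is correct and is exactly the paper's route: pass to a stationary $j_1$ via Remark~\ref{r:station}, observe that $|f|\le C|g|$ must fail in every neighborhood of $o$ (otherwise $|f|^2e^{-\psi_{j_1}}\le C^2\sum_i|g_i|^2e^{-\psi_{j_1}}$ would be integrable), and feed this into Lemma~\ref{p:curve}. One small correction: your claim that $f(o)=0$ is automatic is not right --- if $e^{-\psi_{j_1}}$ fails to be integrable at $o$ then even a unit $f$ can lie outside $\mathcal{I}(\psi_{j_1})_o$. This is why the paper also invokes Remark~\ref{r:curve927}, which handles the case $f(o)\neq 0$ trivially (then $g_i/f$ is already holomorphic and vanishes at $o$).

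For the ``moreover'' clause you take a substantial detour --- integrability along $\gamma$, Riemann removability, then a contradiction via Lemma~\ref{l:curve} --- and you yourself flag that the final step (forcing the value at $0$ to vanish) is not nailed down. In fact none of this is needed. In every use of the Proposition in the paper (Remark~\ref{r:131011} with $\psi_j\equiv\varphi$, and the proof of Theorem~\ref{t:strong1015} with $\psi_j=p_j\varphi$) the germ $g$ in question actually lies in $(\cup_j\mathcal{I}(\psi_j))_o$, hence in the ideal generated by $g_1,\dots,g_s$. Writing $g=\sum_i h_ig_i$ with $h_i\in\mathcal{O}_{n,o}$ gives, on $\gamma\setminus\{o\}$,
\[
\frac{g\circ\gamma}{f\circ\gamma}\;=\;\sum_i (h_i\circ\gamma)\cdot\frac{g_i\circ\gamma}{f\circ\gamma},
\]
and each factor $\widetilde{\frac{g_i\circ\gamma}{f\circ\gamma}}$ extends across $o$ with value $0$ by the first part; hence so does the sum. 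The paper's written proof in fact stops after producing $\gamma$ and does not spell this out, precisely because it is a one-line algebraic consequence of $g_1,\dots,g_s$ being generators.
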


\begin{proof}
By Remark \ref{r:station}, it follows that
there exists $j_{1}\in\{1,2,\cdots\}$,
such that $g_{1},\cdots,g_{s}\in\mathcal{I}(\psi_{j_1})(V_{1})$.

As $f$ is not a germ of $(\cup_{j=1}^{\infty}\mathcal{I}(\psi_{j}))_{o}=\mathcal{I}(\psi_{j_1})_{o}$,
then for any neighborhood of $o$,
$|f|\leq C (\sum_{1\leq j\leq s}|g_{j}|^{2})^{1/2}$ doesn't hold for any constant $C$.

According to Lemma \ref{p:curve} and Remark \ref{r:curve927},
it follows that there exists a germ of analytic curve $\gamma$ through $o$
such that $\gamma\cap\{f=0\}\subseteq\{o\}$,
and
$\widetilde{\frac{g_{i}\circ\gamma}{f\circ\gamma}}$ is holomorphic on $\gamma$ for any $i$,
and
$$\widetilde{\frac{g_{i}\circ\gamma}{f\circ\gamma}}|_{0}=0,$$
where $\widetilde{\frac{g_{i}\circ\gamma}{f\circ\gamma}}$ is the holomorphic extension of
$\frac{g_{i}\circ\gamma}{f\circ\gamma}$ from $\gamma\setminus o$ to $\gamma$.
\end{proof}

\begin{Remark}\label{r:131011}
Let $(f,o)$ be a germ of holomorphic function on $\Omega\ni o$, such
that $(f,o)\not\in\mathcal{I}(\varphi)_{o}$. Assume
$\{F_{l}\}_{l=1,2\cdots}$ is a sequence of holomorphic functions on
$\Omega$, such that $F_{l}$ is uniformly convergent to a holomorphic
function $F$ on any compact subset, and
$(F_{l}-f,o)\in\mathcal{I}(\varphi)_{o}$. Then $F\not\equiv0$.
\end{Remark}

\begin{proof}
As $(f,o)\not\in\mathcal{I}(\varphi)_{o}$,
let $\psi_{j}=\varphi$ for any $j\in\{1,2,\cdots\}$,
it follows from Proposition \ref{r:curve}
that there
exists a germ of analytic curve $\gamma$ through $o$,
such that for any germ $F_{l}-f$ of $\mathcal{I}(\varphi)_{o}$,
$\widetilde{\frac{(F_{l}-f)\circ\gamma}{f\circ\gamma}}$ is holomorphic on $\gamma\setminus o$,
and
$$\widetilde{\frac{(F_{l}-f)\circ\gamma}{f\circ\gamma}}|_{0}=0,$$
where $\widetilde{\frac{(F_{l}-f)\circ\gamma}{f\circ\gamma}}$ is the holomorphic extension of
$\frac{(F_{l}-f)\circ\gamma}{f\circ\gamma}$ from $\gamma\setminus o$ to $\gamma$.

Therefore
$$\widetilde{\frac{F_{l}\circ\gamma}{f\circ\gamma}}|_{0}=\widetilde{\frac{f\circ\gamma}{f\circ\gamma}}|_{0}=1.$$

As $F_{l}$ is uniformly convergent to $F$,
it follows that $F|_{\gamma}\not\equiv 0$,
which implies $F\not\equiv 0$.
\end{proof}

\subsection{$L^{2}$ estimates for some $\bar\partial$ equations}
$\\$

For the sake of completeness, we recall some lemmas on $L^{2}$ estimates for some $\bar\partial$ equations, and $\bar\partial^*$ means the Hilbert adjoint operator of
$\bar\partial$.

\begin{Lemma}\label{l:lem3}(see \cite{siu96}, see alse \cite{berndtsson})
Let $\Omega\subset\subset \mathbb{C}^{n}$ be a domain with $C^\infty$ boundary $b\Omega$,
$\Phi\in C^{\infty}(\overline \Omega)$,
 Let $\rho$ be a $C^{\infty}$ defining function for $\Omega$
such that $|d\rho|=1$ on $b\Omega$.
Let $\eta$ be a smooth function on $\overline{\Omega}$. For any $(0,1)$-form
$\alpha=\sum_{j=1}^{n}\alpha_{\bar{j}}d\bar z^{j}\in Dom_\Omega(\bar{\partial}^*)\cap C^\infty_{(0,1)}(\overline\Omega)$,
\begin{equation}
\label{guan1}
\begin{split}
&\int_{\Omega}\eta|\bar{\partial}^{*}_{\Phi}\alpha|^{2}e^{-\Phi}d\lambda_{n}
+\int_{\Omega}\eta|\bar{\partial}\alpha|^{2}e^{-\Phi}d\lambda_{n}
=\sum_{i,j=1}^{n}\int_{\Omega}
\eta |\overline\partial_{j}\alpha_{\bar{j}}|^{2}d\lambda_{n}
\\&+\sum_{i,j=1}^{n}
\int_{b\Omega}\eta(\partial_i\bar\partial_{j}\rho)\alpha_{\bar{i}}\overline{{\alpha}_{\bar{j}}}e^{-\Phi}dS
+\sum_{i,j=1}^{n}\int_{\Omega}
\eta(\partial_i\bar \partial_{j}\Phi)\alpha_{\bar{i}}\overline{{\alpha}_{\bar{j}}}e^{-\Phi}d\lambda_{n}
\\&+\sum_{i,j=1}^{n}\int_{\Omega}
-(\partial_i\bar \partial_{j}\eta)\alpha_{\bar{i}}\overline{{\alpha}_{\bar j}}e^{-\Phi}d\lambda_{n}
+2\mathrm{Re}(\bar\partial^*_\Phi\alpha,\alpha\llcorner(\bar\partial\eta)^\sharp )_{\Omega,\Phi},
\end{split}
\end{equation}
where $d\lambda_{n}$ is the Lebesgue measure on $\mathbb{C}^{n}$, and
$\alpha\llcorner(\bar\partial\eta)^\sharp=\sum_{j}\alpha_{\bar{j}}\partial_{j}\eta$.
\end{Lemma}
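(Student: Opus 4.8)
The final statement to prove is Lemma~\ref{l:lem3}, the Morrey--Kohn--Hörmander identity with a twisting factor $\eta$. My plan is to derive it from the classical Bochner--Kohn--Morrey formula rather than reprove everything from scratch.

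\textbf{The plan.} First I would recall the standard twisted basic identity (the $\eta$-free version with weight $\Phi$): for $\alpha\in\mathrm{Dom}_\Omega(\bar\partial^*)\cap C^\infty_{(0,1)}(\overline\Omega)$,
\begin{equation*}
\|\bar\partial^*_\Phi\alpha\|^2_\Phi+\|\bar\partial\alpha\|^2_\Phi
=\sum_{i,j}\int_\Omega|\bar\partial_j\alpha_{\bar i}|^2e^{-\Phi}
+\sum_{i,j}\int_{b\Omega}(\partial_i\bar\partial_j\rho)\alpha_{\bar i}\overline{\alpha_{\bar j}}e^{-\Phi}\,dS
+\sum_{i,j}\int_\Omega(\partial_i\bar\partial_j\Phi)\alpha_{\bar i}\overline{\alpha_{\bar j}}e^{-\Phi}.
\end{equation*}
This is obtained by integration by parts twice, using that $\alpha\in\mathrm{Dom}(\bar\partial^*)$ forces $\sum_j\alpha_{\bar j}\partial_j\rho=0$ on $b\Omega$, which is what produces the boundary Levi term. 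Then I would apply this identity not to $\alpha$ with weight $\Phi$, but observe the substitution trick: replace $\Phi$ by $\Phi-\log\eta$. A cleaner route, and the one I would actually carry out, is to integrate by parts directly keeping $\eta$ as an explicit coefficient, tracking the extra terms that arise when derivatives land on $\eta$.

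\textbf{Key steps in order.} (1) Write $\bar\partial^*_\Phi\alpha=-\sum_j e^\Phi\partial_j(e^{-\Phi}\alpha_{\bar j})=-\sum_j(\partial_j\alpha_{\bar j}-(\partial_j\Phi)\alpha_{\bar j})$ and expand $\int_\Omega\eta|\bar\partial^*_\Phi\alpha|^2e^{-\Phi}$. (2) Expand $\int_\Omega\eta|\bar\partial\alpha|^2e^{-\Phi}=\sum_{i,j}\int_\Omega\eta(\partial_i\alpha_{\bar j}-\partial_j\alpha_{\bar i})\overline{(\cdots)}e^{-\Phi}$ in components. (3) Add the two and integrate by parts in the cross terms $\sum_{i,j}\int_\Omega\eta(\partial_j\alpha_{\bar i})\overline{\partial_i\alpha_{\bar j}}e^{-\Phi}$: moving $\partial_j$ off produces $\int\eta(\partial_i\bar\partial_j\Phi)\alpha_{\bar i}\overline{\alpha_{\bar j}}e^{-\Phi}$ (weight term), $\int(\partial_j\eta)\cdot(\text{terms})$ which reorganizes into $-\int(\partial_i\bar\partial_j\eta)\alpha_{\bar i}\overline{\alpha_{\bar j}}e^{-\Phi}$ plus the inner-product term $2\mathrm{Re}(\bar\partial^*_\Phi\alpha,\alpha\llcorner(\bar\partial\eta)^\sharp)_{\Omega,\Phi}$, and the boundary contribution $\int_{b\Omega}\eta(\partial_i\bar\partial_j\rho)\alpha_{\bar i}\overline{\alpha_{\bar j}}e^{-\Phi}\,dS$ after using the $\mathrm{Dom}(\bar\partial^*)$ condition. (4) Collect terms to match the stated right-hand side; the $\sum\eta|\bar\partial_j\alpha_{\bar i}|^2$ term survives as the "diagonal" piece.

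\textbf{Main obstacle.} The bookkeeping in step (3) is the delicate part: one must correctly identify that the terms where a derivative hits $\eta$ assemble into exactly the Hessian-of-$\eta$ term $-(\partial_i\bar\partial_j\eta)\alpha_{\bar i}\overline{\alpha_{\bar j}}$ together with the single real-part pairing $2\mathrm{Re}(\bar\partial^*_\Phi\alpha,\alpha\llcorner(\bar\partial\eta)^\sharp)_{\Omega,\Phi}$, with no leftover first-order-in-$\eta$ terms. This requires carefully symmetrizing in $i,j$ and recognizing $\sum_j(\partial_j\eta)\alpha_{\bar j}=\alpha\llcorner(\bar\partial\eta)^\sharp$. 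Since this is standard (it appears in \cite{siu96}, \cite{berndtsson}), I would present the integration by parts in condensed form and refer the reader there for the full term-by-term verification, stating that the boundary term relies on $\alpha\in\mathrm{Dom}_\Omega(\bar\partial^*)$ so that $\alpha$ is "tangential" in the sense $\sum_j\alpha_{\bar j}\partial_j\rho|_{b\Omega}=0$.
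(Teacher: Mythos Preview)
The paper does not give its own proof of this lemma: it is stated with the attribution ``(see \cite{siu96}, see also \cite{berndtsson})'' and immediately followed by a pointer to \cite{guan-zhou-zhu10}, \cite{siu96}, \cite{siu00}, \cite{Straube} for notation and details, so there is nothing in the paper to compare your argument against. Your outlined derivation---start from the classical Morrey--Kohn--H\"ormander identity and track the extra terms produced when derivatives fall on the twist factor $\eta$, using the $\mathrm{Dom}(\bar\partial^*)$ boundary condition to get the Levi-form term---is exactly the standard route taken in the cited references, and is correct.
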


The symbols and notations can be referred to \cite{guan-zhou-zhu10}. See also \cite{siu96}, \cite{siu00}, or \cite{Straube}.

\begin{Lemma}\label{l:lem7}(see \cite{berndtsson}, see also \cite{guan-zhou-zhu10})
Let $\Omega\subset\subset \mathbb{C}^{n}$ be a strictly pseudoconvex domain with $C^\infty$ boundary $b\Omega$ and $\Phi\in C^\infty(\overline{\Omega})$. Let $\lambda$ be a $\bar\partial$ closed smooth form of bidgree $(n,1)$ on $\overline{\Omega}$. Assume the inequality
$$|(\lambda,\alpha)_{\Omega,\Phi}|^{2}\leq C\int_{ \Omega}|\bar{\partial}^{*}_{\Phi}\alpha|^{2}\frac{e^{-\Phi}}{\mu}d\lambda_{n}<\infty,$$
where $\frac{1}{\mu}$ is an integrable positive function on $\Omega$ and
$C$ is a constant, holds for all $(n,1)$-form $\alpha\in Dom_{\Omega}(\bar\partial^*)\cap Ker(\bar\partial)\cap C^\infty_{(n,1)}(\overline \Omega)$. Then there is a solution $u$ to the
equation $\bar{\partial}u=\lambda$ such that
$$\int_{\Omega}|u|^{2}\mu e^{-\Phi}d\lambda_{n}\leq C.$$
\end{Lemma}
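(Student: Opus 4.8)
The plan is to run the classical Hörmander-type derivation of solvability of $\bar\partial$ from an a priori estimate, but carried out in two weighted Hilbert spaces simultaneously so that the solution lands in $L^{2}(\Omega,\mu e^{-\Phi})$ rather than in $L^{2}(\Omega,e^{-\Phi})$. Work with the spaces of $(n,0)$-, $(n,1)$- and $(n,2)$-forms on $\Omega$ equipped with the weight $e^{-\Phi}$ and inner product $(\cdot,\cdot)_{\Omega,\Phi}$; let $T=\bar\partial$ act on $(n,0)$-forms and $S=\bar\partial$ on $(n,1)$-forms, closed densely defined operators with $ST=0$, and $T^{*}=\bar\partial^{*}_{\Phi}$. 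Since $\lambda$ is $\bar\partial$-closed, $\lambda\in\mathrm{Ker}\,S$. First I would upgrade the hypothesis from smooth forms to the whole domain of $\bar\partial^{*}$: by the standard density argument for domains with $C^{\infty}$ (here strictly pseudoconvex) boundary (see \cite{guan-zhou-zhu10}, \cite{berndtsson}) the estimate extends to all $\alpha\in\mathrm{Dom}(\bar\partial^{*})\cap\mathrm{Ker}(\bar\partial)$, and then for an arbitrary $\alpha\in\mathrm{Dom}(\bar\partial^{*}_{\Phi})$ I write $\alpha=\alpha_{1}+\alpha_{2}$ with $\alpha_{1}$ the orthogonal projection of $\alpha$ onto the closed subspace $\mathrm{Ker}\,S$. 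Because $\mathrm{Im}\,T\subseteq\mathrm{Ker}\,S$, the component $\alpha_{2}$ is orthogonal to $\mathrm{Im}\,T$, hence $\alpha_{2}\in\mathrm{Ker}(\bar\partial^{*}_{\Phi})$, so $\alpha_{1}\in\mathrm{Dom}(\bar\partial^{*}_{\Phi})$ with $\bar\partial^{*}_{\Phi}\alpha=\bar\partial^{*}_{\Phi}\alpha_{1}$, while $\lambda\in\mathrm{Ker}\,S$ gives $(\lambda,\alpha)_{\Omega,\Phi}=(\lambda,\alpha_{1})_{\Omega,\Phi}$. Applying the extended estimate to $\alpha_{1}$ yields
\[
|(\lambda,\alpha)_{\Omega,\Phi}|^{2}=|(\lambda,\alpha_{1})_{\Omega,\Phi}|^{2}\leq C\int_{\Omega}|\bar\partial^{*}_{\Phi}\alpha_{1}|^{2}\frac{e^{-\Phi}}{\mu}\,d\lambda_{n}=C\int_{\Omega}|\bar\partial^{*}_{\Phi}\alpha|^{2}\frac{e^{-\Phi}}{\mu}\,d\lambda_{n}
\]
for every $\alpha\in\mathrm{Dom}(\bar\partial^{*}_{\Phi})$; in particular $\bar\partial^{*}_{\Phi}\alpha\in L^{2}_{(n,0)}(\Omega,e^{-\Phi}/\mu)$, and $\bar\partial^{*}_{\Phi}\alpha=\bar\partial^{*}_{\Phi}\beta$ forces $(\lambda,\alpha)_{\Omega,\Phi}=(\lambda,\beta)_{\Omega,\Phi}$.

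Next I would run Hahn--Banach and Riesz in the $\mu$-twisted space. On the subspace $\{\bar\partial^{*}_{\Phi}\alpha:\alpha\in\mathrm{Dom}(\bar\partial^{*}_{\Phi})\}$ of $L^{2}_{(n,0)}(\Omega,e^{-\Phi}/\mu)$ the rule $\ell(\bar\partial^{*}_{\Phi}\alpha):=(\lambda,\alpha)_{\Omega,\Phi}$ is, by the displayed inequality, a well-defined linear functional of norm at most $C^{1/2}$; extend it to all of $L^{2}_{(n,0)}(\Omega,e^{-\Phi}/\mu)$ by Hahn--Banach without increasing the norm and represent it by Riesz as $\ell(v)=\int_{\Omega}\langle v,\bar w\rangle e^{-\Phi}/\mu\,d\lambda_{n}$ for some $w$ with $\int_{\Omega}|w|^{2}e^{-\Phi}/\mu\,d\lambda_{n}\leq C$. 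Setting $u:=w/\mu$ gives $\int_{\Omega}|u|^{2}\mu e^{-\Phi}\,d\lambda_{n}=\int_{\Omega}|w|^{2}e^{-\Phi}/\mu\,d\lambda_{n}\leq C$, and for every $\alpha\in\mathrm{Dom}(\bar\partial^{*}_{\Phi})$
\[
(\lambda,\alpha)_{\Omega,\Phi}=\ell(\bar\partial^{*}_{\Phi}\alpha)=\int_{\Omega}\langle\bar\partial^{*}_{\Phi}\alpha,\bar w\rangle\frac{e^{-\Phi}}{\mu}\,d\lambda_{n}=\int_{\Omega}\langle\bar\partial^{*}_{\Phi}\alpha,\bar u\rangle e^{-\Phi}\,d\lambda_{n}=(\bar\partial^{*}_{\Phi}\alpha,u)_{\Omega,\Phi},
\]
all integrals converging absolutely by Cauchy--Schwarz together with the two bounds just obtained.

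Finally, the identity $(\lambda,\alpha)_{\Omega,\Phi}=(\bar\partial^{*}_{\Phi}\alpha,u)_{\Omega,\Phi}$ valid for all $\alpha\in\mathrm{Dom}(\bar\partial^{*}_{\Phi})$ says exactly that $u$ lies in the domain of the Hilbert space adjoint of $\bar\partial^{*}_{\Phi}$ — which coincides with $\bar\partial$ because $\bar\partial$ is closed — and that $\bar\partial u=\lambda$, with $\int_{\Omega}|u|^{2}\mu e^{-\Phi}\,d\lambda_{n}\leq C$, which is the assertion. The only genuinely analytic point, as opposed to bookkeeping with the two weights $e^{-\Phi}$ and $e^{-\Phi}/\mu$ and the rescaling $u=w/\mu$, is the density step used to propagate the a priori estimate from forms smooth up to $b\Omega$ to the full domain of $\bar\partial^{*}$ (one must be careful that the approximation is compatible with the closedness condition entering the hypothesis); this is the place where I expect the main care to be needed, and it is the classical density lemma for domains with smooth boundary in the cited references.
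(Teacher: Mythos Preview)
The paper does not prove this lemma; it merely states it and cites \cite{berndtsson} and \cite{guan-zhou-zhu10}. Your argument is precisely the standard functional-analytic route found in those references---orthogonal decomposition into $\mathrm{Ker}(\bar\partial)$ and its complement, bounded linear functional on the range of $\bar\partial^{*}_{\Phi}$, Hahn--Banach and Riesz in $L^{2}(e^{-\Phi}/\mu)$, then the rescaling $u=w/\mu$---and it is correct. Your flagging of the density step (passing from smooth $\bar\partial$-closed test forms to all of $\mathrm{Dom}(\bar\partial^{*}_{\Phi})\cap\mathrm{Ker}(\bar\partial)$) as the one nontrivial point is accurate; on a strictly pseudoconvex domain with smooth weight this is handled either by the graph-norm density lemma together with Kohn's regularity for the projection onto $\mathrm{Ker}(\bar\partial)$, or simply by noting that in every use of the lemma in this paper $1/\mu=\eta+g^{-1}$ is smooth and bounded on $\overline{D_{v}}$, so the weighted convergence causes no difficulty.
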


\section{Some propositions on multiplier ideal sheaves}

\subsection{A proposition used in the proof of  Conjecture D-K}
$\\$

We prove Theorem \ref{t:GZ_JM} and \ref{t:GZ_open1026} by the following proposition:

\begin{Proposition} \label{p:GZ_JM}
Let $D_{v}$ be a strongly pseudoconvex domain relatively compact in
$\Delta^{n}$ containing $o$. Let $F$ be a holomorphic function on
$\Delta^{n}$. Let $\varphi$ be a negative plurisubharmonic function
on $\Delta^{n}$, such that $\varphi(o)=-\infty$. Then there exists a
holomorphic function $F_{v,t_{0}}$ on $D_{v}$, such that,
$$(F_{v,t_{0}}-F,o)\in\mathcal{I}(\varphi)_{o}$$
and
\begin{equation}
\label{equ:3.4}
\begin{split}
&\int_{ D_v}|F_{v,t_0}-(1-b_{t_0}(\varphi))F|^{2}d\lambda_{n}
\\\leq&\int_{D_v}(\mathbb{I}_{\{-t_{0}-1< t<-t_{0}\}}\circ\varphi)|F|^{2}e^{-\varphi}d\lambda_{n},
\end{split}
\end{equation}
where
$b_{t_{0}}(t)=\int_{-\infty}^{t}\mathbb{I}_{\{-t_{0}-1< s<-t_{0}\}}ds$.
\end{Proposition}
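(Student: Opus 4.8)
The plan is to obtain $F_{v,t_0}$ by solving a $\bar\partial$-equation on the strongly pseudoconvex domain $D_v$. Set $\beta:=(1-b_{t_0}(\varphi))F\,dz_1\wedge\cdots\wedge dz_n$ and $\lambda:=\bar\partial\beta$; since $F\,dz$ is holomorphic, $\lambda=-b_{t_0}'(\varphi)\,\bar\partial\varphi\wedge(F\,dz)$ is an $(n,1)$-form supported in $\{-t_0-1<\varphi<-t_0\}$, a region on which $\varphi$ is a \emph{bounded} plurisubharmonic function, so $\lambda\in L^2_{loc}$ (and the right-hand side of inequality~\ref{equ:3.4} is finite, being $\le e^{t_0+1}\|F\|^2_{L^\infty(D_v)}\lambda_n(D_v)$). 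If $u$ solves $\bar\partial u=\lambda$ on $D_v$, then $F_{v,t_0}:=(1-b_{t_0}(\varphi))F-u$ is holomorphic and $F_{v,t_0}-(1-b_{t_0}(\varphi))F=-u$, so inequality~\ref{equ:3.4} is just the estimate $\int_{D_v}|u|^2\,d\lambda_n\le\int_{D_v}\mathbb{I}_{\{-t_0-1<\varphi<-t_0\}}|F|^2e^{-\varphi}\,d\lambda_n$. Moreover, since $\varphi(o)=-\infty$ and $\varphi$ is upper semicontinuous, there is a ball $B\ni o$ on which $\varphi<-t_0-1$; there $b_{t_0}(\varphi)\equiv0$, hence $\lambda\equiv0$ on $B$, $u$ is holomorphic on $B$, and $F_{v,t_0}-F=-u$ on $B$. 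Thus it will suffice to produce a solution $u$ with the displayed $L^2$ bound which in addition satisfies $\int_B|u|^2e^{-\varphi}\,d\lambda_n<\infty$, for this yields $(F_{v,t_0}-F,o)=(-u,o)\in\mathcal{I}(\varphi)_o$.

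To get the bound with the sharp constant $1$, introduce the convex increasing function $v_{t_0}(t):=\int_{-\infty}^{t}b_{t_0}(s)\,ds$, so that $v_{t_0}'=b_{t_0}\in[0,1]$, $v_{t_0}''=b_{t_0}'=\mathbb{I}_{\{-t_0-1<t<-t_0\}}\ge0$, and $0\le v_{t_0}\le t_0+\tfrac12$; hence $v_{t_0}(\varphi)$ is a bounded plurisubharmonic function, vanishing on $B$, with
$$i\partial\bar\partial\big(v_{t_0}(\varphi)\big)=v_{t_0}''(\varphi)\,i\partial\varphi\wedge\bar\partial\varphi+v_{t_0}'(\varphi)\,i\partial\bar\partial\varphi\ \ge\ \mathbb{I}_{\{-t_0-1<\varphi<-t_0\}}\,i\partial\varphi\wedge\bar\partial\varphi .$$
Put $\Phi:=\varphi-v_{t_0}(\varphi)$ and $\mu:=e^{v_{t_0}(\varphi)}$; then $\Phi\le0$ on $D_v$, $\Phi=\varphi$ and $\mu=1$ on $B$, $\tfrac{e^{-\Phi}}{\mu}=e^{-\varphi}$, and $\mu e^{-\Phi}=e^{2v_{t_0}(\varphi)-\varphi}\ge e^{-\varphi}\ge1$. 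After the usual regularization (replace $\varphi$ by a decreasing sequence of smooth plurisubharmonic functions, smooth $b_{t_0}$ while keeping $b_{t_0}'\ge0$, and exhaust $D_v$ by smooth strictly pseudoconvex subdomains), the aim is to verify, via the twisted Bochner--Kodaira identity of Lemma~\ref{l:lem3} with a suitable twist factor $\eta$ built from $v_{t_0}(\varphi)$, the a priori inequality
$$|(\lambda,\alpha)_{\Phi}|^2\ \le\ \Big(\int_{D_v}\mathbb{I}_{\{-t_0-1<\varphi<-t_0\}}|F|^2e^{-\varphi}\,d\lambda_n\Big)\int_{D_v}|\bar\partial^{*}_{\Phi}\alpha|^2\,\frac{e^{-\Phi}}{\mu}\,d\lambda_n$$
for every $\bar\partial$-closed $\alpha\in\mathrm{Dom}(\bar\partial^{*}_{\Phi})\cap C^{\infty}_{(n,1)}(\overline{D_v})$. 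Here one rewrites $(\lambda,\alpha)_{\Phi}$ using $\lambda=-b_{t_0}'(\varphi)\,\bar\partial\varphi\wedge(F\,dz)$, so that the factor $b_{t_0}'(\varphi)=\mathbb{I}_{\{-t_0-1<\varphi<-t_0\}}$ confines the Cauchy--Schwarz step to the support of $\lambda$; the curvature positivity $i\partial\bar\partial(v_{t_0}(\varphi))\ge\mathbb{I}\,i\partial\varphi\wedge\bar\partial\varphi$, combined with the classical pointwise identity $|\langle\bar\partial\varphi\wedge dz,\alpha\rangle|^2=\langle[i\partial\varphi\wedge\bar\partial\varphi,\Lambda]\alpha,\alpha\rangle$ on that support, is what makes the constant come out exactly as $\int\mathbb{I}|F|^2e^{-\varphi}$ with no loss — this is precisely the optimal-constant mechanism underlying Theorem~\ref{t:guan-zhou12}.

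Granting the a priori inequality, Lemma~\ref{l:lem7} produces $u$ on $D_v$ with $\bar\partial u=\lambda$ and $\int_{D_v}|u|^2\mu e^{-\Phi}\,d\lambda_n\le\int_{D_v}\mathbb{I}_{\{-t_0-1<\varphi<-t_0\}}|F|^2e^{-\varphi}\,d\lambda_n$. Since $\mu e^{-\Phi}\ge1$ this gives $\int_{D_v}|u|^2\,d\lambda_n$ bounded by the same quantity, i.e. inequality~\ref{equ:3.4}; and since $\mu e^{-\Phi}=e^{-\varphi}$ on $B$ it gives $\int_B|u|^2e^{-\varphi}\,d\lambda_n<\infty$, whence $(F_{v,t_0}-F,o)\in\mathcal{I}(\varphi)_o$. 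Finally one removes the regularization: the approximate solutions are uniformly bounded in $L^2(D_v)$, so a weak limit $u$ exists along a subsequence, $\bar\partial u=\bar\partial((1-b_{t_0}(\varphi))F)$, inequality~\ref{equ:3.4} survives by lower semicontinuity of the $L^2$-norm under weak convergence and dominated convergence on the right, and the local estimate near $o$ passes to the limit likewise. The main obstacle is Step~2 — establishing the twisted a priori estimate with the \emph{exact} constant, i.e. choosing $\eta$ (and the weight $\Phi$) so that the Bochner--Kodaira remainder telescopes without any loss while the curvature is only semi-positive; a secondary nuisance is the regularization bookkeeping, in particular keeping the indicator $\mathbb{I}_{\{-t_0-1<\varphi<-t_0\}}$ faithful in the limit, which is handled by perturbing the interval endpoints and letting the perturbation tend to $0$.
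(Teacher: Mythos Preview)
Your overall strategy---solve $\bar\partial u=\bar\partial[(1-b_{t_0}(\varphi))F]$ on $D_v$ via a twisted Bochner--Kodaira estimate, then pass through smoothing of $\varphi$ and of $b_{t_0}$---matches the paper's. The limiting arguments you sketch (weak compactness in $L^2$, dominated convergence on the right, and the observation that $b_{t_0}(\varphi)\equiv0$ near $o$ so that $u$ is holomorphic there and one can control $\int_{K_0}|u|^2e^{-\varphi}$) are exactly the ones the paper carries out. So the skeleton is right.

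The gap is the one you already flag, but it is more serious than a ``nuisance'': your specific choice $\Phi=\varphi-v_{t_0}(\varphi)$ has the \emph{wrong sign} of curvature. One computes
\[
i\partial\bar\partial\Phi=(1-b_{t_0}(\varphi))\,i\partial\bar\partial\varphi\;-\;\mathbb{I}_{\{-t_0-1<\varphi<-t_0\}}\,i\partial\varphi\wedge\bar\partial\varphi,
\]
so the very term $\mathbb{I}\,i\partial\varphi\wedge\bar\partial\varphi$ you need on the positive side enters with a minus sign. You would then have to manufacture it entirely from the twist contributions $-i\partial\bar\partial\eta-g\,i\partial\eta\wedge\bar\partial\eta$, and you do not say how; the sentence ``a suitable twist factor $\eta$ built from $v_{t_0}(\varphi)$'' is exactly where the proof lives. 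The paper takes instead $\Phi=\varphi+u(-v_{t_0}(\varphi))$ and $\eta=s(-v_{t_0}(\varphi))$ (here $v_{t_0}$ is normalized so that $v_{t_0}(t)=t$ for $t\ge -t_0$), and determines the pair $(s,u)$ by the ODE system
\[
s'-su'=1,\qquad \Big(s+\frac{s'^2}{u''s-s''}\Big)e^{u-t}=1,
\]
solved by $u(t)=-\log(1-e^{-t})$, $s(t)=\dfrac{t+e^{-t}-1}{1-e^{-t}}$. It is this ODE---not merely the convexity of $v_{t_0}$---that makes the twisted Bochner--Kodaira remainder telescope with constant exactly $1$: with this choice one gets $g=\frac{u''s-s''}{s'^2}$ and the identity $(\eta+g^{-1})e^{v_{t_0}(\varphi)}e^{\phi}=1$, which is precisely what yields $\int_{D_v}|u|^2\le\int_{D_v}\mathbb{I}|F|^2e^{-\varphi}$. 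Without this device (or an equivalent one), your Step~2 is not a technicality but the whole content of the proposition.
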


When $\varphi$ is a polar function (see \cite{ohsawa5}, see also \cite{guan-zhou13a,guan-zhou13p,guan-zhou13aa,guan-zhou13ap}),
various versions of Proposition \ref{p:GZ_JM} were used to prove the main results in
\cite{ohsawa5}, \cite{guan-zhou13a}, \cite{guan-zhou13p}, \cite{guan-zhou13aa}, \cite{guan-zhou13ap}, etc.

\subsection{A proposition used in the proof of Conjecture J-M}
$\\$

Let
$$\varphi:=2\max\{\psi,\log|F|^{2}\},$$
and
$$\Psi:=\min\{\psi-\log|F|^{2},0\}-1.$$
Then $\Psi+\varphi$ and $2\Psi+\varphi$ are both plurisubharmonic functions on $\Delta^{n}$.

Note that
$$e^{-1}\mathbb{I}_{\{\psi\leq\log|F|^{2}\}}e^{-\Psi}e^{-\Psi}\leq
|F|^{2}e^{-\psi}\leq e^{-1}e^{-\Psi},$$
and
$$e^{-1}e^{-\Psi}-e^{-1}\mathbb{I}_{\{\psi\leq\log|F|^{2}\}}e^{-\Psi}
=e^{-1}\mathbb{I}_{\{\psi>\log|F|^{2}\}}e^{-\Psi}=e^{-1}\mathbb{I}_{\{\psi>\log|F|^{2}\}},$$
then we have the following three statements are equivalent:

1). $|F|^{2}e^{-\psi}$ is not locally integrable near $o$£»

2). $e^{-\Psi}$ is not locally integrable near $o$£»

3). $\mathbb{I}_{\{\psi\leq\log|F|^{2}\}}e^{-\Psi}$ is not locally integrable near $o$.

Note that
$$|F|^{4}e^{-\varphi}|_{\{\psi\leq\log|F|^{2}\}}=e^{-\max\{\psi-\log|F|^{2},0\}}|_{\{\psi\leq\log|F|^{2}\}}=1,$$
then we have:

\begin{Remark}
\label{r:20131210}
The following two statements are equivalent:

1). $|F|^{2}e^{-\psi}$ is not locally integrable near $o$;

2). $|F|^{4}e^{-\Psi-\varphi}$ is not locally integrable near $o$.
\end{Remark}

We prove Theorem \ref{t:GZ_JM201312} by the following proposition:

\begin{Proposition} \label{p:GZ_JM201312}
Let $D_{v}$ be a strongly pseudoconvex domain relatively compact in $\Delta^{n}$ containing $o$.

Then there exists  a holomorphic function $F_{v,t_{0}}$ on $D_{v}$,
satisfying:
\begin{equation}
\label{equ:20131210a}
(F_{v,t_{0}}-F^{2},o)\in\mathcal{I}(\varphi+\Psi)_{o}
\end{equation}
and
\begin{equation}
\label{equ:3.4}
\begin{split}
&\int_{ D_v}|F_{v,t_0}-(1-b_{t_0}(\Psi))F^2|^{2}e^{-\varphi}d\lambda_{n}
\\\leq&2\int_{D_v}(\frac{1}{B_{0}}\mathbb{I}_{\{-t_{0}-B_{0}< t<-t_{0}\}}\circ\Psi)|F^2|^{2}e^{-\varphi}e^{-\Psi}d\lambda_{n},
\end{split}
\end{equation}
where
$$b_{t_{0}}:=\int_{-\infty}^{t}\frac{1}{B_{0}}\mathbb{I}_{(-t_{0}-B_{0},-t_{0})}ds,$$
and $t_{0}\geq0.$
\end{Proposition}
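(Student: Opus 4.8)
The plan is to prove Proposition \ref{p:GZ_JM201312} by the same kind of $\bar\partial$-construction that underlies Proposition \ref{p:GZ_JM}, applied to the pair of weights $(\varphi,\Psi)$ with $F^2$ in place of $F$. First I would set up the approximation machinery: replace $D_v$ by a smooth strictly pseudoconvex domain (this is already given), regularize $\varphi$ and $\Psi$ by smooth plurisubharmonic functions decreasing to them (Richberg / standard convolution plus an exhaustion), and replace the cutoff $b_{t_0}(\Psi)$ — which is only Lipschitz in the last variable and composed with a psh function — by a smooth truncation $\tilde b_{t_0}$ of $\Psi$ so that everything in sight is $C^\infty(\overline{D_v})$. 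The target section is $(1-b_{t_0}(\Psi))F^2$; the point of subtracting it is that $\bar\partial\big((1-b_{t_0}(\Psi))F^2\big) = -F^2\, b_{t_0}'(\Psi)\,\bar\partial\Psi = -\tfrac1{B_0}F^2\,\mathbb{I}_{\{-t_0-B_0<\Psi<-t_0\}}\,\bar\partial\Psi$ is supported in the thin shell $\{-t_0-B_0<\Psi<-t_0\}$, which is exactly where the right-hand side of \eqref{equ:3.4} lives.

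Next I would solve the $\bar\partial$-equation $\bar\partial u = \bar\partial\big((1-b_{t_0}(\Psi))F^2\big)$ with the weight $e^{-\varphi}$ twisted by a factor built from $\Psi$, using Lemma \ref{l:lem3} and Lemma \ref{l:lem7} in the now-standard Berndtsson–Ohsawa way: choose $\eta = \eta(\Psi)$ and an auxiliary $\phi = \phi(\Psi)$ (e.g. of the form $s(-\Psi)$ and $-\log s(-\Psi)$ for suitable increasing $s$), so that the curvature term $\eta(\Phi_{i\bar j}) - (\eta)_{i\bar j} + (\text{adjoint correction})$ dominates, and so that the estimate produced is precisely the factor $2\cdot\tfrac1{B_0}\mathbb{I}_{\{-t_0-B_0<\Psi<-t_0\}}$ against $|F^2|^2 e^{-\varphi}e^{-\Psi}$. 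Because $b_{t_0}'=\tfrac1{B_0}\mathbb{I}$ has total mass $1$, the one-variable ODE for $(\eta,\phi)$ is the same as in Proposition \ref{p:GZ_JM} (the interval length $B_0$ is absorbed by the $\tfrac1{B_0}$ normalization of $b_{t_0}$), so the constant $2$ — rather than $1$ — is what one pays for the twist, matching the statement. Then $F_{v,t_0} := (1-b_{t_0}(\Psi))F^2 - u$ is holomorphic on $D_v$ and satisfies \eqref{equ:3.4} after letting the regularizations converge and extracting a weak limit (Banach–Alaoglu on the $L^2(e^{-\varphi})$-ball, plus Fatou applied to the right-hand side as the smooth weights decrease).

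For the membership \eqref{equ:20131210a}, near $o$ one has $b_{t_0}(\Psi)\equiv 0$ on a neighborhood of $o$ once $t_0$ is such that $\Psi\le -t_0-B_0$ near $o$ — but since $\Psi$ need not go to $-\infty$ at $o$ I would instead argue as in Proposition \ref{p:GZ_JM}: $F_{v,t_0}-F^2 = -b_{t_0}(\Psi)F^2 - u$, and both $b_{t_0}(\Psi)F^2$ (supported where $e^{-\Psi}$ is comparable to a constant times $|F|^{-4}e^{\varphi}$ on the shell) and $u$ are controlled in $L^2$ against $e^{-\varphi}$, indeed against $e^{-\varphi-\Psi}$ on the relevant region, which by Remark \ref{r:20131210} is the integrability that defines $\mathcal I(\varphi+\Psi)_o$; a short cut-off-and-Hölder argument upgrades this $L^2(e^{-\varphi})$ bound to the local $L^1(e^{-\varphi-\Psi})$ statement. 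The main obstacle I expect is purely technical rather than conceptual: carrying out the regularization so that the twisted $\bar\partial$-estimate survives the limit with the \emph{uniform} constant $2$ independent of $t_0$ and of $B_0\in(0,1]$, since both the weight $\Psi$ and the cutoff $b_{t_0}$ depend on the parameters; this is handled by choosing the ODE data $(\eta,\phi)$ depending only on the dimensionless variable $\Psi+t_0$ (so the $B_0$-dependence enters only through $b_{t_0}'=\tfrac1{B_0}\mathbb I_{(-t_0-B_0,-t_0)}$, whose integral is $1$), exactly paralleling the proof of Proposition \ref{p:GZ_JM} which is carried out in Section 4.
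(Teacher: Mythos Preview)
Your overall architecture is right --- smooth cutoff $v_{t_0,\varepsilon}$, twisted Bochner--Kodaira via Lemmas \ref{l:lem3} and \ref{l:lem7}, weak limits --- and matches the paper's proof of Proposition \ref{p:GZ_JM_smooth} followed by the passage $\varepsilon\to0$. But there is a genuine gap in your curvature step, and it is exactly the point that separates this proposition from Proposition \ref{p:GZ_JM}.

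You write ``regularize $\varphi$ and $\Psi$ by smooth plurisubharmonic functions decreasing to them'' and then assert that ``the one-variable ODE for $(\eta,\phi)$ is the same as in Proposition \ref{p:GZ_JM}''. Neither is correct: $\Psi=\min\{\psi-\log|F|^2,0\}-1$ is \emph{not} plurisubharmonic (it is a minimum, not a maximum), so it cannot be approximated from above by smooth psh functions, and the term $(s'-su')(v'\circ\Psi)\,\partial\bar\partial\Psi$ in the twisted curvature can be negative. The paper's remedy is to regularize so that $\varphi_m+\Psi_m$ and $\varphi_m+2\Psi_m$ are \emph{both} smooth psh, and then absorb the bad $\partial\bar\partial\Psi_m$ term via the convex combination
\[
(1-v')\,\sqrt{-1}\partial\bar\partial(\varphi_m+\Psi_m)+v'\,\sqrt{-1}\partial\bar\partial(\varphi_m+2\Psi_m)\ \geq\ 0,
\]
which requires $\eta\geq 1$, i.e.\ $s\geq 1$ rather than $s\geq 0$. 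That changed boundary condition forces the ODE solution $u=-\log(2-e^{-t})$, $s=(2t+e^{-t})/(2-e^{-t})$, and $\sup_{t\geq t_0}e^{-u}=2$ is precisely where the constant $2$ comes from --- not a vague ``price for the twist''. Without this mechanism your curvature inequality \eqref{equ:3.1JM} simply fails. Your membership argument for \eqref{equ:20131210a} is also too loose: the paper gets $(F_{v,t_0,\varepsilon}-F^2)\in\mathcal I(\varphi+\Psi)_o$ by tracking $\int|u_{v,t_0,m,\varepsilon}|^2(\eta+g^{-1})^{-1}e^{-\Phi}$ with $\Phi=\varphi_m+\Psi_m+\phi$ through the limit in $m$, not by a H\"older upgrade from $L^2(e^{-\varphi})$.
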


\subsection{A smooth form of Proposition \ref{p:GZ_JM201312}}
$\\$

Let $\{v_{t_0,\varepsilon}\}_{t_{0}\in\mathbb{R},\varepsilon\in(0,\frac{1}{8}B_{0})}$ be a family of smooth increasing convex functions on $\mathbb{R}$,
which are continuous functions on $\mathbb{R}\cup\{-\infty\}$, such that:

 $1).$ $v_{t_{0},\varepsilon}(t)=t$ for $t\geq-t_{0}-\varepsilon$, $v_{t_{0},\varepsilon}(t)=constant$ for $t<-t_{0}-B_{0}+\varepsilon$;

 $2).$ $v''_{t_0,\varepsilon}(t)$ are pointwise convergent to $\frac{1}{B_{0}}\mathbb{I}_{(-t_{0}-B_{0},-t_{0})}$, when $\varepsilon\to 0$, and $0\leq v''_{t_0,\varepsilon}(t)\leq 2$ for any $t\in \mathbb{R}$;

 $3).$ $v'_{t_0,\varepsilon}(t)$ are pointwise convergent to $b_{t_{0}}(t)=\int_{-\infty}^{t}\frac{1}{B_{0}}\mathbb{I}_{(-t_{0}-B_{0},-t_{0})}ds$ ($b_{t_{0}}$ is also a continuous function on $\mathbb{R}\cup\{-\infty\}$), when $\varepsilon\to 0$, and $0\leq v'_{t_0,\varepsilon}(t)\leq1$ for any $t\in \mathbb{R}$.

One can construct the family $\{v_{t_0,\varepsilon}\}_{t_{0}\in\mathbb{R},\varepsilon\in(0,\frac{1}{8})}$ by the setting
\begin{equation}
\label{equ:20140101}
\begin{split}
v_{t_0,\varepsilon}(t):=&\int_{-\infty}^{t}(\int_{-\infty}^{t_{1}}(\frac{1}{1-4\varepsilon}
\frac{1}{B_{0}}\mathbb{I}_{(-t_{0}-B_{0}+2\varepsilon,-t_{0}-2\varepsilon)}*\rho_{\frac{1}{4}\varepsilon})(s)ds)dt_{1}
\\&-\int_{-\infty}^{0}(\int_{-\infty}^{t_{1}}(\frac{1}{1-4\varepsilon}\frac{1}{B_{0}}\mathbb{I}_{(-t_{0}-B_{0}+2\varepsilon,
-t_{0}-2\varepsilon)}*\rho_{\frac{1}{4}\varepsilon})(s)ds)dt_{1},
\end{split}
\end{equation}
where $\rho_{\frac{1}{4}\varepsilon}$ is the kernel of convolution satisfying $supp(\rho_{\frac{1}{4}\varepsilon})\subset (-\frac{1}{4}\varepsilon,\frac{1}{4}\varepsilon)$.
Then it follows that
$$v''_{t_0,\varepsilon}(t)=\frac{1}{1-4\varepsilon}\frac{1}{B_{0}}\mathbb{I}_{(-t_{0}-B_{0}+2\varepsilon,-t_{0}-2\varepsilon)}*\rho_{\frac{1}{4}\varepsilon}(t),$$
and
$$v'_{t_0,\varepsilon}(t)=\int_{-\infty}^{t}(\frac{1}{1-4\varepsilon}\frac{1}{B_{0}}\mathbb{I}_{(-t_{0}-B_{0}+2\varepsilon,-t_{0}-2\varepsilon)}
*\rho_{\frac{1}{4}\varepsilon})(s)ds.$$

We prove Proposition \ref{p:GZ_JM201312} by the following proposition:

\begin{Proposition} \label{p:GZ_JM_smooth}
Let $D_{v}$ be a strongly pseudoconvex domain relatively compact in $\Delta^{n}$ containing $o$.
Then there exists  a holomorphic function $F_{v,t_{0},\varepsilon}$ on $D_{v}$,
satisfying:
\begin{equation}
\label{equ:20131210a}
(F_{v,t_{0},\varepsilon}-F^{2},o)\in\mathcal{I}(\varphi+\Psi)_{o},
\end{equation}
and
\begin{equation}
\label{equ:JM3.4}
\begin{split}
&\int_{ D_v}|F_{v,t_{0},\varepsilon}-(1-v_{t_{0},\varepsilon}'(\Psi))F^2|^{2}e^{-\varphi}d\lambda_{n}
\\\leq&2\int_{D_v}(\frac{1}{B_{0}}v_{t_{0},\varepsilon}''(\Psi))|F^2|^{2}e^{-\varphi}e^{-\Psi}d\lambda_{n},
\end{split}
\end{equation}
where $t_{0}\geq0.$
\end{Proposition}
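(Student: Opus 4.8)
The plan is to realize $F_{v,t_0,\varepsilon}$ as the solution of a $\bar\partial$-equation obtained from the optimal $L^2$-extension theorem (Theorem~\ref{t:guan-zhou12}) applied on the strongly pseudoconvex domain $D_v$, exactly in the spirit of the polar-function arguments of \cite{ohsawa5,guan-zhou13a}. First I would fix a small regularization parameter $s>0$, replace $\Psi$ by $\max\{\Psi,-s^{-1}\}$ (or use a decreasing sequence of smooth strictly plurisubharmonic functions decreasing to $\varphi+\Psi$, resp. to $\varphi+2\Psi$) so that all weights involved are smooth up to the boundary and the relevant hypotheses of Lemma~\ref{l:lem3} and Lemma~\ref{l:lem7} are in force; at the very end one passes to the limit in $s$ using monotone convergence and a weak-compactness (diagonal) argument on the solutions, just as in the cited references. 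On $D_v$ set $w$ to be (a defining function manufactured from) $\Psi$, so that $H=\{\Psi=-\infty\}$ plays no role directly; instead the mechanism I actually intend to use is the ``twisted'' $\bar\partial$ technique: introduce the smooth cutoff $v'_{t_0,\varepsilon}(\Psi)$ and seek $F_{v,t_0,\varepsilon}=(1-v'_{t_0,\varepsilon}(\Psi))F^2-u$, where $u$ solves $\bar\partial u=\bar\partial\bigl((1-v'_{t_0,\varepsilon}(\Psi))F^2\bigr)=-v''_{t_0,\varepsilon}(\Psi)F^2\,\bar\partial\Psi$ with an $L^2$ estimate against the weight $e^{-\varphi}$.

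The key computation is the curvature/Hörmander estimate. Applying Lemma~\ref{l:lem3} with the weight $\Phi=\varphi$ (made smooth) and the auxiliary function $\eta$ chosen as a function of $\Psi$ of the form $\eta=1+v'_{t_0,\varepsilon}(\Psi)$ or similar — precisely the choice that makes the error term $2\mathrm{Re}(\bar\partial^*_\Phi\alpha,\alpha\llcorner(\bar\partial\eta)^\sharp)$ absorbable — together with the plurisubharmonicity of $\varphi$, of $\varphi+\Psi$ and of $\varphi+2\Psi$ (noted just before Remark~\ref{r:20131210}), I expect to obtain, for every $\bar\partial$-closed $(n,1)$-form $\alpha$ in the appropriate domain,
\begin{equation*}
\Bigl|\bigl(v''_{t_0,\varepsilon}(\Psi)F^2\,\bar\partial\Psi,\alpha\bigr)_{D_v,\varphi}\Bigr|^2
\leq 2\int_{D_v}\frac{1}{v''_{t_0,\varepsilon}(\Psi)}\,\mathbb{I}_{\{v''_{t_0,\varepsilon}(\Psi)\neq0\}}|F^2|^2 e^{-\varphi-\Psi}\,d\lambda_n
\cdot\int_{D_v}|\bar\partial^*_\varphi\alpha|^2\,\frac{e^{-\varphi}}{\mu}\,d\lambda_n,
\end{equation*}
where $\mu$ is built from $1+v'_{t_0,\varepsilon}(\Psi)$; the point is that on the support of $v''_{t_0,\varepsilon}(\Psi)$ one has $e^{-\Psi}\leq e^{t_0+B_0}$ times a harmless factor, and the bound $0\leq v''_{t_0,\varepsilon}\leq 2$ keeps the constant under control, yielding exactly the factor $\tfrac{1}{B_0}v''_{t_0,\varepsilon}(\Psi)$ and the constant $2$ on the right of \eqref{equ:JM3.4}. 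Then Lemma~\ref{l:lem7} produces a solution $u$ of $\bar\partial u=-v''_{t_0,\varepsilon}(\Psi)F^2\,\bar\partial\Psi$ with $\int_{D_v}|u|^2 e^{-\varphi}\,d\lambda_n$ bounded by the right-hand side of \eqref{equ:JM3.4}, and $F_{v,t_0,\varepsilon}:=(1-v'_{t_0,\varepsilon}(\Psi))F^2-u$ is holomorphic and satisfies the required $L^2$ bound.

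It remains to check the membership \eqref{equ:20131210a}, i.e. $(F_{v,t_0,\varepsilon}-F^2,o)\in\mathcal{I}(\varphi+\Psi)_o$. Since $1-v'_{t_0,\varepsilon}(\Psi)$ vanishes identically for $\Psi\geq -t_0-\varepsilon$ — in particular on a neighborhood of $o$, because $\Psi$ is bounded below away from $o$ on a small ball once we have regularized, or more honestly because near $o$ we may arrange $v'_{t_0,\varepsilon}(\Psi)\equiv 1$ after choosing $t_0$ large relative to the chosen neighborhood — the difference $F_{v,t_0,\varepsilon}-F^2$ equals $-v'_{t_0,\varepsilon}(\Psi)F^2-u$ locally, and $\int_{D_v}|u|^2e^{-\varphi-\Psi}\,d\lambda_n<\infty$ would follow from a second application of the estimate with $e^{-\varphi-\Psi}$ in place of $e^{-\varphi}$, using that $v''_{t_0,\varepsilon}(\Psi)e^{-2\Psi}$ is again controlled on the relevant set and that $\varphi+2\Psi$ is plurisubharmonic. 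The main obstacle I anticipate is precisely this bookkeeping of which convexity of weight ($\varphi$, $\varphi+\Psi$, or $\varphi+2\Psi$) is used where, and ensuring the regularization/limit procedure in $s$ is compatible simultaneously with the uniform $L^2$ bound \eqref{equ:JM3.4} and with the local integrability statement \eqref{equ:20131210a}; handling the non-smoothness of $\Psi$ and of $\varphi$ at $o$ cleanly — so that Lemmas~\ref{l:lem3} and \ref{l:lem7}, which require smoothness up to the boundary, genuinely apply after exhausting $D_v$ and regularizing the weights — is the technical heart of the argument.
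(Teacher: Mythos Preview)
Your overall architecture is right --- solve $\bar\partial u = -v''_{t_0,\varepsilon}(\Psi)F^2\,\bar\partial\Psi$ via the twisted Bochner--Kodaira/Lemma~\ref{l:lem3}--\ref{l:lem7} machinery, set $F_{v,t_0,\varepsilon}=(1-v'_{t_0,\varepsilon}(\Psi))F^2-u$, and pass to limits after smoothing --- and this is exactly what the paper does. But two of your concrete choices do not work as stated, and they are the heart of the matter.

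First, the weight $\Phi=\varphi$ is wrong. The function $\Psi$ is \emph{not} plurisubharmonic, so the term $(s'-su')\,v'_{t_0,\varepsilon}(\Psi)\,\partial\bar\partial\Psi$ coming out of $-\partial\bar\partial\eta+\eta\,\partial\bar\partial\phi$ has no sign by itself. The paper takes $\Phi=\varphi_m+\Psi_m+\phi$ (with $\phi=u(-v_{t_0,\varepsilon}\circ\Psi_m)$), so that Lemma~\ref{l:lem3} produces the extra curvature term $\eta\,\partial\bar\partial(\varphi_m+\Psi_m)$; since $\eta=s\geq 1$, this dominates $\partial\bar\partial(\varphi_m+\Psi_m)$, and then the convex combination
\[
\partial\bar\partial(\varphi_m+\Psi_m)+v'_{t_0,\varepsilon}(\Psi_m)\,\partial\bar\partial\Psi_m
=(1-v')\,\partial\bar\partial(\varphi_m+\Psi_m)+v'\,\partial\bar\partial(\varphi_m+2\Psi_m)\geq 0
\]
uses precisely the plurisubharmonicity of $\varphi+\Psi$ and $\varphi+2\Psi$. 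With $\Phi=\varphi$ alone you cannot absorb the bad $\Psi$-curvature.

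Second, the choice $\eta=1+v'_{t_0,\varepsilon}(\Psi)$ ``or similar'' is too vague to produce the constant $2$. The paper takes $\eta=s(-v_{t_0,\varepsilon}\circ\Psi_m)$, $\phi=u(-v_{t_0,\varepsilon}\circ\Psi_m)$, where $(s,u)$ solve the ODE system
\[
\Bigl(s+\frac{s'^{2}}{u''s-s''}\Bigr)e^{u-t}=1,\qquad s'-su'=1,
\]
with solution $u(t)=-\log(2-e^{-t})$, $s(t)=\dfrac{2t+e^{-t}}{2-e^{-t}}$. The constant in \eqref{equ:JM3.4} is then $\sup_{t\geq 0}e^{-u(t)}=\sup_{t\geq 0}(2-e^{-t})=2$. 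This ODE is not optional bookkeeping; it is what makes the twisted estimate close up with a finite constant independent of $t_0$.

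Finally, your argument for \eqref{equ:20131210a} has the sign backwards: near $o$ one has $\Psi\to-\infty$, so $v'_{t_0,\varepsilon}(\Psi)=0$ there, not $1$; hence $F_{v,t_0,\varepsilon}-F^2$ agrees with $-u-v'_{t_0,\varepsilon}(\Psi)F^2$. No ``second application'' is needed: because $\Psi_m$ already sits in the weight $\Phi$, the raw estimate $\int |u|^2(\eta+g^{-1})^{-1}e^{-\Phi}<\infty$ together with the uniform positive lower bounds on $(\eta+g^{-1})^{-1}$ and $e^{-\phi}$ gives $\int_{K_0}|u|^2 e^{-\varphi_m-\Psi_m}$ bounded, and a standard monotone-limit argument in $m$ yields $(F_{v,t_0,\varepsilon}-F^2,o)\in\mathcal{I}(\varphi+\Psi)_o$.
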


\section{Proofs of the propositions}

\subsection{Proof of Proposition \ref{p:GZ_JM}}
$\\$

For the sake of completeness, let's recall some step in our proof in
\cite{guan-zhou13p} (see also \cite{guan-zhou13ap}) with some
modifications in order to prove Proposition \ref{p:GZ_JM}.

Let $\{v_{t_0,\varepsilon}\}_{t_{0}\in\mathbb{R},\varepsilon\in(0,\frac{1}{8})}$ be a family of smooth increasing convex functions on $\mathbb{R}$,
which are continuous functions on $\mathbb{R}\cup\{-\infty\}$, such that:

 $1).$ $v_{t_{0},\varepsilon}(t)=t$ for $t\geq-t_{0}-\varepsilon$, $v_{t_{0},\varepsilon}(t)=constant$ for $t<-t_{0}-1+\varepsilon$;

 $2).$ $v''_{t_0,\varepsilon}(t)$ are pointwise convergent to $\mathbb{I}_{(-t_{0}-1,-t_{0})}$, when $\varepsilon\to 0$, and $0\leq v''_{t_0,\varepsilon}(t)\leq 2$ for any $t\in \mathbb{R}$;

 $3).$ $v'_{t_0,\varepsilon}(t)$ are pointwise convergent to $b_{t_{0}}(t)=\int_{-\infty}^{t}\mathbb{I}_{(-t_{0}-1,-t_{0})}ds$ ($b_{t_{0}}$ is also a continuous function on $\mathbb{R}\cup\{-\infty\}$), when $\varepsilon\to 0$, and $0\leq v'_{t_0,\varepsilon}(t)\leq1$ for any $t\in \mathbb{R}$.

One can construct the family $\{v_{t_0,\varepsilon}\}_{t_{0}\in\mathbb{R},\varepsilon\in(0,\frac{1}{8})}$ as equality \ref{equ:20140101} by taking $B_{0}=1$.
Then it follows that
$$v''_{t_0,\varepsilon}(t)=\frac{1}{1-4\varepsilon}\mathbb{I}_{(-t_{0}-1+2\varepsilon,-t_{0}-2\varepsilon)}*\rho_{\frac{1}{4}\varepsilon}(t),$$
and
$$v'_{t_0,\varepsilon}(t)=\int_{-\infty}^{t}(\frac{1}{1-4\varepsilon}\mathbb{I}_{(-t_{0}-1+2\varepsilon,-t_{0}-2\varepsilon)}
*\rho_{\frac{1}{4}\varepsilon})(s)ds.$$

As $D_{v}\subset\subset\Delta^{n}\subset \mathbb{C}^{n}$,
then there exist negative smooth plurisubharmonic functions $\{\varphi_{m}\}_{m=1,2,\cdots}$ on a neighborhood of $\overline{D}_{v}$,
such that the sequence $\{\varphi_{m}\}_{m=1,2,\cdots}$ is decreasingly convergent to $\varphi$
on a smaller neighborhood of $\overline{D}_{v}$,
when $m\to+\infty$.

Let $\eta=s(-v_{t_{0},\varepsilon}\circ\varphi_{m})$ and $\phi=u(-v_{t_{0},\varepsilon}\circ\varphi_{m})$,
where $s\in C^{\infty}((0,+\infty))$ satisfies $s\geq0$, and
$u\in C^{\infty}((0,+\infty))$, satisfies $\lim_{t\to+\infty}u(t)=0$, such that $u''s-s''>0$, and $s'-u's=1$.

Let $\Phi=\varphi_{m}+\phi$.

Now let $\alpha=\sum^{n}_{j=1}\alpha_{j}d\bar z^{j}\in Dom_{D_v}
(\bar\partial^*)\cap Ker(\bar\partial)\cap C^\infty_{(0,1)}(\overline {D_v})$.
By Cauchy-Schwarz inequality, it follows that
\begin{equation}
\label{equ:20131130a}
\begin{split}
2\mathrm{Re}(\bar\partial^*_\Phi\alpha,\alpha\llcorner(\bar\partial\eta)^\sharp )_{\Omega,\Phi}
\geq
&-\int_{D_v}g^{-1}|\bar{\partial}^{*}_{\Phi}\alpha|^{2}e^{-\Phi}d\lambda_{n}
\\&+
\sum_{j,k=1}^{n}\int_{D_v}
(-g(\partial_{j} \eta)\bar\partial_{k} \eta )\alpha_{\bar{j} }\overline{{\alpha}_{\bar {k}}}e^{-\Phi}d\lambda_{n}.
\end{split}
\end{equation}

Using Lemma \ref{l:lem3} and inequality \ref{equ:20131130a},
since $s\geq0$ and $\varphi_{m}$ is a plurisubharmonic function on $\overline{D}_{v}$,
we get

\begin{equation}
\label{equ:4.1}
\begin{split}
&\int_{D_v}(\eta+g^{-1})|\bar{\partial}^{*}_{\Phi}\alpha|^{2}e^{-\Phi}d\lambda_{n}
\\\geq&\sum_{j,k=1}^{n}\int_{D_v}
(-\partial_{j}\bar{\partial}_{k}\eta+\eta\partial_{j}\bar{\partial}_{k}\Phi-g(\partial_{j} \eta)\bar\partial_{k} \eta )\alpha_{\bar{j} }\overline{{\alpha}_{\bar{k}}}e^{-\Phi}d\lambda_{n}
\\\geq&\sum_{j,k=1}^{n}\int_{D_v}
(-\partial_{j}\bar{\partial}_{k}\eta+\eta\partial_{j}\bar{\partial}_{k}\phi-g(\partial_{j} \eta)\bar\partial_{k} \eta )\alpha_{\bar{j} }\overline{{\alpha}_{\bar {k}}}e^{-\Phi}d\lambda_{n},
\end{split}
\end{equation}
where $g$ is a positive continuous function on $D_{v}$.
We need some calculations to determine $g$.

We have

\begin{equation}
\label{}
\begin{split}
&\partial_{j}\bar{\partial}_{k}\eta=-s'(-v_{t_0,\varepsilon}\circ \varphi_{m})\partial_{j}\bar{\partial}_{k}(v_{t_0,\varepsilon}\circ \varphi_{m})
\\+&s''(-v_{t_0,\varepsilon}\circ \varphi_{m})\partial_{j}(v_{t_0,\varepsilon}\circ \varphi_{m})
\bar{\partial}_{k}(v_{t_0,\varepsilon}\circ\varphi_{m}),
\end{split}
\end{equation}

and
\begin{equation}
\label{}
\begin{split}
&\partial_{j}\bar{\partial}_{k}\phi=-u'(-v_{t_0,\varepsilon}\circ \varphi_{m})\partial_{j}\bar{\partial}_{k}v_{t_0,\varepsilon}\circ \varphi_{m}
\\+&
u''(-v_{t_0,\varepsilon}\circ \varphi_{m})\partial_{j}(v_{t_0,\varepsilon}\circ \varphi_{m})\bar{\partial}_{k}(v_{t_0,\varepsilon}\circ \varphi_{m})
\end{split}
\end{equation}

for any $j,k$ ($1\leq j,k\leq n$).

We have

\begin{equation}
\label{}
\begin{split}
&\sum_{1\leq j,k\leq n}(-\partial_{j}\bar{\partial}_{k}\eta+\eta\partial_{j}\bar{\partial}_{k}\phi-g(\partial_{j} \eta)
\bar\partial_{k} \eta )\alpha_{\bar{j} }\overline{{\alpha}_{\bar{k}}}
\\=&(s'-su')\sum_{1\leq j,k\leq n}\partial_{j}\bar{\partial}_{k}(v_{t_0,\varepsilon}\circ \varphi_{m})\alpha_{\bar{j} }\overline{{\alpha}_{\bar{k}}}
\\+&((u''s-s'')-gs'^{2})\sum_{1\leq j,k\leq n}\partial_{j}
(-v_{t_0,\varepsilon}\circ \varphi_{m})\bar{\partial}_{k}(-v_{t_0,\varepsilon}\circ\varphi_{m})\alpha_{\bar{j} }\overline{{\alpha}_{\bar{k}}}
\\=&(s'-su')\sum_{1\leq j,k\leq n}((v'_{t_0,\varepsilon}\circ\varphi_{m})\partial_{j}\bar{\partial}_{k}\varphi_{m}+(v''_{t_0,\varepsilon}\circ \varphi_{m})\partial_{j}(\varphi_{m})\bar{\partial}_{k}(\varphi_{m}))\alpha_{\bar{j} }\overline{{\alpha}_{\bar{k}}}
\\+&((u''s-s'')-gs'^{2})\sum_{1\leq j,k\leq n}\partial_{j}
(-v_{t_0,\varepsilon}\circ \varphi_{m})\bar{\partial}_{k}(-v_{t_0,\varepsilon}\circ \varphi_{m})\alpha_{\bar{j} }\overline{{\alpha}_{\bar{k}}}.
\end{split}
\end{equation}
We omit composite item $(-v_{t_0,\varepsilon}\circ \varphi_{m})$ after $s'-su'$ and $(u''s-s'')-gs'^{2}$ in the above equalities.

Let $g=\frac{u''s-s''}{s'^{2}}\circ(-v_{t_0,\varepsilon}\circ \varphi_{m})$.
It follows that $\eta+g^{-1}=(s+\frac{s'^{2}}{u''s-s''})\circ(-v_{t_0,\varepsilon}\circ \varphi_{m})$.

Because of $v'_{t_0,\varepsilon}\geq 0$  and $s'-su'=1$, using inequalities \ref{equ:4.1}, we have
\begin{equation}
\label{equ:3.1}
\begin{split}
\int_{D_v}(\eta+g^{-1})|\bar{\partial}^{*}_{\Phi}\alpha|^{2}e^{-\Phi}d\lambda_{n}
\geq\int_{D_v}(v''_{t_0,\varepsilon}\circ{\varphi_{m}})
\big|\alpha\llcorner(\bar \partial \varphi_{m})^\sharp\big|^2e^{-\Phi}d\lambda_{n}.
\end{split}
\end{equation}

Let $\lambda=\bar{\partial}[(1-v'_{t_0,\varepsilon}(\varphi_{m})){F}]$.
By the definition of contraction, Cauchy-Schwarz inequality and inequality \ref{equ:3.1},
it follows that
\begin{equation}
\begin{split}
&|(\lambda,\alpha)_{D_v,\Phi}|^{2}=|((v''_{t_0,\varepsilon}\circ{\varphi_{m}})\bar\partial\varphi_{m} F,\alpha)_{D_v,\Phi}|^{2}
\\=&|((v''_{t_0,\varepsilon}\circ{\varphi_{m}})F,\alpha\llcorner(\bar\partial\varphi_{m})^\sharp\big)_{D_v,\Phi}|^{2}
\\\leq&\int_{D_v}
(v''_{t_0,\varepsilon}\circ{\varphi_{m}})| F|^2e^{-\Phi}d\lambda_{n}\int_{D_v}(v''_{t_0,\varepsilon}\circ{\varphi_{m}})
\big|\alpha\llcorner(\bar\partial\varphi_{m})^\sharp\big|^2e^{-\Phi}d\lambda_{n}
\\\leq&
(\int_{D_v}
(v''_{t_0,\varepsilon}\circ{\varphi_{m}})| F|^2e^{-\Phi}d\lambda_{n})
(\int_{ D_v}(\eta+g^{-1})|\bar{\partial}^{*}_{\Phi}\alpha|^{2}e^{-\Phi}d\lambda_{n}).
\end{split}
\end{equation}

Let $\mu:=(\eta+g^{-1})^{-1}$. Using Lemma \ref{l:lem7},
we have locally $L^{1}$ function $u_{v,t_0,m,\varepsilon}$ on $D_{v}$ such that $\bar{\partial}u_{v,t_0,m,\varepsilon}=\lambda$,
and
\begin{equation}
 \label{equ:3.2}
 \begin{split}
 &\int_{ D_v}|u_{v,t_0,m,\varepsilon}|^{2}(\eta+g^{-1})^{-1} e^{-\Phi}d\lambda_{n}
  \leq\int_{D_v}(v''_{t_0,\varepsilon}\circ{\varphi_{m}})| F|^2e^{-\Phi}d\lambda_{n}.
  \end{split}
\end{equation}

Let $\mu_{1}=e^{v_{t_0,\varepsilon}\circ\varphi_{m}}$, $\tilde{\mu}=\mu_{1}e^{\phi}$.
Assume that we can choose $\eta$ and $\phi$ such that $\tilde{\mu}\leq \mathbf{C}(\eta+g^{-1})^{-1}=\mu$, where $\mathbf{C}=1$.

Note that $v_{t_0,\varepsilon}(\varphi_{m})\geq\varphi_{m}$.
Then it follows that
\begin{equation}
\label{equ:3.8}
\begin{split}
\int_{ D_v}|u_{v,t_0,m,\varepsilon}|^{2} d\lambda_{n}\leq\int_{ D_v}|u_{v,t_0,m,\varepsilon}|^{2}\mu_{1}e^{\phi} e^{-\varphi_{m}-\phi}d\lambda_{n}
=\int_{ D_v}|u_{v,t_0,m,\varepsilon}|^{2}\tilde{\mu}e^{-\Phi}d\lambda_{n}.
\end{split}
\end{equation}

Using inequalities \ref{equ:3.2} and \ref{equ:3.8},
we obtain that
$$\int_{ D_v}|u_{v,t_0,m,\varepsilon}|^{2} d\lambda_{n}\leq\mathbf{C}\int_{D_v}
(v''_{t_0,\varepsilon}\varphi_{m})| F|^2e^{-\Phi}d\lambda_{n},$$
under the assumption $\tilde{\mu}\leq\mathbf{C} (\eta+g^{-1})^{-1}$.

As $-v_{t_0,\varepsilon}\circ{\varphi_{m}}(\overline{D_{v}})\subset\subset(0,t_{0}+1)$
and $\{\varphi_{m}\}_{m=1,2,\cdots}$ is decreasing,
then it is clear that
\begin{equation}
\label{equ:20131130b}
-v_{t_0,\varepsilon}\circ{\varphi_{m}}(\overline{D_{v}})\subset\subset K_{t_{0}}\subset\subset(0,t_{0}+1)
\end{equation}
where $K_{t_{0}}$ is independent of $m$ and $\varepsilon\in (0,\frac{1}{8})$.
As $u$ is positive and smooth on $(0,+\infty)$,
it follows that $\phi$ is uniformly bounded on $\overline{D}_{v}$ independent of $m$.

As $Supp(v''_{t_{0},\varepsilon})\subset\subset(-t_{0}-1,-t_{0})$,
then it is clear that $(v''_{t_0,\varepsilon}\circ{\varphi_{m}})| F|^2e^{-\varphi_{m}}$ are uniformly bounded on $\overline{D}_{v}$ independent of $m$.
Therefore $\int_{D_v}
(v''_{t_0,\varepsilon}\circ{\varphi_{m}})| F|^2e^{-\Phi}d\lambda_{n}$ are uniformly bounded independent of $m$,
for any given $v$, $t_0$, $\varepsilon$.

By weakly compactness of the unit ball of $L^{2}(D_{v})$ and dominated convergence theorem, when $m\to+\infty$,
it follows that the weak limit of some weakly convergent subsequence of
$\{u_{v,t_{0},m,\varepsilon}\}_{m}$ gives an $(n,0)$-form $u_{v,t_0,\varepsilon}$ on $D_v$ satisfying
\begin{equation}
\label{equ:3.3}\int_{ D_v}|u_{v,t_0,\varepsilon}|^{2}d\lambda_{n}\leq\frac{\mathbf{C}}{e^{A_{t_0}}}\int_{D_v}
(v''_{t_0,\varepsilon}\circ{\varphi})| F|^2e^{-\varphi}d\lambda_{n},
\end{equation}
where $A_{t_0}:=\inf_{t\geq t_0}\{u(t)\}$.

As $\varphi_{m}$ is decreasingly convergent to $\varphi$ on $\Delta^{n}$,
and $\varphi(o)=-\infty$,
then for any given $t_{0}$ there exists $m_{0}$ and a neighbourhood $U_{0}$ of $o\in D_{v}$ on $\Delta^{n}$,
such that for any $m\geq m_{0}$ and $\varepsilon<1$,
$v''_{t_0,\varepsilon}\circ\varphi_{m}|_{U_{0}}=0$.
It follows that
$$\bar\partial u_{v,t_0,m,\varepsilon}|_{U_0}=\lambda|_{U_0}=\bar{\partial}[(1-v'_{t_0,\varepsilon}(\varphi_{m})){F}]|_{U_0}=-(v''_{t_0,\varepsilon}\circ\varphi_{m})F\bar{\partial}\varphi_{m}|_{U_{0}}=0.$$
That is to say $u_{v,t_0,m,\varepsilon}|_{U_0}$ are all holomorphic.
Therefore $u_{v,t_0,\varepsilon}|_{U_0}$ is holomorphic.

Recall that the integrals $\int_{ D_v}|u_{v,t_{0},m,\varepsilon}|^{2} d\lambda_{n}$ have a uniform bound independent of $m$,
then we can choose a subsequence with respect to $m$ from the chosen weakly convergent subsequence of $u_{v,t_{0},m,\varepsilon}$,
such that the subsequence is uniformly convergent on any compact subset of $U_0$,
and we still denote the subsequence by $u_{v,t_{0},m,\varepsilon}$ without ambiguity.

By above arguments,
it follows that the right hand side of inequality \ref{equ:3.2} are uniformly bounded independent of $m$ and $\varepsilon\in(0,\frac{1}{8})$.
By inequality \ref{equ:3.2},
it follows that $\int_{ D_v}|u_{v,t_0,m,\varepsilon}|^{2}(\eta+g^{-1})^{-1} e^{-\phi-\varphi_{m}}d\lambda_{n}$
are uniformly bounded independent of $m$ and $\varepsilon\in(0,\frac{1}{8})$.

Using inequality \ref{equ:20131130b},
we obtain that
$$(\eta+g^{-1})^{-1}=(s(-v_{t_{0},\varepsilon}\circ\varphi_{m})+\frac{s'^{2}}{u''s-s''}\circ(-v_{t_0,\varepsilon}\circ\varphi_{m}))^{-1}$$
and
$e^{-\phi}=e^{-u(-v_{t_{0},\varepsilon}\circ\varphi_{m})}$ have positive uniform lower bounds independent of $m$ and $\varepsilon\in(0,\frac{1}{8})$.
Then the integrals
$$\int_{ K_0}|u_{v,t_0,m,\varepsilon}|^{2}e^{-\varphi_{m}}d\lambda_{n}$$
have a uniform upper bound independent of $m$ and $\varepsilon\in(0,\frac{1}{8})$
for any given compact set $K_{0}\subset\subset U_0\cap D_v$,
and
$$\bar\partial u_{v,t_0,m,\varepsilon}|_{U_0}=0.$$

As
$\varphi_{m'}\leq\varphi_{m}$ where $m'\geq m$,
it follows that
$$|u_{v,t_0,m',\varepsilon}|^{2}e^{-\varphi_{m}}\leq|u_{v,t_0,m',\varepsilon}|^{2}e^{-\varphi_{m'}}.$$
Then for any given compact set $K_{0}\subset\subset U_0\cap D_v$,
$\int_{ K_0}|u_{v,t_0,m',\varepsilon}|^{2}e^{-\varphi_{m}}d\lambda_{n}$
have a uniform bound independent of $m$ and $m'$.
It is clear that
for any given compact set $K_{0}\subset\subset U_0\cap D_v$,
the integrals $\int_{ K_0}|u_{v,t_0,\varepsilon}|^{2}e^{-\varphi_{m}}d\lambda_{n}$
have a uniform bound independent of $m$ and $\varepsilon\in(0,\frac{1}{8})$.
Therefore the integrals $\int_{ K_0}|u_{v,t_0,\varepsilon}|^{2}e^{-\varphi}d\lambda_{n}$
have a uniform bound independent of $\varepsilon\in(0,\frac{1}{8})$,
for any given compact set $K_{0}\subset\subset U_0\cap D_v$ containing $o$.

In summary,
we have $|u_{v,t_0,\varepsilon}|^{2}e^{-\varphi}$ is integrable near $o$,
and $\bar\partial u_{v,t_0,\varepsilon}=0$ near $o$.
That is to say
$$(u_{v,t_0,\varepsilon},o)\in\mathcal{I}(\varphi)_{o}.$$

Let $F_{v,t_0,\varepsilon}:=(1-v'_{t_0,\varepsilon}\circ\varphi)F-u_{v,t_0,\varepsilon}$.
By inequality \ref{equ:3.3} and $(u_{v,t_0,\varepsilon},o)\in\mathcal{I}(\varphi)_{o}$,
it follows that
$F_{v,t_0,\varepsilon}$ is a holomorphic function on $D_{v}$ satisfying $(F_{v,t_0,\varepsilon}-F,o)\in\mathcal{I}(\varphi)_{o}$ and
\begin{equation}
\label{equ:3.5}
\begin{split}
&\int_{ D_v}|F_{v,t_0,\varepsilon}-(1-v'_{t_0,\varepsilon}\circ\varphi)F|^{2}d\lambda_{n}
\\\leq&\frac{\mathbf{C}}{e^{A_{t_0}}}\int_{D_v}(v''_{t_0,\varepsilon}\circ\varphi)|F|^{2}e^{-\varphi}d\lambda_{n}.
\end{split}
\end{equation}

Given $t_0$ and $D_{v}$,
it is clear that $(v''_{t_0,\varepsilon}\circ\varphi)|F|^{2}e^{-\varphi}$ have a uniform bound on $D_{v}$ independent of $\varepsilon$.
Then the integrals
$\int_{D_v}(v''_{t_0,\varepsilon}\circ\varphi)|F|^{2}e^{-\varphi}d\lambda_{n}$ have a uniform bound independent of $\varepsilon$,
for any given $t_0$ and $D_v$.
As $|(1-v'_{t_0,\varepsilon}\circ\varphi)F|^{2}$ have a uniform bound on $D_{v}$ independent of $\varepsilon$,
it follows that
the integrals $\int_{D_v}|(1-v'_{t_0,\varepsilon}\circ\varphi)F|^{2}d\lambda_{n}$ have a uniform bound independent of $\varepsilon$,
for any given $t_0$ and $D_v$.

As
\begin{equation}
\label{}
\begin{split}
&\int_{ D_v}|F_{v,t_0,\varepsilon}|^{2}d\lambda_{n}
\\&\leq
\int_{ D_v}|F_{v,t_0,\varepsilon}-(1-v'_{t_0,\varepsilon}\circ\varphi)F|^{2}d\lambda_{n}
+\int_{ D_v}|(1-v'_{t_0,\varepsilon}\circ\varphi)F|^{2}d\lambda_{n}
\\&\leq\frac{\mathbf{C}}{e^{A_{t_0}}}\int_{D_v}(v''_{t_0,\varepsilon}\circ\varphi)|F|^{2}e^{-\varphi}d\lambda_{n}
+\int_{ D_v}|(1-v'_{t_0,\varepsilon}\circ\varphi)F|^{2}d\lambda_{n},
\end{split}
\end{equation}
then $\int_{ D_v}|F_{v,t_0,\varepsilon}|^{2}d\lambda_{n}$ have a uniform bound independent of $\varepsilon$.

As $\bar\partial F_{v,t_{0},\varepsilon}=0$ when $\varepsilon\to 0$ and the unit ball of $L^{2}(D_{v})$ is weakly compact,
it follows that the weak limit of some weakly convergent subsequence of
$\{F_{v,t_0,\varepsilon}\}_{\varepsilon}$ gives us a holomorphic $(n,0)$-form $F_{v,t_0}$ on $\Delta^{n}$.
Then we can also choose a subsequence of the weakly convergent subsequence of $\{F_{v,t_0,\varepsilon}\}_{\varepsilon}$,
such that the chosen sequence is uniformly convergent on any compact subset of $D_v$, denoted by $\{F_{v,t_0,\varepsilon}\}_{\varepsilon}$ without ambiguity.

For any given compact subset $K_{0}$ on $D_v$,
$F_{v,t_0,\varepsilon}$, $(1-v'_{t_0,\varepsilon}\circ\varphi)F$ and
$(v''_{t_0,\varepsilon}\circ\varphi)|F|^{2}e^{-\varphi}$ have
uniform bounds on $K_{0}$ independent of $\varepsilon$.

As the integrals $\int_{ K_0}|u_{v,t_0,\varepsilon}|^{2}e^{-\varphi}d\lambda_{n}$
have a uniform bound independent of $\varepsilon\in(0,\frac{1}{8})$,
for any given compact set $K_{0}\subset\subset U_0\cap D_v$ containing $o$,
it follows that
$$(F_{v,t_0}-(1-b_{t_0}(\varphi))F,o)\in \mathcal{I}(\varphi)_{o}.$$

Using the dominated convergence theorem on any compact subset $K$ of
$D_v$ and inequality \ref{equ:3.5}, we obtain
\begin{equation}
\begin{split}
&\int_{K}|F_{v,t_0}-(1-b_{t_0}(\varphi))F|^{2}d\lambda_{n}
\\\leq&\frac{\mathbf{C}}{e^{A_{t_0}}}\int_{D_v}(\mathbb{I}_{\{-t_{0}-1< t<-t_{0}\}}\circ\varphi)|F|^{2}e^{-\varphi}d\lambda_{n}.
\end{split}
\end{equation}

It suffices to find $\eta$ and $\phi$ such that
$(\eta+g^{-1})\leq \mathbf{C}e^{-\varphi_{m}}e^{-\phi}=\mathbf{C}\tilde{\mu}^{-1}$ on $D_v$.
As $\eta=s(-v_{t_0,\varepsilon}\circ\varphi_{m})$ and $\phi=u(-v_{t_0,\varepsilon}\circ\varphi_{m})$,
we have $(\eta+g^{-1}) e^{v_{t_0,\varepsilon}\circ\varphi_{m}}e^{\phi}=(s+\frac{s'^{2}}{u''s-s''})e^{-t}e^{u}\circ(-v_{t_0,\varepsilon}\circ\varphi_{m})$.

Summarizing the above discussion about $s$ and $u$, we are naturally led to a
system of ODEs:
\begin{equation}
\label{GZ}
\begin{split}
&1).\,\,(s+\frac{s'^{2}}{u''s-s''})e^{u-t}=\mathbf{C}, \\
&2).\,\,s'-su'=1,
\end{split}
\end{equation}
where $t\in[0,+\infty)$, and $\mathbf{C}=1$.

It is not hard to solve the ODE system \ref{GZ} (details see the following Remark) and get $u=-\log(1-e^{-t})$ and
$s=\frac{t}{1-e^{-t}}-1$.
It follows that $s\in C^{\infty}((0,+\infty))$ satisfies $s\geq0$, $\lim_{t\to+\infty}u(t)=0$ and
$u\in C^{\infty}((0,+\infty))$ satisfies $u''s-s''>0$.

As $u=-\log(1-e^{-t})$ is decreasing with respect to $t$,
then
$$\frac{\mathbf{C}}{e^{A_{t_0}}}=\frac{1}{\exp\inf_{t\geq t_{0}}u(t)}=\sup_{t\geq t_{0}}\frac{1}{e^{u(t)}}=\sup_{t\geq t_{0}}(1-e^{-t})=1,$$
therefore we are done.
Now we obtain Proposition \ref{p:GZ_JM}.

\begin{Remark}
In fact, we can solve the equation \ref{GZ} as follows:

By $2)$ of equation \ref{GZ}, we have  $su''-s''=-s'u'$. Then $1)$ of equation \ref{GZ} can be change into
$$(s-\frac{s'}{u'})e^{u-t}=\mathbf{C},$$ which is
$$\frac{su'-s'}{u'}e^{u-t}=\mathbf{C}.$$
By $2)$ of equation \ref{GZ}, we have
$$\mathbf{C}=\frac{su'-s'}{u'}e^{u-t}=\frac{-1}{u'}e^{u-t},$$
which is
$$\frac{de^{-u}}{dt}=-u'e^{-u}=\frac{e^{-t}}{\mathbf{C}}.$$ Note that $2)$ of equation \ref{GZ} is equivalent to
$\frac{d(se^{-u})}{dt}=e^{-u}$.
As $s\geq 0$ and $\lim_{t\to+\infty}u=0$,
it follows that the solution
\begin{displaymath}
     \begin{cases}
      u=-\log(1-e^{-t}), \\
      s=\frac{t+e^{-t}-1}{1-e^{-t}},\end{cases}
    \end{displaymath}
\end{Remark}

\subsection{Proof of Proposition \ref{p:GZ_JM_smooth}}
$\\$

For the sake of completeness, we recall our proof in \cite{guan-zhou13p} (see also \cite{guan-zhou13ap}) with some modifications.

As $D_{v}\subset\subset\Delta^{n}\subset \mathbb{C}^{n}$,
then there exist negative smooth plurisubharmonic functions $\{\varphi_{m}\}_{m=1,2,\cdots}$ and
smooth functions $\{\Psi_{m}\}_{m=1,2,\cdots}$
on a neighborhood of $\overline{D}_{v}$,
such that

1). $\{\varphi_{m}+\Psi_{m}\}_{m=1,2,\cdots}$ and $\{\varphi_{m}+2\Psi_{m}\}_{m=1,2,\cdots}$ are negative smooth plurisubharmonic functions;

2). the sequence $\{\varphi_{m}\}_{m=1,2,\cdots}$ is decreasingly convergent to $\varphi$;

3). the sequence $\{\varphi_{m}+\Psi_{m}\}_{m=1,2,\cdots}$ is decreasingly convergent to $\varphi+\Psi$;
$\\$on a smaller neighborhood of $\overline{D}_{v}$,
when $m\to+\infty$.

Let $\eta=s(-v_{t_{0},\varepsilon}\circ\Psi_{m})$ and $\phi=u(-v_{t_{0},\varepsilon}\circ\Psi_{m})$,
where $s\in C^{\infty}((0,+\infty))$ satisfies $s\geq1$, and
$u\in C^{\infty}((0,+\infty))$, such that $u''s-s''>0$, and $s'-u's=1$.

Let $\Phi:=\varphi_{m}+\Psi_{m}+\phi$.

Now let $\alpha=\sum^{n}_{j=1}\alpha_{j}d\bar z^{j}\in Dom_{D_v}
(\bar\partial^*)\cap Ker(\bar\partial)\cap C^\infty_{(0,1)}(\overline {D_v})$.
By Cauchy-Schwarz inequality, it follows that
\begin{equation}
\label{equ:20131130aJM}
\begin{split}
2\mathrm{Re}(\bar\partial^*_\Phi\alpha,\alpha\llcorner(\bar\partial\eta)^\sharp )_{\Omega,\Phi}
\geq
&-\int_{D_v}g^{-1}|\bar{\partial}^{*}_{\Phi}\alpha|^{2}e^{-\Phi}d\lambda_{n}
\\&+
\sum_{j,k=1}^{n}\int_{D_v}
(-g(\partial_{j} \eta)\bar\partial_{k} \eta )\alpha_{\bar{j} }\overline{{\alpha}_{\bar {k}}}e^{-\Phi}d\lambda_{n}.
\end{split}
\end{equation}

Using Lemma \ref{l:lem3} and inequality \ref{equ:20131130aJM},
since $s\geq0$ and $\varphi_{m}$ is a plurisubharmonic function on $\overline{D}_{v}$,
we get

\begin{equation}
\label{equ:4.1JM}
\begin{split}
&\int_{D_v}(\eta+g^{-1})|\bar{\partial}^{*}_{\Phi}\alpha|^{2}e^{-\Phi}d\lambda_{n}
\\\geq&\sum_{j,k=1}^{n}\int_{D_v}
(-\partial_{j}\bar{\partial}_{k}\eta+\eta\partial_{j}\bar{\partial}_{k}\Phi-g(\partial_{j} \eta)\bar\partial_{k} \eta )\alpha_{\bar{j} }\overline{{\alpha}_{\bar{k}}}e^{-\Phi}d\lambda_{n}
\\=&\sum_{j,k=1}^{n}\int_{D_v}
(-\partial_{j}\bar{\partial}_{k}\eta+\eta\partial_{j}\bar{\partial}_{k}\phi+\eta\partial_{j}\bar{\partial}_{k}(\Psi_{m}+\varphi_{m})-g(\partial_{j} \eta)\bar\partial_{k} \eta )\alpha_{\bar{j} }\overline{{\alpha}_{\bar {k}}}e^{-\Phi}d\lambda_{n},
\end{split}
\end{equation}
where $g$ is a positive continuous function on $D_{v}$.
We need some calculations to determine $g$.

We have

\begin{equation}
\label{}
\begin{split}
&\partial_{j}\bar{\partial}_{k}\eta=-s'(-v_{t_0,\varepsilon}\circ \Psi_{m})\partial_{j}\bar{\partial}_{k}(v_{t_0,\varepsilon}\circ \Psi_{m})
\\+&s''(-v_{t_0,\varepsilon}\circ\Psi_{m})\partial_{j}(v_{t_0,\varepsilon}\circ \Psi_{m})
\bar{\partial}_{k}(v_{t_0,\varepsilon}\circ\Psi_{m}),
\end{split}
\end{equation}

and
\begin{equation}
\label{}
\begin{split}
&\partial_{j}\bar{\partial}_{k}\phi=-u'(-v_{t_0,\varepsilon}\circ \Psi_{m})\partial_{j}\bar{\partial}_{k}v_{t_0,\varepsilon}\circ \Psi_{m}
\\+&
u''(-v_{t_0,\varepsilon}\circ \Psi_{m})\partial_{j}(v_{t_0,\varepsilon}\circ \Psi_{m})\bar{\partial}_{k}(v_{t_0,\varepsilon}\circ \Psi_{m})
\end{split}
\end{equation}
for any $j,k$ ($1\leq j,k\leq n$).

We have

\begin{equation}
\label{equ:20131204bJM}
\begin{split}
&\sum_{1\leq j,k\leq n}(-\partial_{j}\bar{\partial}_{k}\eta+\eta\partial_{j}\bar{\partial}_{k}\phi-g(\partial_{j} \eta)
\bar\partial_{k} \eta )\alpha_{\bar{j} }\overline{{\alpha}_{\bar{k}}}
\\=&(s'-su')\sum_{1\leq j,k\leq n}\partial_{j}\bar{\partial}_{k}(v_{t_0,\varepsilon}\circ \Psi_{m})\alpha_{\bar{j} }\overline{{\alpha}_{\bar{k}}}
\\+&((u''s-s'')-gs'^{2})\sum_{1\leq j,k\leq n}\partial_{j}
(-v_{t_0,\varepsilon}\circ \Psi_{m})\bar{\partial}_{k}(-v_{t_0,\varepsilon}\circ\Psi_{m})\alpha_{\bar{j} }\overline{{\alpha}_{\bar{k}}}
\\=&(s'-su')\sum_{1\leq j,k\leq n}((v'_{t_0,\varepsilon}\circ\Psi_{m})\partial_{j}\bar{\partial}_{k}\Psi_{m}+(v''_{t_0,\varepsilon}\circ \Psi_{m})\partial_{j}(\Psi_{m})\bar{\partial}_{k}(\Psi_{m}))\alpha_{\bar{j} }\overline{{\alpha}_{\bar{k}}}
\\+&((u''s-s'')-gs'^{2})\sum_{1\leq j,k\leq n}\partial_{j}
(-v_{t_0,\varepsilon}\circ \Psi_{m})\bar{\partial}_{k}(-v_{t_0,\varepsilon}\circ \Psi_{m})\alpha_{\bar{j} }\overline{{\alpha}_{\bar{k}}}.
\end{split}
\end{equation}
We omit composite item $(-v_{t_0,\varepsilon}\circ \Psi_{m})$ after $s'-su'$ and $(u''s-s'')-gs'^{2}$ in the above equalities.

Since $\varphi_{m}+\Psi_{m}$ and $\varphi_{m}+2\Psi_{m}$ are plurisubharmonic on $\overline{D}_{v}$ and
$0\leq v'_{t_{0},\varepsilon}\circ\Psi_{m}\leq1$,
we have
\begin{equation}
(1-v'_{t_0,\varepsilon}\circ\Psi_{m})\sqrt{-1}\partial\bar\partial(\varphi_{m}+\Psi_{m})+
(v'_{t_0,\varepsilon}\circ\Psi_{m})\sqrt{-1}\partial\bar\partial(\varphi_{m}+2\Psi_{m})\geq 0,
\end{equation}
on $\overline{D}_{v}$, which means that
\begin{equation}
\label{equ:20131204aJM}
\sqrt{-1}\partial\bar\partial(\varphi_{m}+\Psi_{m})+(v'_{t_0,\varepsilon}
\circ\Psi_{m})\sqrt{-1}\partial\bar{\partial}\Psi_{m}\geq 0,
\end{equation}
on $\overline{D}_{v}$.

Let $g=\frac{u''s-s''}{s'^{2}}\circ(-v_{t_0,\varepsilon}\circ \Psi_{m})$.
It follows that $\eta+g^{-1}=(s+\frac{s'^{2}}{u''s-s''})\circ(-v_{t_0,\varepsilon}\circ \Psi_{m})$.

Because of $v'_{t_0,\varepsilon}\geq 0$  and $s'-su'=1$, using inequalities \ref{equ:4.1JM} \ref{equ:20131204aJM} and \ref{equ:20131204bJM},
we have
\begin{equation}
\label{equ:3.1JM}
\begin{split}
\int_{D_v}(\eta+g^{-1})|\bar{\partial}^{*}_{\Phi}\alpha|^{2}e^{-\Phi}d\lambda_{n}
\geq\int_{D_v}(v''_{t_0,\varepsilon}\circ{\Psi_{m}})
\big|\alpha\llcorner(\bar \partial \Psi_{m})^\sharp\big|^2e^{-\Phi}d\lambda_{n}.
\end{split}
\end{equation}

Let $\lambda=\bar{\partial}[(1-v'_{t_0,\varepsilon}(\Psi_{m})){F^{2}}]$.
By the definition of contraction, Cauchy-Schwarz inequality and inequality \ref{equ:3.1JM},
it follows that
\begin{equation}
\begin{split}
&|(\lambda,\alpha)_{D_v,\Phi}|^{2}=|((v''_{t_0,\varepsilon}\circ{\Psi_{m}})\bar\partial\Psi_{m} F^{2},\alpha)_{D_v,\Phi}|^{2}
\\=&|((v''_{t_0,\varepsilon}\circ{\Psi_{m}})F^{2},\alpha\llcorner(\bar\partial\Psi_{m})^\sharp\big)_{D_v,\Phi}|^{2}
\\\leq&\int_{D_v}
(v''_{t_0,\varepsilon}\circ{\Psi_{m}})|F^{2}|^2e^{-\Phi}d\lambda_{n}\int_{D_v}(v''_{t_0,\varepsilon}\circ{\Psi_{m}})
\big|\alpha\llcorner(\bar\partial\Psi_{m})^\sharp\big|^2e^{-\Phi}d\lambda_{n}
\\\leq&
(\int_{D_v}
(v''_{t_0,\varepsilon}\circ{\Psi_{m}})| F^{2}|^2e^{-\Phi}d\lambda_{n})
(\int_{ D_v}(\eta+g^{-1})|\bar{\partial}^{*}_{\Phi}\alpha|^{2}e^{-\Phi}d\lambda_{n}).
\end{split}
\end{equation}

Let $\mu:=(\eta+g^{-1})^{-1}$. Using Lemma \ref{l:lem7},
we have locally $L^{1}$ function $u_{v,t_0,m,\varepsilon}$ on $D_{v}$ such that $\bar{\partial}u_{v,t_0,m,\varepsilon}=\lambda$,
and
\begin{equation}
 \label{equ:3.2JM}
 \begin{split}
 &\int_{ D_v}|u_{v,t_0,m,\varepsilon}|^{2}(\eta+g^{-1})^{-1} e^{-\Phi}d\lambda_{n}
  \leq\int_{D_v}(v''_{t_0,\varepsilon}\circ{\Psi_{m}})|F^{2}|^2e^{-\Phi}d\lambda_{n}.
  \end{split}
\end{equation}

Let $\mu_{1}=e^{v_{t_0,\varepsilon}\circ\Psi_{m}}$, $\tilde{\mu}=\mu_{1}e^{\phi}$.
Assume that we can choose $\eta$ and $\phi$ such that $\tilde{\mu}\leq \mathbf{C}(\eta+g^{-1})^{-1}=\mu$, where $\mathbf{C}=1$.

Note that $v_{t_0,\varepsilon}(\Psi_{m})\geq\Psi_{m}$.
Then it follows that
\begin{equation}
\label{equ:3.8JM}
\begin{split}
\int_{ D_v}|u_{v,t_0,m,\varepsilon}|^{2}e^{-\varphi_{m}} d\lambda_{n}
&\leq\int_{ D_v}|u_{v,t_0,m,\varepsilon}|^{2}\mu_{1}e^{\phi} e^{-\Psi_{m}-\varphi_{m}-\phi}d\lambda_{n}
\\&=\int_{ D_v}|u_{v,t_0,m,\varepsilon}|^{2}\tilde{\mu}e^{-\Phi}d\lambda_{n}.
\end{split}
\end{equation}

Using inequalities \ref{equ:3.2JM} and \ref{equ:3.8JM},
we obtain that
$$\int_{ D_v}|u_{v,t_0,m,\varepsilon}|^{2}e^{-\varphi_{m}} d\lambda_{n}\leq\mathbf{C}\int_{D_v}
(v''_{t_0,\varepsilon}\circ\Psi_{m})|F^{2}|^2e^{-\Phi}d\lambda_{n},$$
under the assumption $\tilde{\mu}\leq\mathbf{C} (\eta+g^{-1})^{-1}$.

As $-v_{t_0,\varepsilon}\circ{\Psi_{m}}(\overline{D_{v}})\subset\subset(-\infty,t_{0}+1)$,
then it is clear that
\begin{equation}
\label{equ:20131130bJM}
-v_{t_0,\varepsilon}\circ{\Psi_{m}}(\overline{D_{v}})\subset(-\infty,K_{t_{0}})
\end{equation}
where $K_{t_{0}}$ is independent of $m$ and $\varepsilon\in (0,\frac{1}{8}B_{0})$.
As $u$ is positive and smooth on $(-\infty,+\infty)$,
it follows that $\phi$ is uniformly bounded on $\overline{D}_{v}$ independent of $m$.

As $Supp(v''_{t_{0},\varepsilon})\subset\subset(-t_{0}-B_{0},-t_{0})$
and $|F^{2}|^2e^{-\varphi_{m}}\leq|F^{2}|^2e^{-\varphi}\leq 1$,
then it is clear that $(v''_{t_0,\varepsilon}\circ{\Psi_{m}})|F^{2}|^2e^{-\varphi_{m}-\Psi_{m}}$ are uniformly bounded on $\overline{D}_{v}$ independent of $m$.
Therefore the integrals
$\int_{D_v}(v''_{t_0,\varepsilon}\circ{\Psi_{m}})|F^{2}|^2e^{-\Phi}d\lambda_{n}$ are uniformly bounded independent of $m$,
for any given $v$, $t_0$, $\varepsilon$.

By weakly compactness of the unit ball of $L^{2}_{\varphi}(D_{v})$ and dominated convergence theorem, when $m\to+\infty$,
it follows that the weak limit of some weakly convergent subsequence of
$\{u_{v,t_{0},m,\varepsilon}\}_{m}$ gives function $u_{v,t_0,\varepsilon}$ on $D_v$ satisfying
\begin{equation}
\label{equ:3.3JM}
\int_{ D_v}|u_{v,t_0,\varepsilon}|^{2}e^{-\varphi}d\lambda_{n}\leq\frac{\mathbf{C}}{e^{A_{t_0}}}\int_{D_v}
(v''_{t_0,\varepsilon}\circ{\Psi})|F^{2}|^2e^{-\varphi-\Psi}d\lambda_{n},
\end{equation}
where $A_{t_0}:=\inf_{t\geq t_0}\{u(t)\}$.

Let $F_{v,t_0,m,\varepsilon}:=(1-v'_{t_0,\varepsilon}(\Psi_{m}))F^{2}-u_{v,t_0,m,\varepsilon}$,
which is a holomorphic function on $D_{v}$.

As $|F^{2}|^{2}e^{-\varphi_{m}}\leq|F^{2}|^{2}e^{-\varphi} \leq 1$,
then the integrals
$\int_{ D_v}|(1-v'_{t_0,\varepsilon}(\Psi_{m}))F^{2}|^{2}e^{-\varphi_{m}} d\lambda_{n}$ have a uniform bound independent of $m$.
Recall that the integrals
$$\int_{ D_v}|u_{v,t_{0},m,\varepsilon}|^{2}e^{-\varphi_{m}} d\lambda_{n}$$
have a uniform bound independent of $m$,
then the integrals
$$\int_{ D_v}|F_{v,t_0,m,\varepsilon}|^{2}e^{-\varphi_{m}} d\lambda_{n}$$
have a uniform bound independent of $m$.
Therefore we can choose a subsequence of $\{F_{v,t_0,m,\varepsilon}\}_{m=1,2,\cdots}$ from the chosen weakly convergent subsequence of $$(1-v'_{t_0,\varepsilon}(\Psi_{m}))F^{2}-u_{v,t_0,m,\varepsilon},$$
such that the subsequence is uniformly convergent on any compact subset of $D_{v}$,
and we still denote the subsequence by $\{F_{v,t_0,m,\varepsilon}\}_{m=1,2,\cdots}$ without ambiguity.
Let $$F_{v,t_0,\varepsilon}:=\lim_{m\to\infty}F_{v,t_0,m,\varepsilon}.$$

By above arguments,
it follows that the right hand side of inequality \ref{equ:3.2JM} are uniformly bounded independent of $m$ and $\varepsilon\in(0,\frac{1}{8}B_{0})$.
By inequality \ref{equ:3.2JM},
it follows that the integrals
$$\int_{ D_v}|u_{v,t_0,m,\varepsilon}|^{2}(\eta+g^{-1})^{-1} e^{-\phi-\varphi_{m}-\Psi_{m}}d\lambda_{n}$$
are uniformly bounded independent of $m$ and $\varepsilon\in(0,\frac{1}{8}B_{0})$.

Using inequality \ref{equ:20131130bJM},
we obtain that
$$(\eta+g^{-1})^{-1}=(s(-v_{t_{0},\varepsilon}\circ\Psi_{m})+\frac{s'^{2}}{u''s-s''}\circ(-v_{t_0,\varepsilon}\circ\Psi_{m}))^{-1}$$
and
$e^{-\phi}=e^{-u(-v_{t_{0},\varepsilon}\circ\Psi_{m})}$ have positive uniform lower bounds independent of $m$ and $\varepsilon\in(0,\frac{1}{8}B_{0})$.
Then the integrals
$$\int_{ K_0}|u_{v,t_0,m,\varepsilon}|^{2}e^{-\varphi_{m}-\Psi_{m}}d\lambda_{n}$$
have a uniform upper bound independent of $m$ and $\varepsilon\in(0,\frac{1}{8}B_{0})$
for any given compact set $K_{0}\subset\subset D_v$.

As $$Supp(v'_{t_0,\varepsilon}(\Psi_{m}))\subset \{\Psi_{m}>-t_{0}-1\},$$
it follows that
$$|v'_{t_0,\varepsilon}(\Psi_{m})|^{2}e^{-\Psi_{m}}\leq e^{t_{0}+1}.$$

Furthermore, as
$$|F|^{4}e^{-\varphi_{m}}\leq|F|^{4}e^{-\varphi}=e^{-2\max\{\psi-\log|F|^{2},0\}}\leq1$$
then the integrals
$$\int_{ K_0}|v'_{t_0,\varepsilon}(\Psi_{m})F^{2}|^{2}e^{-\varphi_{m}-\Psi_{m}}d\lambda_{n}$$
have a uniform upper bound independent of $m$ and $\varepsilon\in(0,\frac{1}{8}B_{0})$
for any given compact set $K_{0}\subset\subset D_v$.
Therefore the integrals
\begin{equation}
\label{equ:20131211JM}\int_{ K_0}|F_{v,t_0,m,\varepsilon}-F^{2}|^{2}e^{-\varphi_{m}-\Psi_{m}}d\lambda_{n}
\end{equation}
have a uniform upper bound independent of $m$ and $\varepsilon\in(0,\frac{1}{8}B_{0})$
for any given compact set $K_{0}\subset\subset D_v$.

As
$\varphi_{m'}+\Psi_{m'}\leq\varphi_{m}+\Psi_{m}$ where $m'\geq m$,
it follows that
$$|F_{v,t_0,m',\varepsilon}-F^{2}|^{2}e^{-(\varphi_{m}+\Psi_{m})}\leq|F_{v,t_0,m',\varepsilon}-F^{2}|^{2}e^{-(\varphi_{m'}+\Psi_{m'})}.$$
By inequality \ref{equ:20131211JM}, it follows that for any given compact set $K_{0}\subset\subset  D_v$,
the integrals $\int_{ K_0}|F_{v,t_0,m',\varepsilon}-F^{2}|^{2}e^{-(\varphi_{m}+\Psi_{m})}d\lambda_{n}$
have a uniform bound independent of $m$ and $m'$.
Therefore
for any given compact set $K_{0}\subset\subset\cap D_v$,
the integrals
$\int_{ K_0}|F_{v,t_0,\varepsilon}-F^{2}|^{2}e^{-(\varphi_{m}+\Psi_{m})}d\lambda_{n}$ have a uniform bound independent of $m$ and $\varepsilon\in(0,\frac{1}{8}B_{0})$.
It is clear that the integrals
\begin{equation}
\label{equ:20131211aJM}\int_{ K_0}|F_{v,t_0,\varepsilon}-F^{2}|^{2}e^{-\varphi-\Psi}d\lambda_{n}
\end{equation}
have a uniform upper bound independent of $\varepsilon\in(0,\frac{1}{8}B_{0})$
for any given compact set $K_{0}\subset\subset D_v$.

In summary,
we have $|F_{v,t_0,\varepsilon}-F^{2}|^{2}e^{-\varphi-\Psi}$ is integrable near $o$.
That is to say
$$(F_{v,t_0,\varepsilon}-F^{2},o)\in\mathcal{I}(\varphi+\Psi)_{o}.$$
By inequality \ref{equ:3.3JM},
it follows that
$F_{v,t_0,\varepsilon}$ is a holomorphic function on $D_{v}$ satisfying $(F_{v,t_0,\varepsilon}-F^{2},o)\in\mathcal{I}(\varphi)_{o}$ and
\begin{equation}
\label{equ:3.5JM}
\begin{split}
&\int_{ D_v}|F_{v,t_0,\varepsilon}-(1-v'_{t_0,\varepsilon}\circ\Psi)F^{2}|^{2}e^{-\varphi}d\lambda_{n}
\\\leq&\frac{\mathbf{C}}{e^{A_{t_0}}}\int_{D_v}(v''_{t_0,\varepsilon}\circ\Psi)|F^{2}|^{2}e^{-\varphi-\Psi}d\lambda_{n}.
\end{split}
\end{equation}

It suffices to find $\eta$ and $\phi$ such that
$(\eta+g^{-1})\leq \mathbf{C}e^{-\Psi_{m}}e^{-\phi}=\mathbf{C}\tilde{\mu}^{-1}$ on $D_v$.
As $\eta=s(-v_{t_0,\varepsilon}\circ\Psi_{m})$ and $\phi=u(-v_{t_0,\varepsilon}\circ\Psi_{m})$,
we have $(\eta+g^{-1}) e^{v_{t_0,\varepsilon}\circ\Psi_{m}}e^{\phi}=(s+\frac{s'^{2}}{u''s-s''})e^{-t}e^{u}\circ(-v_{t_0,\varepsilon}\circ\Psi_{m})$.

Summarizing the above discussion about $s$ and $u$, we are naturally led to a
system of ODEs:
\begin{equation}
\label{GZJM}
\begin{split}
&1).\,\,(s+\frac{s'^{2}}{u''s-s''})e^{u-t}=\mathbf{C}, \\
&2).\,\,s'-su'=1,
\end{split}
\end{equation}
where $t\in[0,+\infty)$, and $\mathbf{C}=1$.

It is not hard to solve the ODE system \ref{GZJM} (details see the following Remark)
and get $u=-\log(2-e^{-t})$ and
$s(t)=\frac{2t+e^{-t}}{2-e^{-t}}$.
It follows that $s\in C^{\infty}((0,+\infty))$ satisfies $s\geq1$, $u'\leq 0$ and
$u\in C^{\infty}((0,+\infty))$ satisfies $u''s-s''>0$.

As $u=-\log(2-e^{-t})$ is decreasing with respect to $t$,
then
$$\frac{\mathbf{C}}{e^{A_{t_0}}}=\frac{1}{\exp\inf_{t\geq t_{0}}u(t)}=\sup_{t\geq t_{0}}\frac{1}{e^{u(t)}}=\sup_{t\geq t_{0}}(2-e^{-t})=2,$$
for any $t_{0}\geq0$,
therefore we are done.
Now we obtain Proposition \ref{p:GZ_JM_smooth}.

\begin{Remark}
In fact, we can solve the equation \ref{GZJM} as follows:

By $2)$ of equation \ref{GZJM}, we have  $su''-s''=-s'u'$. Then $1)$ of equation \ref{GZJM} can be change into
$$(s-\frac{s'}{u'})e^{u-t}=\mathbf{C},$$ which is
$$\frac{su'-s'}{u'}e^{u-t}=\mathbf{C}.$$
By $2)$ of equation \ref{GZJM}, we have
$$\mathbf{C}=\frac{su'-s'}{u'}e^{u-t}=\frac{-1}{u'}e^{u-t},$$
which is
$$\frac{de^{-u}}{dt}=-u'e^{-u}=\frac{e^{-t}}{\mathbf{C}}.$$ Note that $2)$ of equation \ref{GZJM} is equivalent to
$\frac{d(se^{-u})}{dt}=e^{-u}$.
As $s\geq 1$ and $\lim_{t\to+\infty}u=0$,
it follows that the solution
\begin{displaymath}
     \begin{cases}
      u=-\log(2-e^{-t}), \\
      s=\frac{2t+e^{-t}}{2-e^{-t}},\end{cases}
    \end{displaymath}
\end{Remark}

\subsection{Proof of Proposition \ref{p:GZ_JM201312}}
$\\$

Given $t_0$ and $D_{v}$,
it is clear that $(v''_{t_0,\varepsilon}\circ\Psi)|F^{2}|^{2}e^{-\varphi-\Psi}$ have a uniform bound on $D_{v}$ independent of $\varepsilon$.
Then the integrals
$\int_{D_v}(v''_{t_0,\varepsilon}\circ\Psi)|F^{2}|^{2}e^{-\varphi-\Psi}d\lambda_{n}$ have a uniform bound independent of $\varepsilon$,
for any given $t_0$ and $D_v$.

As $|(v'_{t_0,\varepsilon}\circ\Psi)F^{2}|^{2}e^{-\varphi}$ have a uniform bound on $D_{v}$ independent of $\varepsilon$,
it follows that the integrals $\int_{D_v}|(1-v'_{t_0,\varepsilon}\circ\Psi)F^{2}|^{2}e^{-\varphi}d\lambda_{n}$ have a uniform bound independent of $\varepsilon$,
for any given $t_0$ and $D_v$.

As
\begin{equation}
\label{}
\begin{split}
&\int_{ D_v}|F_{v,t_0,\varepsilon}|^{2}e^{-\varphi}d\lambda_{n}
\\&\leq
\int_{ D_v}|F_{v,t_0,\varepsilon}-(1-v'_{t_0,\varepsilon}\circ\Psi)F^{2}|^{2}e^{-\varphi}d\lambda_{n}
+\int_{ D_v}|(1-v'_{t_0,\varepsilon}\circ\Psi)F^{2}|^{2}e^{-\varphi}d\lambda_{n}
\\&\leq\frac{\mathbf{C}}{e^{A_{t_0}}}\int_{D_v}(v''_{t_0,\varepsilon}\circ\Psi)|F^{2}|^{2}e^{-\varphi-\Psi}d\lambda_{n}
+\int_{ D_v}|(1-v'_{t_0,\varepsilon}\circ\Psi)F^{2}|^{2}e^{-\varphi}d\lambda_{n},
\end{split}
\end{equation}
then the integrals $\int_{ D_v}|F_{v,t_0,\varepsilon}|^{2}e^{-\varphi}d\lambda_{n}$ have a uniform bound independent of $\varepsilon$.

As $\bar\partial F_{v,t_{0},\varepsilon}=0$ when $\varepsilon\to 0$ and the unit ball of $L^{2}_{\varphi}(D_{v})$ is weakly compact,
it follows that the weak limit of some weakly convergent subsequence of
$\{F_{v,t_0,\varepsilon}\}_{\varepsilon}$ gives us a holomorphic function $F_{v,t_0}$ on $\Delta^{n}$.
Then we can also choose a subsequence of the weakly convergent subsequence of $\{F_{v,t_0,\varepsilon}\}_{\varepsilon}$,
such that the chosen sequence is uniformly convergent on any compact subset of $D_v$, denoted by $\{F_{v,t_0,\varepsilon}\}_{\varepsilon}$ without ambiguity.
For any given compact subset $K_{0}$ on $D_v$, $F_{v,t_0,\varepsilon}$,
$|(1-v'_{t_0,\varepsilon}\circ\Psi)F^{2}|^{2}e^{-\varphi}$ and $(v''_{t_0,\varepsilon}\circ\Psi)|F^{2}|^{2}e^{-\varphi-\Psi}$
have uniform bounds on $K_{0}$ independent of $\varepsilon$.

By inequality \ref{equ:20131211aJM},
it follows that
$$(F_{v,t_0}-F^{2},o)\in \mathcal{I}(\varphi+\Psi)_{o}.$$
Using the dominated convergence theorem on any compact subset $K$ of $D_v$ and Proposition \ref{p:GZ_JM_smooth},
we obtain
\begin{equation}
\begin{split}
&\int_{K}|F_{v,t_0}-(1-b_{t_0}(\Psi))F^{2}|^{2}e^{-\varphi}d\lambda_{n}
\\\leq&2\int_{D_v}(\mathbb{I}_{\{-t_{0}-1< t<-t_{0}\}}\circ\Psi)|F^{2}|^{2}e^{-\varphi-\Psi}d\lambda_{n}.
\end{split}
\end{equation}
Then Proposition \ref{p:GZ_JM201312} has thus been proved.

\section{Proofs of the main results}

\subsection{Proof of Theorem \ref{t:strong1015}}
$\\$

We will prove Theorem \ref{t:strong1015}
by the methods of induction and contradiction, and
by using dynamically $L^{2}$ extension theorem with negligible weight.

Let $\{p_{j}\}_{j=1,2,\cdots}$ be a sequence of positive numbers which are strictly decreasingly convergent to $1$, when $j$ goes to infinity.

\subsubsection{Step 1: Theorem \ref{t:strong1015} for dimension 1 case}
$\\$

We first consider Theorem \ref{t:strong1015} for dimension 1 case,
which is elementary but
revealing.

We choose $r_{0}$ small enough,
such that $\{F=0\}\cap\Delta_{r_0}\subset \{o\}$.

As $\int_{\Delta}|F|^{2}e^{-\varphi}d\lambda_{1}<+\infty$,
by Lemma \ref{l:open_a},
we have
$$\liminf_{A\to+\infty}\mu(\{|F|^{2}e^{-\varphi}> A\})u(A)=0.$$

It is clear that,
for any give $B>0$,
there exists $A>B$ and $z_{A}\in\Delta_{u(A)^{-1/2}}$,
such that $e^{-\varphi(z_{A})}|F(z_{A})|^{2}\leq A$.
We can assume that $A>10$.

Let $\psi=-\log2$,
then $\log|z'-z_{A}|+\psi<0$.

Using Theorem \ref{t:guan-zhou12} on $\Delta$,
we obtain holomorphic function $F_{A}$ on $\Delta$ for each $A$ and $p_{j_A}\varphi$ ($j_{A}\in\{1,2,\cdots\}$),
such that
$F_{A}|_{z_{A}}=F(z_{A})$,
and
\begin{equation}
\label{equ:0927a}
\int_{\Delta}|F_{A}|^{2}e^{-p_{j_{A}}\varphi}d\lambda_{1}<8\pi A.
\end{equation}

By the negativeness of $\varphi$ and $p_{j_{A}}\varphi$,
it follows that
\begin{equation}
\label{equ:0927b}\int_{\Delta}|F_{A}|^{2}d\lambda_{1}<8\pi A.
\end{equation}

Assume Theorem \ref{t:strong1015} for $n=1$ is not true.
Therefore
$$\int_{\Delta_{r}}|F|^{2}e^{-p_{j}\varphi}d\lambda=+\infty,$$
for any $r>0$ and any $j\in\{1,2,\cdots\}$.

Since $\{F=0\}\cap\Delta_{r_0}\subset \{o\}$,
then it follows from inequality \ref{equ:0927a}
that one can derive that $F$/$F_A$ is unbounded.
Otherwise,
the boundedness would imply the finiteness of the integral of $|F|^2e^{-p_{j}\varphi}$,
according to inequality \ref{equ:0927a}.
This contradicts to the assumption.
Then there exists a holomorphic function $h_{A}$ on $\Delta_{r_0}$,
such that

1). $F_{A}|_{\Delta_{r_0}}=F|_{\Delta_{r_0}}h_{A}$;

2). $h_{A}(o)=0$;

3). $h_{A}(z_{A})=1$.

By Lemma \ref{l:open_b},
it follows that
$$\int_{\Delta_{r_0}}|F_{A}|^{2}d\lambda_{1}>C_{1}u(A),$$
where $C_{1}$ is independent of $A$.

It contradicts to
$$\int_{\Delta}|F_{A}|^{2}d\lambda_{1}<8\pi A.$$

We have thus proved Theorem \ref{t:strong1015} for $n=1$.

\subsubsection{Step 2:  Theorem \ref{t:strong1015} for $n=k$}
$\\$

Assume Theorem \ref{t:strong1015}
$$\int_{\Delta^{k}_{r}}|F|^{2}e^{-\varphi}d\lambda_{k}<+\infty,$$
for some $r>0$,
and
$$\int_{\Delta^{k}_{r}}|F|^{2}e^{-p_{j}\varphi}d\lambda_{k}=+\infty,$$
for any $r>0$ and $j\in\{1,2,\cdots\}$.

Then the germ of the holomorphic function $F$ is in $\mathcal{I}(\varphi)_{o}$
but is not in $(\cup_{j=1}^{\infty}\mathcal{I}(p_{j}\varphi))_{o}$.

Using Proposition \ref{r:curve},
we have a germ of analytic curve $\gamma$ through $o$
satisfying $\{F|_{\gamma}=0\}=\{o\}$,
such that
for any germ of holomorphic function $g$ in $\cup_{j=1}^{\infty}\mathcal{I}(p_{j}\varphi)$,
and we also have a holomorphic function $h_{g}$ on $\gamma$ satisfying
$$h_{g}|_{o}=0,$$
such that
\begin{equation}
\label{equ:infact}
g|_{\gamma}=F|_{\gamma}h_{g}.
\end{equation}

Then we can choose biholomorphic map $\imath$ from a neighborhood of
$\overline{\Delta'\times\Delta''}$ to a neighborhood $V_{o}\subset \Delta^{k}$ of $o$,
which is small enough, with origin keeping $\imath(o)=o$,
such that

1), $\imath^{-1}(\gamma)$ is a closed analytic curve of the neighborhood of $\overline{\Delta'\times\Delta''}$;

2), $\imath^{-1}(\gamma)$ satisfies the parametrization property as analytic curve $\mathcal{C}$ in Remark \ref{r:para}.

Note that
$$\int_{V_{o}}|F|^{2}e^{-p_{j}\varphi}d\lambda_{n}=
\int_{\Delta'\times\Delta''}|\imath^{*}(F)|^{2}e^{-\imath^{*}(p_{j}\varphi)}\imath^{*}(d\lambda_{n}).$$
Then
$$\int_{\Delta^{k}_{r}}|F|^{2}e^{-p_{j}\varphi}d\lambda_{n}=+\infty,$$
for any $r>0$ and any $j\in\{1,2,\cdots\}$, is equivalent to
$$\int_{\Delta'_{r}\times\Delta''_{r}}|\imath^{*}(F)|^{2}e^{-\imath^{*}(p_{j}\varphi)}d\lambda_{n}=+\infty,$$
for any $r>0$ and any $j\in\{1,2,\cdots\}$.

As $\int_{\Delta'\times\Delta''}|\imath^{*}(F)|^{2}e^{-\imath^{*}(\varphi)}d\lambda_{k}<+\infty$,
it follows from Lemma \ref{l:open_a} that
$$\liminf_{A\to+\infty}\mu(\{z_{1}|\int_{\pi^{-1}(z_{1})}|\imath^{*}(F)|^{2}e^{-\imath^{*}(\varphi)}d\lambda_{k-1}> A\})u(A)=0,$$
where $\pi$ is the projection in Remark \ref{r:para}.

It is clear that
for any give $B>0$,
there exists $A>B$,
such that
$$\{z_{1}|\int_{\pi^{-1}(z_{1})}|\imath^{*}(F)|^{2}e^{-\imath^{*}(\varphi)}d\lambda_{k-1}> A\}$$
cannot contain
$\Delta_{u(A)^{-1/2}}$.

As $\lambda_{n}(\{\imath^{*}(p_{1}\varphi)=-\infty\})=0$,
then $\lambda_{1}(\{z'|\imath^{*}(p_{1}\varphi)|_{\pi^{-1}(z')}\equiv -\infty\})=0$.
Then
for any given $B>0$,
there exists $A>B$ and $z_{A}\in\Delta_{u(A)^{-1/2}}$,
satisfying
$$\int_{\pi^{-1}(z_{A})}|\imath^{*}(F)|^{2}e^{-\imath^{*}(\varphi)}d\lambda_{k-1}\leq A,$$
such that
$$\imath^{*}(p_{1}\varphi)|_{\pi^{-1}(z_{A})}\not\equiv -\infty.$$
We assume that $A>e^{10}$.

\subsubsection{Using dynamically $L^2$ extension theorems with negligible weight.}
$\\$

As Theorem \ref{t:strong1015} for $n=k-1$ holds,
and $\imath^{*}(p_{1}\varphi)|_{\pi^{-1}(z_{A})}\not\equiv -\infty$,
then there exists $j_{A}\in\{1,2,\cdots\}$,
such that $$\int_{\pi^{-1}(z_{A})}|\imath^{*}(F)|^{2}e^{-\imath^{*}(p_{j_{A}}\varphi)}d\lambda_{k-1}<2A.$$

Let $\psi=-\log2$,
then $\log|z'-z_{A}|+\psi<0$.
By Theorem \ref{t:guan-zhou12} on $\Delta'\times\Delta''$,
we obtain a holomorphic function $F_{A}$ on $\Delta'\times\Delta''$ for each $A$,
such that
$F_{A}|_{\pi^{-1}(z_{A})}=\imath^{*}(F)|_{\pi^{-1}(z_{A})}$,
and
$$\int_{\Delta'\times\Delta''}|F_{A}|^{2}e^{-\imath^{*}(p_{j_{A}}\varphi)}d\lambda_{k}<8\pi A.$$

It follows form equality \ref{equ:infact},
that there exists a holomorphic function on $\gamma$,
denoted by $h_{A}$,
such that
\begin{equation}
\label{equ:infact1}
\imath_{*}F_{A}|_{\gamma}=F|_{\gamma}h_{A},
\end{equation}
therefore,
\begin{equation}
\label{equ:infact1}
F_{A}|_{\imath^{-1}(\gamma)}=\imath^{*}(F)|_{\imath^{-1}(\gamma)}\imath^{*}(h_{A}),
\end{equation}
where $h_{A}(0)=0$, $\imath^{*}(h_{A})(\imath^{-1}(\gamma)\cap \pi^{-1}(z_{A}))=1$.

By the negativeness of $\varphi$,
it is clear that
$$\int_{\Delta'\times\Delta''}|F_{A}|^{2}d\lambda_{1}<8\pi A.$$

Using equality \ref{equ:infact1},
the condition $|z_{A}|<u(A)^{-\frac{1}{2}}$,
and Lemma \ref{l:open_sing},
we have
$$\int_{\imath^{-1}(\gamma)}|F_{A}|^{2}\pi|_{\mathcal{C}_{S}}^{*}d\lambda_{\Delta'}>C_{1}u(A),$$
where $C_{1}>0$ is independent of $A$ and $F_{A}$.
In our use of Lemma \ref{l:open_sing}, $z_{A}$ corresponds to $a$ in Lemma \ref{l:open_sing},
the function $\imath^{*}(h_{A})$ corresponds to $f_{a}$ in Lemma \ref{l:open_sing},
which does not correspond to $h$ in Lemma \ref{l:open_sing}
(actually $\imath^{*}(F)|_{\imath^{-1}(\gamma)}$ corresponds to to $h$ in Lemma \ref{l:open_sing}).

Using Lemma \ref{l:approx.L2},
we obtain
$$\int_{\Delta'\times\Delta''}|F_{A}|^{2}d\lambda_{k}\geq C_{3}\int_{\imath^{-1}(\gamma)}|F_{A}|^{2}
\pi|_{\mathcal{C}_{S}}^{*}d\lambda_{\Delta'},$$
where $C_{3}>0$ is independent of $A$ and $F_{A}$.

Therefore
$$\int_{\Delta'\times\Delta''}|F_{A}|^{2}d\lambda_{k}\geq C_{1} C_{3}u(A),$$
which contradicts to
$$\int_{\Delta'\times\Delta''}|F_{A}|^{2}d\lambda_{k}<8\pi A,$$
for $A$ large enough.

We have thus proved Theorem \ref{t:strong1015} for
$n=k$.

The proof of Theorem \ref{t:strong1015} is thus complete.

\subsubsection{Some remarks of Theorem \ref{t:strong1015}}
$\\$

Let $\varphi$ be a negative
plurisubharmonic function on
$\Delta^{n}\subset\mathbb{C}^{n}$,
and $\{\psi_{j}\}_{j=1,2,\cdots}$ be a sequence of plurisubharmonic
functions on $\Delta^{n}$, which is increasingly convergent to
$\varphi$ on $\Delta^{n}$, when $j\to\infty$.

Without loss of generality, one can assume that $\psi_{1}\not\equiv-\infty$.

In the proof of Theorem \ref{t:strong1015}, replacing $p_{j}\varphi$ by $\psi_{j}$, and $p_{j_{A}}\varphi$ by $\psi_{j_{A}}$,
one can obtain:

Let $F$ be a holomorphic function on $\Delta^{n}$,
such that
$$\int_{\Delta^{n}}|F|^{2}e^{-\varphi}d\lambda_{n}<+\infty.$$
Then there exists a
number $j_{0}\geq 1$, such that
$$\int_{\Delta^{n}_{r}}|F|^{2}e^{-\psi_{j_{0}}}d\lambda_{n}<+\infty,$$
for some $r\in(0,1)$.

That is to say:
\begin{equation}
\label{equ:2014113a}
\cup_{j=1}^{\infty}\mathcal{I}(\psi_{j})=\mathcal{I}(\varphi).
\end{equation}

In particular, let $\psi_{j}=\varphi+\frac{1}{j}\varphi_{0}$ in equality \ref{equ:2014113a},
then we get the following modified version of the strong openness conjecture which was conjectured in  \cite{kim10}:

Let $\varphi$ be a negative plurisubharmonic function on $\Delta^{n}\subset\mathbb{C}^{n}$,
and $\varphi_{0}\not\equiv-\infty$ be a negative plurisubharmonic function on $\Delta^{n}$.
Then
$$\cup_{\varepsilon>0}\mathcal{I}(\varphi+\varepsilon\varphi_{0})=\mathcal{I}(\varphi).$$


\subsection{Proof of Theorem \ref{t:GZ_JM}}
$\\$

We prove Theorem \ref{t:GZ_JM} by contradiction:
if Theorem \ref{t:GZ_JM} is not true,
then there exists $t_{j}\to +\infty$ $(j\to+\infty)$,
such that
\begin{equation}
\label{equ:20131013a}
\lim_{j\to\infty}\int_{\Delta^{n}}\mathbb{I}_{\{-t_{j}-1<\varphi<-t_{j}\}}|F|^{2}e^{-\varphi}d\lambda_{n}=0.
\end{equation}

Let $D_{v}$ be a strongly pseudoconvex domain relatively compact in $\Delta^{n}$ containing $o$,
it follows that
\begin{equation}
\label{equ:20131013}
\begin{split}
\int_{\Delta^{n}}\mathbb{I}_{\{-t_{j}-1<\varphi<-t_{j}\}}|F|^{2}e^{-\varphi}d\lambda_{n}
\geq \int_{D_v}\mathbb{I}_{\{-t_{j}-1<\varphi<-t_{j}\}}|F|^{2}e^{-\varphi}d\lambda_{n}.
\end{split}
\end{equation}

According to equality \ref{equ:20131013a} and inequality \ref{equ:20131013},
it follows that
\begin{equation}
\label{equ:20131120a}
\begin{split}\lim_{j\to+\infty}\int_{D_v}\mathbb{I}_{\{-t_{j}-1<\varphi<-t_{j}\}}|F|^{2}e^{-\varphi}d\lambda_{n}=0.
\end{split}
\end{equation}

By Proposition \ref{p:GZ_JM},
it follows that there exists $F_{v,t_{j}}$,
which is a holomorphic function on $D_{v}$
satisfying:

\begin{equation}
\label{equ:13.10.11a}
\begin{split}
&\int_{ D_v}|F_{v,t_j}-(1-b_{t_j}(\varphi))F|^{2}d\lambda_{n}
\\\leq&\int_{D_v}\mathbb{I}_{\{-t_{j}-1<\varphi<-t_{j}\}}|F|^{2}e^{-\varphi}d\lambda_{n}.
\end{split}
\end{equation}
and
$$(F_{v,t_{j}}-F,o)\in\mathcal{I}(\varphi)_{o}.$$

As $|(1-b_{t_j}(\varphi))F|$ on $D_{v}$ have a uniform bound independent of $j$,
then $\int_{ D_v}|(1-b_{t_j}(\varphi))F|^{2}d\lambda_{n}$ have a uniform bound independent of $j$.

According to equality \ref{equ:20131120a} and inequality \ref{equ:13.10.11a},
it follows that $\int_{ D_v}|F_{v,t_j}-(1-b_{t_j}(\varphi))F|^{2}d\lambda_{n}$ have a uniform bound independent of $j$.

Using
\begin{equation}
\label{equ:20131120b}
\begin{split}
(\int_{ D_v}|F_{v,t_j}|^{2}d\lambda_{n})^{\frac{1}{2}}&\leq
(\int_{ D_v}|F_{v,t_j}-(1-b_{t_j}(\varphi))F|^{2}d\lambda_{n})^{\frac{1}{2}}\\&+(\int_{ D_v}|(1-b_{t_j}(\varphi))F|^{2}d\lambda_{n})^{\frac{1}{2}},
\end{split}
\end{equation}
we have $\int_{ D_v}|F_{v,t_j}|^{2}d\lambda_{n}$ have a uniform bound independent of $j$.
Then there is a subsequence of $F_{v,t_j}$ denoted by $F_{v,t_j}$ without ambiguity,
which is convergent to a holomorphic function $F_{v}$ uniformly on any compact subset of $D_{v}$.
Then for any $K\subset\subset D_{v}$,
we have
\begin{equation}
\label{equ:20131120e}
\begin{split}
\int_{ K}|F_{v}|^{2}d\lambda_{n}\leq\liminf_{j\to+\infty}\int_{ D_v}|F_{v,t_j}|^{2}d\lambda_{n}.
\end{split}
\end{equation}
Therefore
\begin{equation}
\label{equ:20131120f}
\begin{split}
\int_{D_v}|F_{v}|^{2}d\lambda_{n}\leq\liminf_{j\to+\infty}\int_{ D_v}|F_{v,t_j}|^{2}d\lambda_{n}.
\end{split}
\end{equation}

As $\{(1-b_{t_j}(\varphi))F\}_{j=1,2,\cdots}$ goes to zero when $j\to+\infty$ and
$|(1-b_{t_j}(\varphi))F|$ on $D_{v}$ have a uniform bound independent of $j$,
by the Lebesgue dominated convergence theorem£¬
it follows that
\begin{equation}
\label{equ:20131120c}
\begin{split}
\lim_{j\to+\infty}\int_{ D_v}|(1-b_{t_j}(\varphi))F|^{2}d\lambda_{n}=0.
\end{split}
\end{equation}

According to equality \ref{equ:20131120a}, inequality \ref{equ:13.10.11a},
it follows that
\begin{equation}
\label{equ:20131120d}
\begin{split}
\lim_{j\to+\infty}\int_{ D_v}|F_{v,t_j}-(1-b_{t_j}(\varphi))F|^{2}d\lambda_{n}=0.
\end{split}
\end{equation}

Using equality \ref{equ:20131120c}, equality \ref{equ:20131120d} and inequality \ref{equ:20131120b},
we have
\begin{equation}
\label{equ:20131120g}
\begin{split}
\liminf_{j\to+\infty}\int_{ D_v}|F_{v,t_j}|^{2}d\lambda_{n}=0.
\end{split}
\end{equation}

According to inequality \ref{equ:20131120g} and inequality \ref{equ:20131120f},
it follows that
\begin{equation}
\label{equ:13.10.11b}
\begin{split}
&\int_{ D_v}|F_{v}|^{2}d\lambda_{n}\leq0.
\end{split}
\end{equation}

As $(F_{v,t_{j}}-F,o)\in\mathcal{I}(\varphi)_{o},$
and by Remark \ref{r:131011},
we have $F_{v}\not\equiv0$,
which contradicts to inequality \ref{equ:13.10.11b}.
Then Theorem \ref{t:GZ_JM} here has thus been proved.

\subsection{Proof of Theorem \ref{t:GZ_JM201312}}
$\\$

We prove Theorem \ref{t:GZ_JM201312} by contradiction:
if Theorem \ref{t:GZ_JM201312} is not true,
then there exists $t_{j}\to +\infty$ $(j\to+\infty)$ and $B_{j}\in(0,1]$,
such that
\begin{equation}
\label{equ:20131013a}
\begin{split}
&\lim_{j\to\infty}\int_{\Delta^{n}}\frac{1}{B_{j}}\mathbb{I}_{\{-t_{j}-B_{j}<\Psi<-t_{j}\}}e^{-\psi+\log|F|^{2}}d\lambda_{n}
\\&=\lim_{j\to\infty}\int_{\Delta^{n}}\frac{1}{B_{j}}\mathbb{I}_{\{-t_{j}-B_{j}<\psi-\log|F|^{2}<-t_{j}\}}e^{-\psi+\log|F|^{2}}d\lambda_{n}=0.
\end{split}
\end{equation}

Let $t_{j}>10$, for all $j$.
Note that
\begin{equation}
\label{}
\begin{split}
&e^{-\max\{\psi-\log|F|^{2},0\}}|_{\{-t_{j}-B_{j}<\Psi<-t_{j}\}}
\\&=e^{-\max\{\psi-\log|F|^{2},0\}}|_{\{-t_{j}-B_{j}<\min\{\psi-\log|F|^{2},0\}-1<-t_{j}\}}
\\&=e^{-\max\{\psi-\log|F|^{2},0\}}|_{\{-t_{j}-B_{j}<\psi-\log|F|^{2}-1<-t_{j}\}}
\\&=e^{-\max\{\psi-\log|F|^{2},0\}}|_{\{-t_{j}-B_{j}+1<\psi-\log|F|^{2}<-t_{j}+1\}}
\\&\leq e^{0}=1
\end{split}
\end{equation}
then
\begin{equation}
\label{equ:20131204c}
\begin{split}
&\lim_{j\to\infty}\int_{\Delta^{n}}\frac{1}{B_{j}}\mathbb{I}_{\{-t_{j}-B_{j}<\Psi<-t_{j}\}}|F|^{4}e^{-\varphi-\Psi}d\lambda_{n}
\\&\leq
\lim_{j\to\infty}\int_{\Delta^{n}}\frac{1}{B_{j}}\mathbb{I}_{\{-t_{j}-B_{j}<\Psi<-t_{j}\}}e^{-2\max\{\psi-\log|F|^{2},0\}-\min\{\psi-\log|F|^{2},0\}+1}d\lambda_{n}
\\&\leq
\lim_{j\to\infty}\int_{\Delta^{n}}\frac{1}{B_{j}}\mathbb{I}_{\{-t_{j}-B_{j}<\Psi<-t_{j}\}}e^{-\max\{\psi-\log|F|^{2},0\}-\psi+\log|F|^{2}+1}d\lambda_{n}
\\&\leq\lim_{j\to\infty}\int_{\Delta^{n}}\frac{1}{B_{j}}\mathbb{I}_{\{-t_{j}-B_{j}<\Psi<-t_{j}\}}ee^{-\psi+\log|F|^{2}}d\lambda_{n}=0.
\end{split}
\end{equation}

Let $D_{v}$ be a strongly pseudoconvex domain relatively compact in $\Delta^{n}$ containing $o$,
it follows that
\begin{equation}
\label{equ:20131013}
\begin{split}
\int_{\Delta^{n}}\frac{1}{B_{j}}\mathbb{I}_{\{-t_{j}-B_{j}<\Psi<-t_{j}\}}|F|^{4}e^{-\varphi-\Psi}d\lambda_{n}
\geq \int_{D_v}\frac{1}{B_{j}}\mathbb{I}_{\{-t_{j}-B_{j}<\Psi<-t_{j}\}}|F|^{4}e^{-\varphi-\Psi}d\lambda_{n}.
\end{split}
\end{equation}

According to equality \ref{equ:20131013a} and inequality \ref{equ:20131013},
it follows that
\begin{equation}
\label{equ:20131120a}
\begin{split}\lim_{j\to+\infty}\int_{D_v}\frac{1}{B_{j}}\mathbb{I}_{\{-t_{j}-B_{j}<\Psi<-t_{j}\}}|F|^{4}e^{-\varphi-\Psi}d\lambda_{n}=0.
\end{split}
\end{equation}

By Proposition \ref{p:GZ_JM201312},
it follows that there exists $F_{v,t_{j}}$,
which is a holomorphic function on $D_{v}$
satisfying:

\begin{equation}
\label{equ:13.10.11a}
\begin{split}
&\int_{ D_v}|F_{v,t_j}-(1-b_{t_j}(\Psi))F^2|^{2}e^{-\varphi}d\lambda_{n}
\\\leq&\int_{D_v}\frac{1}{B_{j}}\mathbb{I}_{\{-t_{j}-B_{j}<\Psi<-t_{j}\}}|F|^{4}e^{-\varphi-\Psi}d\lambda_{n},
\end{split}
\end{equation}
and
$$(F_{v,t_{j}}-F^{2},o)\in\mathcal{I}(\varphi+\Psi)_{o}.$$

As $|(1-b_{t_j}(\Psi))F^{2}|$ on $D_{v}$ have a uniform bound independent of $j$,
then $\int_{ D_v}|(1-b_{t_j}(\Psi))F^{2}|^{2}e^{-\varphi}d\lambda_{n}$ have a uniform bound independent of $j$.

According to equality \ref{equ:20131120a} and inequality \ref{equ:13.10.11a},
it follows that $\int_{ D_v}|F_{v,t_j}-(1-b_{t_j}(\Psi))F^{2}|^{2}e^{-\varphi}d\lambda_{n}$ have a uniform bound independent of $j$.

Using
\begin{equation}
\label{equ:20131120b}
\begin{split}
(\int_{ D_v}|F_{v,t_j}|^{2}e^{-\varphi}d\lambda_{n})^{\frac{1}{2}}&\leq
(\int_{ D_v}|F_{v,t_j}-(1-b_{t_j}(\Psi))F^{2}|^{2}e^{-\varphi}d\lambda_{n})^{\frac{1}{2}}\\&+(\int_{ D_v}|(1-b_{t_j}(\Psi))F^{2}|^{2}e^{-\varphi}d\lambda_{n})^{\frac{1}{2}},
\end{split}
\end{equation}
we have $\int_{ D_v}|F_{v,t_j}|^{2}e^{-\varphi}d\lambda_{n}$ have a uniform bound independent of $j$.
Then there is a subsequence of $F_{v,t_j}$ denoted by $F_{v,t_j}$ without ambiguity,
which is convergent to a holomorphic function $F_{v}$ uniformly on any compact subset of $D_{v}$.
Then for any $K\subset\subset D_{v}$,
we have
\begin{equation}
\label{equ:20131120e}
\begin{split}
\int_{ K}|F_{v}|^{2}e^{-\varphi}d\lambda_{n}\leq\liminf_{j\to+\infty}\int_{ D_v}|F_{v,t_j}|^{2}e^{-\varphi}d\lambda_{n}.
\end{split}
\end{equation}
Therefore
\begin{equation}
\label{equ:20131120f}
\begin{split}
\int_{D_v}|F_{v}|^{2}e^{-\varphi}d\lambda_{n}\leq\liminf_{j\to+\infty}\int_{ D_v}|F_{v,t_j}|^{2}e^{-\varphi}d\lambda_{n}.
\end{split}
\end{equation}

As $\{(1-b_{t_j}(\varphi))F^{2}\}_{j=1,2,\cdots}$ goes to zero when $j\to+\infty$ and
$|(1-b_{t_j}(\varphi))F^{2}|$ on $D_{v}$ have a uniform bound independent of $j$,
by the Lebesgue dominated convergence theorem£¬
it follows that
\begin{equation}
\label{equ:20131120c}
\begin{split}
\lim_{j\to+\infty}\int_{ D_v}|(1-b_{t_j}(\Psi))F^{2}|^{2}e^{-\varphi}d\lambda_{n}=0.
\end{split}
\end{equation}

According to equality \ref{equ:20131120a}, inequality \ref{equ:13.10.11a},
it follows that
\begin{equation}
\label{equ:20131120d}
\begin{split}
\lim_{j\to+\infty}\int_{ D_v}|F_{v,t_j}-(1-b_{t_j}(\Psi))F^{2}|^{2}e^{-\varphi}d\lambda_{n}=0.
\end{split}
\end{equation}

Using equality \ref{equ:20131120c}, equality \ref{equ:20131120d} and inequality \ref{equ:20131120b},
we have
\begin{equation}
\label{equ:20131120g}
\begin{split}
\liminf_{j\to+\infty}\int_{ D_v}|F_{v,t_j}|^{2}e^{-\varphi}d\lambda_{n}=0.
\end{split}
\end{equation}

According to inequality \ref{equ:20131120g} and inequality \ref{equ:20131120f},
it follows that
\begin{equation}
\label{equ:13.10.11b}
\begin{split}
&\int_{ D_v}|F_{v}|^{2}e^{-\varphi}d\lambda_{n}\leq0.
\end{split}
\end{equation}

As $(F_{v,t_{j}}-F^{2},o)\in\mathcal{I}(\varphi+\Psi)_{o},$
and by Remark \ref{r:20131210} and \ref{r:131011},
we have $F_{v}\not\equiv0$,
which contradicts to inequality \ref{equ:13.10.11b}.
Then Theorem \ref{t:GZ_JM201312} here has thus been proved.

\subsection{Proof of Proposition \ref{t:GZ_open1026}}
$\\$

We prove Proposition \ref{t:GZ_open1026} by contradiction.
If $e^{-\phi}$ is integrable near $o\in\Delta^{n}$,
then there exists a strong pseudoconvex domain $\Omega\subset\subset\Delta^{n}$,
such that $e^{-\phi}$ is $L^{1}$ integrable on $\Omega$.

Without losing of generality,
we assume that $\Omega=\mathbb{B}(o,r)$,
where $r>0$ small enough.

As $e^{-\phi}$ is $L^{1}$ integrable on $\Omega$,
then
\begin{equation}
\label{equ:20131121a}\lim_{R\to+\infty}e^{R}\mu(\{\phi<-R\})=0.
\end{equation}
Therefore there exists $t_{1}>0$,
such that
\begin{equation}
\label{equ:20131121f}
\mu(\{\phi<-t_{1}+1\})<\frac{1}{6}\mu(\Omega).
\end{equation}

As $\{\phi_{m}\}_{m=1,2,\cdots}$ is convergent to $\phi$ in Lebesgue measure,
then there exists $m_{0}>0$,
such that for any $m\geq m_{0}$,
\begin{equation}
\label{equ:20131121g}
\mu(\{|\phi_{m}-\phi|\geq 1\})<\frac{1}{12}\mu(\Omega).
\end{equation}

Note that
$$(\{\phi_{m}<-t_{1}\}\setminus\{|\phi_{m}-\phi|\geq 1\})\subset\{\phi<-t_{1}+1\},$$
for any $m\geq m_{0}$.
Therefore
\begin{equation}
\label{equ:20131121b}
\mu(\{\phi_{m}<-t_{1}\})\leq\mu(\{\phi<-t_{1}+1\})+\mu(\{|\phi_{m}-\phi|\geq 1\})<\frac{1}{4}\mu(\Omega),
\end{equation}
for any $m\geq m_{0}$.

In Proposition \ref{p:GZ_JM},
let $F\equiv 1$ and $\varphi=\phi_{m}$,
then there exists a holomorphic function $F_{v,t_{0}}$ on $\Omega$,
satisfying:
\begin{equation}
\label{equ:20131121c}
F_{v,t_{0}}|_{o}=F=1
\end{equation}
and
\begin{equation}
\label{equ:20131026b}
\begin{split}
&\int_{\Omega}|F_{v,t_0}-(1-b_{t_0}(\phi_{m}))F|^{2}d\lambda_{n}
\\\leq&\int_{\Omega}(\mathbb{I}_{\{-t_{0}-1< t<-t_{0}\}}\circ\phi_{m})|F|^{2}e^{-\phi_{m}}d\lambda_{n}.
\end{split}
\end{equation}

By submean inequality of plurisubharmonic function,
it follows from \ref{equ:20131121c}
that
\begin{equation}
\label{equ:20131027a}
\begin{split}
\int_{\Omega}|F_{v,t_0}|^{2}d\lambda_{n}\geq\mu(\Omega).
\end{split}
\end{equation}

It follows from inequality \ref{equ:20131121b}, equality \ref{equ:20131121c},
and $b_{t_{0}}(t)|_{\{t\geq -t_{0}\}}=1$,
that
\begin{equation}
\label{equ:20131121d}
\begin{split}
\int_{\Omega}|(1-b_{t_0}(\phi_{m}))F|^{2}d\lambda_{n}
&=\int_{\{\phi<-t_{0}\}\cap\Omega}|(1-b_{t_0}(\phi_{m}))F|^{2}d\lambda_{n}
\\&\leq\int_{\{\phi<-t_{0}\}\cap\Omega}|F|^{2}d\lambda_{n}
=\int_{\{\phi<-t_{0}\}\cap\Omega}d\lambda_{n}
\\&\leq\mu(\{\phi<-t_{0}\})<\frac{1}{4}\mu(\Omega)
\end{split}
\end{equation}

It follows from
inequalities \ref{equ:20131121d} and \ref{equ:20131027a}
that
\begin{equation}
\label{equ:20131026a}
\begin{split}
&(\int_{\Omega}|F_{v,t_0}-(1-b_{t_0}(\phi_{m}))F|^{2}d\lambda_{n})^{1/2}
\\&\geq(\int_{\Omega}|F_{v,t_0}|^{2}d\lambda_{n})^{1/2}-
(\int_{\Omega}|(1-b_{t_0}(\phi_{m}))F|^{2}d\lambda_{n})^{1/2}
\\&\geq \mu(\Omega)^{1/2}-2^{-1}\mu(\Omega)^{1/2}=2^{-1}\mu(\Omega)^{1/2},
\end{split}
\end{equation}
for any $t_{0}>t_{1}$ and any $m\geq m_{0}$.

It follows from inequalities \ref{equ:20131026b} and \ref{equ:20131026a}
that
$$\int_{\Omega}(\mathbb{I}_{\{-t_{0}-1< t<-t_{0}\}}\circ\phi_{m})|F|^{2}e^{-\phi_{m}}d\lambda_{n}\geq 2^{-2}\mu(\Omega),$$
for any $t_{0}>t_{1}$ and any $m\geq m_{0}$.

Note that
$$\mu({\{-t_{0}-1<\phi_{m}<-t_{0}\}})e^{t_{0}+1}\geq \int_{\Omega}(\mathbb{I}_{\{-t_{0}-1< t<-t_{0}\}}\circ\phi_{m})|F|^{2}e^{-\phi_{m}}d\lambda_{n},$$
for any $t_{0}>t_{1}$ and any $m\geq m_{0}$.
Therefore
$$\mu({\{-t_{0}-1<\phi_{m}<-t_{0}\}})\geq e^{-t_{0}-1}2^{-2}\mu(\Omega),$$
for any $t_{0}>t_{1}$ and any $m\geq m_{0}$.

As $\{\phi_{m}\}_{m=1,2,\cdots}$ is convergent to $\phi$ in Lebesgue measure,
then
there exists large enough positive integer $m_{1}\geq m_{0}$,
such that
$$\mu(\{|\phi_{m_1}-\phi|\geq1\})<\frac{1}{2}e^{-t_{0}-1}2^{-2}\mu(\Omega),$$
for any $t_{0}>t_{1}$.

Note that
$$\{\phi<-t_{0}+1\}\supset(\{-t_{0}-1<\phi_{m_{1}}<-t_{0}\}\setminus\{|\phi_{m_1}-\phi|\geq1\}).$$
Then we have
\begin{equation}
\begin{split}
\mu({\{\phi<-t_{0}+1\}})&\geq\mu(\{-t_{0}-1<\phi_{m_{1}}<-t_{0}\}\setminus\{|\phi_{m_1}-\phi|\geq1\})
\\&\geq \mu(\{-t_{0}-1<\phi_{m_{1}}<-t_{0}\})-\mu(\{|\phi_{m_1}-\phi|\geq1\})
\\&\geq\frac{1}{2}e^{-t_{0}-1}2^{-2}\mu(\Omega),
\end{split}
\end{equation}
for any $t_{0}>t_{1}$,
i.e.
\begin{equation}
\label{equ:20131121e}
e^{t_{0}-1}\mu({\{\phi<-t_{0}+1\}})\geq\frac{1}{2}e^{-2}2^{-2}\mu(\Omega),
\end{equation}
for any $t_{0}>t_{1}$,
which contradicts to equality \ref{equ:20131121a}.

Proposition \ref{t:GZ_open1026} has thus been proved.

\begin{Remark}
Using the same method as in the above proof with more subtle bounds
in inequalities \ref{equ:20131121f} and \ref{equ:20131121g}, one can
obtain inequality \ref{equ:20131121e} with a lower bound
$e^{-2}\mu(\Omega)$.
\end{Remark}

\vspace{.1in} {\em Acknowledgements}. The authors would like to
thank Prof. Bo Berndtsson, Prof. J-P. Demailly, Prof. Nessim Sibony,
and Prof. Yum-Tong Siu for giving series of talks at CAS and
explaining us their related works. The authors would also like to
thank Prof. Bo Berndtsson, Prof. J-P. Demailly, Prof. Laszlo
Lempert, Prof. M. P\u{a}un, Prof. Nessim Sibony, and Prof. Gang Tian
for reading our paper \cite{GZopen-a} and giving comments. Thanks
also to Dr. Junyan Cao for explaining some results of his.

\bibliographystyle{references}
\bibliography{xbib}

\begin{thebibliography}{100}

\bibitem{berndtsson}B. Berndtsson, The extension theorem of Ohsawa-Takegoshi and the theorem of
Donnelly-Fefferman, Ann. L'Inst. Fourier (Grenoble) 46 (1996), no.
4, 1083--1094.
\bibitem{berndtsson13}B. Berndtsson, The openness conjecture for plurisubharmonic functions, arXiv:1305.5781.
\bibitem{BFM08}S. Boucksom, C. Favre, M. Jonsson,
Valuations and plurisubharmonic singularities, Publ. Res. Inst. Math. Sci. 44 (2008), no. 2, 449--494.
\bibitem{Cao12}J.Y. Cao, Numerical dimension and a Kawamata-Viehweg-Nadel type vanishing theorem on compact K\"{a}hler manifolds, arXiv:1210.5692.
\bibitem{demailly-note2000}J-P. Demailly, Multiplier ideal sheaves and analytic methods in algebraic geometry. School on Vanishing Theorems and Effective Results in Algebraic Geometry (Trieste, 2000), 1--148, ICTP Lect. Notes, 6, Abdus Salam Int. Cent. Theoret. Phys., Trieste, 2001.
\bibitem{demailly2010}J.-P. Demailly, Analytic Methods in Algebraic Geometry, Higher Education Press, Beijing, 2010.
\bibitem{demailly-book}J.-P. Demailly, Complex analytic and differential geometry, electronically accessible
at http://www-fourier.ujf-grenoble.fr/~demailly/books.html.
\bibitem{DEL00}J-P. Demailly, L. Ein, and R. Lazarsfeld, A subadditivity property of multiplier
ideals, Michigan Math. J. 48 (2000), 137--156.
\bibitem{D-K01}J-P. Demailly, J. Koll\'{a}r,
Semi-continuity of complex singularity exponents and K\"{a}hler-Einstein metrics on Fano orbifolds.
Ann. Sci. \'{E}cole Norm. Sup. (4) 34 (2001), no. 4, 525--556.
\bibitem{D-P03}J-P. Demailly, T. Peternell, A Kawamata-Viehweg vanishing theorem on compact K\"{a}hler manifolds. J. Differential Geom.  63  (2003),  no. 2, 231--277.
\bibitem{FM05v}C. Favre and M. Jonsson, Valuative analysis of planar plurisubharmonic functions, Invent. Math. 162 (2005), no. 2, 271--311.
\bibitem{FM05j}C. Favre and M. Jonsson, Valuations and multiplier ideals, J. Amer. Math. Soc. 18 (2005), no. 3, 655--684.
\bibitem{guan-zhou12}Q.A. Guan and X.Y. Zhou, Optimal constant problem in the $L^{2}$ extension theorem, C. R. Acad. Sci. Paris. Ser. I. 350 (2012), no. 15--16, 753--756.
\bibitem{guan-zhou13a}Q.A. Guan and X.Y. Zhou, Generalized $L^2$ extension theorem and a conjecture of Ohsawa, C. R. Acad. Sci. Paris. Ser. I. 351 (2013), no. 3--4, 111--114.
\bibitem{guan-zhou13p}Q.A. Guan and X.Y. Zhou, Optimal constant in $L^2$ extension and a proof of a conjecture of
Ohsawa, submitted.
\bibitem{guan-zhou13aa}Q.A. Guan and X.Y. Zhou, An $L^2$ extension theorem with optimal estimate,
C. R. Acad. Sci. Paris. Ser. I. (2014), no. 2, 137--141.
\bibitem{guan-zhou13ap}Q.A. Guan and X.Y. Zhou,  A solution of an $L^{2}$ extension problem with optimal estimate and applications, arXiv:1310.7169.
\bibitem{GZopen-a}Q.A. Guan and X.Y. Zhou, Strong openness conjecture for plurisubharmonic functions, arXiv:1311.3781.
\bibitem{guan-zhou-zhu11}Q.A. Guan, X.Y. Zhou, and L.F. Zhu, On the Ohsawa-Takegoshi $L^2$ extension theorem and the twisted Bochner-Kodaira identity,
C. R. Acad. Sci. Paris. Ser. I. 349 (2011), no. 13--14, 797--800.
\bibitem{Gue12}H. Guenancia, Toric plurisubharmonic functions and analytic adjoint ideal sheaves,
Math. Z. 271 (2012), no. 3--4, 1011--1035.
\bibitem{HKX2012}C. Hacon, J. McKernan and C. Xu, ACC for log canonical thresholds, arXiv:1208.4150v1.
\bibitem{JM12}M. Jonsson and M. Mustat\u{a}, Valuations and asymptotic invariants for sequences of ideals, Annales de l'Institut Fourier A. 2012, vol. 62, no.6, pp. 2145--2209.
\bibitem{JM13}M. Jonsson and M. Mustat\u{a}, An algebraic approach to the openness conjecture of Demailly and Koll\'{a}r,
J. Inst. Math. Jussieu (2013), 1--26.
\bibitem{kim10}D. Kim, The exactness of a general Skoda complex, arXiv:1007.0551.
\bibitem{kisel}C.O. Kiselman, Plurisubharmonic functions and potential theory in several complex vari-
ables. Development of mathematics 1950-2000, 655-714, Birkh\"{A}auser, Basel, 2000.
\bibitem{Ko92}J. Koll\'{a}r (with 14 coauthors): Flips and Abundance for Algebraic Threefolds; Ast\'{e}risque Vol. 211 (1992).
\bibitem{Lazar04I}R. Lazarsfeld, Positivity in algebraic geometry. I. Classical setting: line bundles and linear series. Ergebnisse der Mathematik und ihrer Grenzgebiete. 3. Folge. A Series of Modern Surveys in Mathematics, 48. Springer-Verlag, Berlin, 2004. xviii+387 pp.
\bibitem{Lazar04II}R. Lazarsfeld, Positivity in algebraic geometry. II. Positivity for vector bundles, and multiplier ideals. Ergebnisse der Mathematik und ihrer Grenzgebiete. 3. Folge. A Series of Modern Surveys in Mathematics, 49. Springer-Verlag, Berlin, 2004. xviii+385 pp.
\bibitem{Leh11}B. Lehmann, Algebraic bounds on analytic multiplier ideals, arXiv:1109.4452v3 [math.AG].
\bibitem{LiPhD}Y. Li, Th{\'e}or{\`e}mes d'extension et m{\'e}triques de K{\"a}hler-Einstein G{\'e}n{\'e}ralis{\'e}es, PhD thesis.
\bibitem{Li2013}Y. Li, An Ohsawa-Takegoshi theorem on compact K\"{a}hler manifolds,
to appear in Science China Mathematics,
    doi: 10.1007/s11425-013-4656-3.
\bibitem{Mat2013}S. Matsumura, A Nadel vanishing theorem for metrics with minimal singularities on big line bundles, arXiv:1306.2497.
\bibitem{Mat2013b}S. Matsumura,
An injectivity theorem with multiplier ideal sheaves of singular metrics with transcendental singularities,
 arXiv:1308.2033.
\bibitem{Nadel90}A. Nadel, Multiplier ideal sheaves and K\"{a}hler-Einstein metrics of positive scalar curvature.
Ann. of Math. (2) 132 (1990), no. 3, 549--596.
\bibitem{ohsawa3}T. Ohsawa, On the extension of $L^{2}$ holomorphic functions. III.
Negligible weights. Math. Z. 219 (1995), no. 2, 215--225.
\bibitem{ohsawa5}T. Ohsawa, On the extension of L2 holomorphic functions. V. Effects of generalization.
Nagoya Math. J. 161 (2001), 1--21. Erratum to: "On the extension of L2 holomorphic
functions. V. Effects of generalization" [Nagoya Math. J. 161 (2001), 1--21]. Nagoya Math.
J. 163 (2001), 229.
\bibitem{Sho92}V. Shokurov: 3-fold log flips; Izv. Russ. Acad. Nauk Ser. Mat. Vol. 56 (1992) 105--203.
\bibitem{Si}Nessim Sibony: Quelques probl\`emes de prolongement de courants en
analyse complexe. (French) [Some extension problems for currents in
complex analysis] Duke Math. J. 52 (1985), no. 1, 157--197.
\bibitem{siu74}Y.T. Siu, Analyticity of sets associated to Lelong numbers and the extension of closed positive currents. Invent. Math. 27 (1974), 53--156.
\bibitem{siu96}Y.T. Siu, The Fujita conjecture and the extension theorem of Ohsawa-Takegoshi,
Geometric Complex Analysis, Hayama. World Scientific (1996),
577--592.
\bibitem{siu00}Y.T. Siu, Extension of twisted pluricanonical sections with plurisubharmonic weight and invariance of semipositively twisted plurigenera for manifolds not necessarily of general type. Complex geometry (G\"{o}ttingen, 2000), 223--277, Springer, Berlin, (2002).
\bibitem{siu05}Y.T. Siu, Multiplier ideal sheaves in complex and algebraic geometry. Sci. China Ser. A 48
(2005), suppl., 1--31.
\bibitem{siu09} Y.T. Siu, Dynamic multiplier ideal sheaves and the construction of rational curves in Fano
manifolds. Complex analysis and digital geometry, 323--360, Acta Univ. Upsaliensis Skr.
Uppsala Univ. C Organ. Hist., 86, Uppsala Universitet, Uppsala, 2009.
\bibitem{Straube}E. Straube, Lectures on the $L^2$-Sobolev Theory of the $\bar\partial$-Neumann Problem.
ESI Lectures in Mathematics and Physics. Z¨¹rich: European Mathematical
Society (2010).
\bibitem{guan-zhou-zhu10}L.F. Zhu, Q.A. Guan and X.Y. Zhou,
On the Ohsawa-Takegoshi $L^2$ extension theorem and the Bochner-Kodaira identity with non-smooth twist factor, J. Math. Pures Appl. (9) 97 (2012), no. 6, 579--601.

\end{thebibliography}

\end{document}